\documentclass[a4paper, 11pt]{preprint}


\usepackage[T1]{fontenc} 
\usepackage[utf8]{inputenc} 

\usepackage[english]{babel} 

\usepackage{amsthm, amsmath, amssymb, mathtools} 


\usepackage[osf]{newtxtext} 

\usepackage{mathrsfs} 

\usepackage{dsfont} 

\renewcommand{\hat}{\widehat} 

\usepackage{microtype} 


\usepackage{mhequ} 
\usepackage{comment} 

\usepackage{tikz}
\usetikzlibrary{decorations.markings}
\usetikzlibrary{arrows.meta}
\usetikzlibrary{decorations.pathmorphing}
\tikzset{snake/.style={coil,aspect=0}}
\usetikzlibrary{shapes.misc}


\usepackage[colorlinks=true, linkcolor=colorLink, citecolor=colorCite, urlcolor=colorLink, linktocpage=true]{hyperref}


\usepackage{orcidlink} 


\usepackage{enumitem}




\setlength{\marginparsep}{2mm} 
\setlength{\marginparwidth}{3.2cm} 




\newcommand{\R}{\mathbb{R}} 
\newcommand{\N}{\mathbb{N}} 
\newcommand{\T}{\mathbb{T}} 
\newcommand{\E}{\mathbb{E}} 
\newcommand{\PP}{\mathbb{P}} 



\newcommand{\eps}{\varepsilon}
\newcommand{\D}{\mathrm{D}} 
\newcommand{\diff}[1]{\operatorname{Diff}^{#1}(\mathbb{T}^{d})}
\newcommand{\diffvol}[1]{\operatorname{Diff}_{\textnormal{Vol}}^{#1}(\mathbb{T}^{d})}
\newcommand{\homeo}{\operatorname{Homeo}(\mathbb{T}^{d})}
\newcommand{\flow}{\varphi}
\newcommand{\RR}{\mathbb{R}}
\newcommand{\TT}{\mathbb{T}}
\newcommand{\NN}{\mathbb{N}}
\newcommand{\ZZ}{\mathbb{Z}}
\newcommand{\EE}{\mathbb{E}}
\newcommand{\defeq}{\coloneqq}

\newcommand{\embed}{\hookrightarrow}
\newcommand{\dd}{\mathrm{d}\mkern0.5mu}
\newcommand{\supp}{\operatorname{supp}}
\newcommand{\cl}{\operatorname{cl}}
\newcommand{\vdiv}{\nabla\cdot}
\newcommand{\euler}{\mathrm{e}\mkern0.7mu}
\newcommand{\upi}{\mathrm{i}\mkern0.7mu}
\newcommand{\uppi}{\pi}
\newcommand{\place}{\,\cdot\,}
\newcommand{\ip}[1]{\langle#1\rangle}
\newcommand{\abs}[1]{\lvert#1\rvert}
\newcommand{\norm}[1]{\lVert#1\rVert}
\newcommand{\from}{\colon}
\newcommand{\meas}{\mu}

\newcommand{\str}{K}
\newcommand{\rds}{P}

\newcommand{\tzero}{|_{t=0}}
\newcommand{\cm}{H_{\textnormal{CM}}}
\newcommand{\trace}{\operatorname{tr}}
\newcommand{\trans}{\mathrm{T}}
\renewcommand{\complement}{\textnormal{c}}

\newcommand{\colour}{\theta} 
\newcommand{\shift}{\vartheta}
\newcommand{\diam}{\operatorname{diam}}
\newcommand{\law}{\operatorname{Law}}
\newcommand{\unif}{\boldsymbol{1}}
\newcommand{\reg}{\beta} 
\newcommand{\lin}{f} 
\newcommand{\scale}{\alpha} 
\newcommand{\diffu}{\kappa} 
\newcommand{\crit}{\textnormal{c}} 

\newcommand{\multiquad}[1][1]{\hspace*{#1em}\ignorespaces}


\newcommand{\init}[1]{
	\ifnumequal{#1}{0}{\bar{L}^{2}(\TT^{d})}{}
	\ifnumequal{#1}{1}{\bar{H}^{1}(\TT^{d})}{}
}%


\definecolor{colorLink}{RGB}{0,100,162} 
\definecolor{colorCite}{RGB}{8,124,100} 

\theoremstyle{plain}
\newtheorem{theorem}{Theorem}[section]
\newtheorem{corollary}[theorem]{Corollary}
\newtheorem{lemma}[theorem]{Lemma}
\newtheorem{proposition}[theorem]{Proposition}

\theoremstyle{definition}
\newtheorem{definition}[theorem]{Definition}

\newtheorem{remark}[theorem]{Remark}

\numberwithin{equation}{section}




\colorlet{darkblue}{blue!90!black}
\colorlet{darkgreen}{green!50!black}




\begin{document}

\title{Ergodicity for SPDEs driven by divergence-free transport noise}

\author{
	Benjamin Gess$^{1}$, 
	Rishabh S.\ Gvalani$^{2}$\orcidlink{0000-0003-2078-3383}, 
	Adrian Martini$^{3}$\orcidlink{0000-0001-9350-1338}
	}

\institute{
	Technische Universit\"{a}t Berlin \& Max-Planck-Institut f\"{u}r Mathematik in den Naturwissenschaften,
	Email: \href{mailto:benjamin.gess@tu-berlin.de}{\color{black}\texttt{benjamin.gess@tu-berlin.de}} 
	\and 
	D-MATH, ETH Z\"{u}rich, Email: \href{mailto:rgvalani@ethz.ch}{\color{black} \texttt{rgvalani@ethz.ch}}
	\and 
	Technische Universit\"{a}t Berlin, Email: \href{mailto:adrian.martini@tu-berlin.de}{\color{black} \texttt{adrian.martini@tu-berlin.de}}
	}
 
\maketitle

\begin{abstract}
	We study the ergodic behaviour of the McKean--Vlasov equations driven by common, divergence-free transport noise. In particular, we show that in dimension $d\geq 2$, if the noise is mixing and sufficiently strong it can enforce the uniqueness of invariant probability measures, even if the deterministic part of equation has multiple steady states.

	\vspace{1em}

	\noindent{{\it MSC2020}: Primary: 37A25; Secondary: 37H15, 35R60, 60H15, 76F25} 

 	\noindent {{\it Keywords}: Ergodicity, mixing, random dynamics, transport noise, McKean--Vlasov equations}
\end{abstract}

\setcounter{tocdepth}{2}
\tableofcontents


\section{Introduction}
\label{sec:intro}
We are interested in the long-time behaviour of solutions to the following class of SPDEs:
\begin{equation}\label{eq:SPDE_intro}
	\partial_{t}\rho=\diffu\Delta\rho+\vdiv(\rho(\nabla W \ast \rho))+\sqrt{2}\str\vdiv(\rho\circ\xi)\,,
\end{equation}
posed on the $d\geq2$-dimensional unit torus $\mathbb{T}^d$, where $\diffu>0$ is the \emph{diffusivity}, $W\from\mathbb{T}^d \to \mathbb{R}$ is a sufficiently regular \emph{interaction potential}, $\str>0$ denotes the \emph{noise intensity}, and~$\xi$ is a divergence-free Gaussian noise which is white-in-time and coloured in space. The equation is to be interpreted in the Stratonovich sense. The noise~$\xi$ can be formally represented as
\begin{equation*}
	\xi(t,x) = \sum_{k \in \N }\sigma_k(x) \dot{B}_t^k \, ,		
\end{equation*}
where $( B^k )_{k \in \N}$ is a family of independent standard Brownian motions, 
and $( \sigma_k )_{k \in \N}$ is a family of smooth, divergence-free vector fields on $\mathbb{T}^d$ that form an orthonormal basis of the Cameron--Martin space associated to $\xi$. 

We study this equation for initial data $\rho_0 \in \init{0}$, where
\begin{equation*}
	\init{0}\coloneqq \left\{\rho_0 \in L^{2}(\T^d): \rho_0 \geq 0~\text{a.e. and } \int_{\T^d}\rho_0  \,\dd x=1\right\} \, .
\end{equation*}
The above SPDE can be derived as the mean-field limit of weakly interacting diffusions subject to common noise. More precisely, consider the system of SDEs for $i=1, \ldots, N$ given by 
\begin{equation*}
	\dd X_t^i = - \frac{1}{N} \sum_{\substack{j=1\\j\neq i}}^N \nabla W (X_t^i - X_t^j)  \,\dd t + \,\sqrt{2\diffu}\dd \tilde{B}_t^i +\sqrt{2} \str \sum_{k \in \N} \sigma_k(X_t^i) \circ \,\dd B_t^k \, , 
\end{equation*}
where $( \tilde{B}^i )_{i=1}^N$ is a family of independent standard Brownian motions, independent of $( B^k )_{k \in \N}$. Under suitable assumptions on $W$ and $( \sigma_k )_{k \in \N}$, it can be shown that as $N \to \infty$, the empirical measure
\begin{equation*}
	\mu_t^N := \frac{1}{N} \sum_{i=1}^N \delta_{X_t^i} \, ,
\end{equation*}
converges (in an appropriate sense) to the solution of the SPDE~\eqref{eq:SPDE_intro}, see for example~\cite{coghi_flandoli_16,coghi_gess_19,nikolaev_25}.
\paragraph{The McKean--Vlasov equation and phase transitions.}
When $K=0$, the SPDE~\eqref{eq:SPDE_intro} reduces to the following deterministic PDE:
\begin{equation}\label{eq:McKean--Vlasov}
	\partial_{t}\rho=\diffu\Delta\rho+\vdiv(\rho(\nabla W \ast \rho))\,,
\end{equation}
which is commonly referred to as the McKean--Vlasov equation. The above equation has received a lot of attention in the last two decades (see~\cite{carrillo_mccann_villani_03,chayes_panferov_10,jabin_wang_18,carrillo_gvalani_pavliotis_schlichting_20}) and its various features have been well studied: well-posedness, regularity of solutions, long-time behaviour, derivation from the underlying particle system, etc. The equation~\eqref{eq:McKean--Vlasov} is especially interesting because it is one of the canonical examples of gradient flows in the space of probability measures with respect the $2$-Wasserstein metric. The free-energy functional associated to the gradient flow structure can be expressed in the following form:
\begin{equation}\label{eq:free_energy_functional}
	E[\rho]\defeq \diffu \int_{\T^d} \rho(x) \log \rho(x) \,\dd x + \frac{1}{2} \int_{\T^d} \int_{\T^d} W(x-y) \rho(x) \rho(y) \,\dd x \,\dd y \, ,
\end{equation}
where we have slightly abused notation by identifying $\rho$ with its Lebesgue density. Since~\eqref{eq:McKean--Vlasov} is a gradient flow, its steady states are in one-to-one correspondence with the critical points of $E$ (see~\cite[Prop.~2.4]{carrillo_gvalani_pavliotis_schlichting_20}). Furthermore, one can readily check that the constant function $\unif \equiv 1 \in \init{0}$ is always a steady state of~\eqref{eq:McKean--Vlasov} and thus a critical point of $E$. However, it is well-known since the work of Chayes and Panferov~\cite[Prop.~2.9]{chayes_panferov_10} that the energy~$E$ and thus the McKean--Vlasov equation can exhibit the phenomenon of phase transitions. In particular, depending on the choice of interaction potential~$W$, there may exist some $\diffu_{\crit}>0$, such that for $\diffu>\diffu_{\crit}$, $\unif$ is the unique minimiser of~$E$, while for $\diffu<\diffu_{\crit}$, the minimisers of~$E$ are distinct from~$\unif$. 
This in turn implies that for $\diffu<\diffu_{\crit}$, the McKean--Vlasov equation~\eqref{eq:McKean--Vlasov} possesses multiple steady states. Indeed, one can derive a sharp criterion for the existence of different steady states:
\begin{proposition}[{\cite[Prop.~2.9]{chayes_panferov_10}}]\label{prop:phase_transition}
	Assume $W\in C^2(\T^d)$ is even. Then, the following two statements are equivalent:
	\begin{enumerate}
		\item There exists $k \in \ZZ^d\setminus\{0\}$ such that $\hat{W}(k)<0$.
		\item There exists $\diffu_{\crit}>0$ such that for all $\diffu<\diffu_{\crit}$, \eqref{eq:McKean--Vlasov} has a steady state which is not equal to~$\unif$. 
	\end{enumerate}
\end{proposition}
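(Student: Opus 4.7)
The plan is to characterise the steady states of~\eqref{eq:McKean--Vlasov} as critical points of the free energy $E$ and to read off, from the Hessian of $E$ at the constant state $\unif$, a sharp Fourier condition for loss of stability; the existence of a non-trivial steady state will then follow from the direct method of the calculus of variations, while the converse will follow from strict convexity of $E$. As a preparatory step, any positive steady state $\rho$ of~\eqref{eq:McKean--Vlasov} satisfies $\vdiv(\rho\nabla(\diffu\log\rho+W\ast\rho))=0$; multiplying by $\diffu\log\rho+W\ast\rho$ and integrating by parts yields the self-consistency relation
\begin{equation*}
	\rho=\frac{\euler^{-W\ast\rho/\diffu}}{Z(\rho)}\,,\qquad Z(\rho)\defeq\int_{\T^{d}}\euler^{-W\ast\rho/\diffu}\,\dd x\,,
\end{equation*}
which forces $\rho>0$ everywhere (with smoothness then following by bootstrap) and is precisely the Euler--Lagrange equation for $E$ on $\init{0}$. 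Using Parseval and the evenness of $W$, the second variation of $E$ at $\unif$ along any mean-zero $h\in L^{2}(\T^{d})$ is
\begin{equation*}
	\D^{2}E(\unif)[h,h]=\diffu\int_{\T^{d}}h^{2}\,\dd x+\int_{\T^{d}}\!\int_{\T^{d}}W(x-y)h(x)h(y)\,\dd x\,\dd y=\sum_{k\in\Z^{d}\setminus\{0\}}\bigl(\diffu+\hat{W}(k)\bigr)\abs{\hat{h}(k)}^{2}\,.
\end{equation*}

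For the implication (1)\,$\Rightarrow$\,(2), I would set $\diffu_{\crit}\defeq\max_{k\in\Z^{d}\setminus\{0\}}(-\hat{W}(k))$, which is positive by~(1) and finite because $W\in C^{2}$ forces $\hat{W}(k)\to 0$ as $\abs{k}\to\infty$. For any $\diffu<\diffu_{\crit}$, choosing $k^{*}$ attaining this maximum and testing with $h(x)=\cos(2\uppi k^{*}\cdot x)$ gives $\D^{2}E(\unif)[h,h]<0$, so $\unif$ is not a local minimiser of $E$. The direct method of the calculus of variations then produces a minimiser $\rho_{*}\neq\unif$: the functional $E$ is bounded below, weakly lower semicontinuous on $\init{0}$, and has tight sublevel sets (uniform integrability of a minimising sequence following from a uniform entropy bound via Dunford--Pettis), so the infimum is attained; $\rho_{*}$ is then a critical point of $E$ and hence the desired non-trivial steady state.

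For the converse I would argue the contrapositive: assume $\hat{W}(k)\geq 0$ for all $k\neq 0$. The same Parseval computation shows that the interaction part of $E$ is a quadratic functional whose Hessian is non-negative on mean-preserving perturbations, hence convex, while the entropy part is strictly convex; thus $E$ is strictly convex on the convex set $\init{0}$. Restricting $E$ to the segment joining any two critical points then forces them to coincide, so $\unif$ is the unique critical point of $E$, and hence the unique steady state of~\eqref{eq:McKean--Vlasov}, for every $\diffu>0$. This rules out the existence of a $\diffu_{\crit}$ as in~(2).

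The main technical obstacle is verifying that $E$ indeed attains its infimum on $\init{0}$: this requires lower semicontinuity of the relative entropy along a minimising sequence (handled via the Donsker--Varadhan variational representation), uniform integrability from a uniform entropy bound, and continuity of the interaction term along weakly converging sequences (which follows readily from $W\in C^{2}(\T^{d})$). A secondary technicality is justifying the Euler--Lagrange computation at an arbitrary critical point of the constrained problem; the closed form $\rho=\euler^{-W\ast\rho/\diffu}/Z(\rho)$ is convenient here, as it automatically propagates positivity and smoothness of any steady state.
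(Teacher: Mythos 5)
The paper does not prove Proposition~\ref{prop:phase_transition}; it cites it verbatim from Chayes--Panferov, so there is no internal argument to compare yours against. Your proof is the standard free-energy route and the logic is sound: steady states of~\eqref{eq:McKean--Vlasov} are exactly critical points of~$E$ via the fixed-point (Kirkwood--Monroe) equation $\rho=\euler^{-W\ast\rho/\diffu}/Z(\rho)$; the Hessian at~$\unif$ diagonalises to $\sum_{k\neq0}(\diffu+\hat{W}(k))\abs{\hat{h}(k)}^{2}$; if some $\hat{W}(k^{*})<0$ then for $\diffu<\diffu_{\crit}\defeq\max_{k}(-\hat{W}(k))$ a small cosine perturbation strictly lowers~$E$, so the global minimiser delivered by the direct method cannot be~$\unif$; and conversely, if $\hat{W}(k)\geq0$ for all $k\neq0$, then on the affine slice $\{\hat{\rho}(0)=1\}$ the interaction is a convex quadratic and the entropy is strictly convex, so the segment argument forbids a second critical point for any~$\diffu$. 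This is essentially the argument in Chayes--Panferov (and in Carrillo--Gvalani--Pavliotis--Schlichting for this class of potentials). The points you flag as needing care are exactly the ones that do: Dunford--Pettis compactness from the entropy bound, weak lower semicontinuity of the entropy and weak continuity of the interaction for continuous~$W$, and the Euler--Lagrange computation at the constrained minimiser. One small thing worth making explicit in the converse: strict convexity of~$E$ along the segment joining two steady states requires that the map $t\mapsto E((1-t)\rho_{1}+t\rho_{2})$ be finite and differentiable on $[0,1]$; this is automatic here because the fixed-point form of the steady-state equation forces any steady state to be bounded and bounded away from zero, so the segment stays in the interior of the domain of the entropy.
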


Returning to our SPDE~\eqref{eq:SPDE_intro}, one can see that the uniform state~$\unif$ is also invariant for the stochastic dynamics. 
This implies that the measure $\delta_{\unif} \in \mathcal{P}(\init{0})$ is invariant for the associated Markov process. The presence of multiple steady states for~\eqref{eq:McKean--Vlasov} then poses an interesting question for the associated stochastic dynamics:

\emph{Does the presence of the noise enforce uniqueness of invariant probability measures for~\eqref{eq:SPDE_intro} even in regimes where the deterministic dynamics possesses multiple steady states?}

In this work, we answer this question in the affirmative for a large class of mixing noises and sufficiently large noise intensities $\str$.
\paragraph{A finite-dimensional example.}
To illustrate why such a uniqueness result may be expected, we consider the following finite-dimensional example discussed by Arnold, Crauel and Wihstutz (see~\cite{arnold_crauel_wihstutz_83}). Let $X^{x}$ be the solution to the following SDE on $\R^2$:
\begin{equation}\label{eq:finite_dimensional_example}
	\dd X_t^x= A \cdot X_t^x \,\dd t + \sqrt{2}\str R \cdot X_t^x \circ \dd B_t\, , \qquad X_0^x = x \in \R^2 \, ,
\end{equation}
where $A$ is a $2\times 2$ diagonal matrix with a positive top eigenvalue,~$R$ is the skew-symmetric matrix given by
\begin{equation*}
	R
	=
	\begin{pmatrix}
			0 & 1 \\
			-1 & 0
	\end{pmatrix}\, ,
\end{equation*}
and $B$ is a one-dimensional standard Brownian motion. 
It is clear that if $K=0$ the above system is unstable. However, in the presence of noise, stability can be restored. Indeed, the authors of~\cite{arnold_crauel_wihstutz_83} showed that the top Lyapunov exponent of the above system defined by
\begin{equation*}
	\lambda_{\textnormal{top}}^{(\str)} \defeq \lim_{t \to \infty} \frac{1}{t} \log \sup_{\abs{x}\leq 1}\abs{X_{t}^{x}} 
\end{equation*}
admits a characterisation via a Furstenberg--Khasminskii formula
\begin{equation*}
	\lambda_{\textnormal{top}}^{(\str)} = \int_{\mathbb{S}^1} \ip{A s, s} \, \mu_{\str}(\dd s) \, ,
\end{equation*}
where $\mu_{\str}$ is the unique invariant probability measure of the induced dynamics on the unit~circle~$\mathbb{S}^1$ via $s_t\defeq X_{t}^{x}/\abs{X_{t}^{x}}$. They also showed that as $\str \to \infty$, $\mu_{\str}$ converges weakly to the normalised Lebesgue measure on $\mathbb{S}^1$ which in turn implies that
\begin{equation*}
	\lim_{\str \to \infty} \lambda_{\textnormal{top}}^{(\str)} = \frac{1}{2} \trace(A) \,.
\end{equation*}
It is now possible to choose $A$ such that $\trace(A)<0$ even though the top eigenvalue of $A$ is positive. In this case, for sufficiently large noise intensities~$\str$, the top Lyapunov exponent $\lambda_{\textnormal{top}}^{(\str)}$ becomes negative which implies that the system is stable.

As an example of such a matrix consider
\begin{equation*}
	A = \begin{pmatrix}
		1 & 0 \\
		0 & -2
	\end{pmatrix} \, ,
\end{equation*}
which has top eigenvalue $1$ but $\trace(A) = -1<0$. The heuristic explanation for this phenomenon is as follows: due the strong rotation effect induced by the skew-symmetric noise when $\str$ is large, the angular process $s_t$ explores the unit circle very fast and samples the directions of compression (stability) and expansion (instability) with equal probability. Thus, on average, the system experiences a net effect given by the trace of $A$ which is negative in this case. We refer to the schematic in Figure~\ref{fig:finite_dimensional_example}
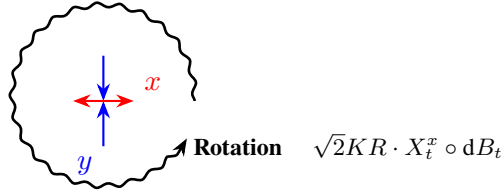
\begin{figure}
	\centering
	\begin{tikzpicture}[scale=0.4,>=Stealth,baseline=(current bounding box.center)]
		\draw[->,red,thick] (0,0) -- (1,0) node[above right] {$x$};
		\draw[->,red,thick] (0,0) -- (-1,0) node[above right] {};

		\draw[->,blue,thick] (0,-1.5)  node[below left] {$y$} -- (0,0);
		\draw[->,blue,thick] (0,1.5) -- (0,0) node[below left]{};

		\draw[black,line width=1.0pt,decorate,
		decoration={snake,amplitude=0.3mm,segment length=3mm}] (0,0) ++ (3,0) arc[start angle=0,end angle=330,radius=3] node[right] {\footnotesize\bf Rotation \quad $\sqrt{2}\str R\cdot X_t^x\circ \dd B_t$};
		\draw[-{Stealth},black,thick] (0,0) ++ ({3*cos(330)},{3*sin(330)}) 
      	-- ++({0.3*cos(60)},{0.3*sin(60)});
      \end{tikzpicture}
	  \caption{An illustration of the stabilisation effect induced by the skew-symmetric noise for the SDE~\eqref{eq:finite_dimensional_example}. The red outward-facing arrows represent the weaker, unstable directions while the blue inward-facing arrows represent the stronger, stable directions. The black circular arrow indicates the rapid rotation induced by the noise.}
	  \label{fig:finite_dimensional_example}
\end{figure}
for an illustration of this phenomenon.
\paragraph{Infinite-dimensional case.}
One can now ask  whether a similar stabilisation effect occurs in the infinite-dimensional setting for the SPDE~\eqref{eq:SPDE_intro}. We start by linearising~\eqref{eq:SPDE_intro} around the uniform state $\unif$ which leads to the following linear SPDE:
\begin{equation}
	\partial_{t}\lin=\diffu \Delta \lin + \Delta W* \lin +\sqrt{2}\str\vdiv(\lin \circ \xi) \, , \quad \lin_{0} \in L^{2}_{0}(\T^d)\, ,
	\label{eq:linearised_SPDE}
\end{equation}
where $L^{2}_{0}(\T^d)$ is the space of mean-free square-integrable functions on $\T^d$. Note that the operator $L\lin\defeq \diffu \Delta \lin + \Delta W * \lin$ can be diagonalised using Fourier modes and its spectrum is given by
\begin{equation*}
	\operatorname{spec}(L) = \{-\abs{2\uppi k}^2 (\diffu + \hat{W}(k)) : k \in \ZZ^d_0 \} \, .	
\end{equation*}
One can see that if there exists $k \in \ZZ^d\setminus\{0\}$ such that $\hat{W}(k)<0$, then for~$\diffu$ sufficiently small, $L$ possesses positive eigenvalues, which implies that the linearised dynamics in this regime is unstable in the absence of noise. This is also in complete agreement with the multiplicity result discussed in the previous section (Proposition~\ref{prop:phase_transition}). Note however that if we assume that $W$ is at least integrable, 
then $\hat{W}(k) \to 0$ as $\abs{k} \to \infty$ by the Riemann--Lebesgue lemma. This implies that the instability can only occur in finitely many Fourier modes. We refer the reader to the schematic in Figure~\ref{fig:infinite_dimensional_example} for an illustration of what the spectrum of~$L$ typically looks like in the unstable regime. Comparing this new setting with the finite-dimensional example, we now have finitely many directions of expansion and infinitely many directions of compression. We would now like to find a choice of noise that allows us to produce a similar stabilisation effect as in the finite-dimensional example.
\begin{figure}[h]
	\centering
	\begin{tikzpicture}[>=Stealth]
		\def\xscale{0.65}    
		\def\yscale{0.01}    
		\def\kmax{7}         
		\def\tick{0.10}      

		\draw[->,thick] (-0.6*\xscale,0) -- ({(\kmax+0.9)*\xscale},0);

		\foreach \k in {0,...,\kmax}{
		\draw[thick] ({\k*\xscale},-\tick) -- ({\k*\xscale},\tick);
		\node[below] at ({\k*\xscale},-1.8*\tick) {\scriptsize $\k$};
		}
		\node[below] at ({0.5*\kmax*\xscale},-4.0*\tick) {$|k|$};

		\pgfmathsetmacro{\Rx}{3.5*\xscale}
		\draw[thick,red] (\Rx,-1.3*\tick) -- (\Rx,1.3*\tick);
		\node[above,red] at (\Rx,1.6*\tick) {$R$};

		\draw[gray!45, line width=0.8pt]
		plot coordinates {
			(0.1*\xscale,{(-0.1*(2*pi)^2*0.1^2*(1 - 10*0.1^(-2)))*\yscale})
			(0.2*\xscale,{(-0.1*(2*pi)^2*0.2^2*(1 - 10*0.2^(-2)))*\yscale})
			(0.3*\xscale,{(-0.1*(2*pi)^2*0.3^2*(1 - 10*0.3^(-2)))*\yscale})
			(0.4*\xscale,{(-0.1*(2*pi)^2*0.4^2*(1 - 10*0.4^(-2)))*\yscale})
			(0.5*\xscale,{(-0.1*(2*pi)^2*0.5^2*(1 - 10*0.5^(-2)))*\yscale})
			(0.6*\xscale,{(-0.1*(2*pi)^2*0.6^2*(1 - 10*0.6^(-2)))*\yscale})
			(0.7*\xscale,{(-0.1*(2*pi)^2*0.7^2*(1 - 10*0.7^(-2)))*\yscale})
			(0.8*\xscale,{(-0.1*(2*pi)^2*0.8^2*(1 - 10*0.8^(-2)))*\yscale})
			(0.9*\xscale,{(-0.1*(2*pi)^2*0.9^2*(1 - 10*0.9^(-2)))*\yscale})
			(1*\xscale,{(-0.1*(2*pi)^2*1^2*(1 - 10*1^(-2)))*\yscale})
			(1.1*\xscale,{(-0.1*(2*pi)^2*1.1^2*(1 - 10*1.1^(-2)))*\yscale})
			(1.2*\xscale,{(-0.1*(2*pi)^2*1.2^2*(1 - 10*1.2^(-2)))*\yscale})
			(1.3*\xscale,{(-0.1*(2*pi)^2*1.3^2*(1 - 10*1.3^(-2)))*\yscale})
			(1.4*\xscale,{(-0.1*(2*pi)^2*1.4^2*(1 - 10*1.4^(-2)))*\yscale})
			(1.5*\xscale,{(-0.1*(2*pi)^2*1.5^2*(1 - 10*1.5^(-2)))*\yscale})
			(1.6*\xscale,{(-0.1*(2*pi)^2*1.6^2*(1 - 10*1.6^(-2)))*\yscale})
			(1.7*\xscale,{(-0.1*(2*pi)^2*1.7^2*(1 - 10*1.7^(-2)))*\yscale})
			(1.8*\xscale,{(-0.1*(2*pi)^2*1.8^2*(1 - 10*1.8^(-2)))*\yscale})
			(1.9*\xscale,{(-0.1*(2*pi)^2*1.9^2*(1 - 10*1.9^(-2)))*\yscale})
			(2*\xscale,{(-0.1*(2*pi)^2*2^2*(1 - 10*2^(-2)))*\yscale})
			(2.1*\xscale,{(-0.1*(2*pi)^2*2.1^2*(1 - 10*2.1^(-2)))*\yscale})
			(2.2*\xscale,{(-0.1*(2*pi)^2*2.2^2*(1 - 10*2.2^(-2)))*\yscale})
			(2.3*\xscale,{(-0.1*(2*pi)^2*2.3^2*(1 - 10*2.3^(-2)))*\yscale})
			(2.4*\xscale,{(-0.1*(2*pi)^2*2.4^2*(1 - 10*2.4^(-2)))*\yscale})
			(2.5*\xscale,{(-0.1*(2*pi)^2*2.5^2*(1 - 10*2.5^(-2)))*\yscale})
			(2.6*\xscale,{(-0.1*(2*pi)^2*2.6^2*(1 - 10*2.6^(-2)))*\yscale})
			(2.7*\xscale,{(-0.1*(2*pi)^2*2.7^2*(1 - 10*2.7^(-2)))*\yscale})
			(2.8*\xscale,{(-0.1*(2*pi)^2*2.8^2*(1 - 10*2.8^(-2)))*\yscale})
			(2.9*\xscale,{(-0.1*(2*pi)^2*2.9^2*(1 - 10*2.9^(-2)))*\yscale})
			(3*\xscale,{(-0.1*(2*pi)^2*3^2*(1 - 10*3^(-2)))*\yscale})
			(3.1*\xscale,{(-0.1*(2*pi)^2*3.1^2*(1 - 10*3.1^(-2)))*\yscale})
			(3.2*\xscale,{(-0.1*(2*pi)^2*3.2^2*(1 - 10*3.2^(-2)))*\yscale})
			(3.3*\xscale,{(-0.1*(2*pi)^2*3.3^2*(1 - 10*3.3^(-2)))*\yscale})
			(3.4*\xscale,{(-0.1*(2*pi)^2*3.4^2*(1 - 10*3.4^(-2)))*\yscale})
			(3.5*\xscale,{(-0.1*(2*pi)^2*3.5^2*(1 - 10*3.5^(-2)))*\yscale})
			(3.6*\xscale,{(-0.1*(2*pi)^2*3.6^2*(1 - 10*3.6^(-2)))*\yscale})
			(3.7*\xscale,{(-0.1*(2*pi)^2*3.7^2*(1 - 10*3.7^(-2)))*\yscale})
			(3.8*\xscale,{(-0.1*(2*pi)^2*3.8^2*(1 - 10*3.8^(-2)))*\yscale})
			(3.9*\xscale,{(-0.1*(2*pi)^2*3.9^2*(1 - 10*3.9^(-2)))*\yscale})
			(4*\xscale,{(-0.1*(2*pi)^2*4^2*(1 - 10*4^(-2)))*\yscale})
			(4.1*\xscale,{(-0.1*(2*pi)^2*4.1^2*(1 - 10*4.1^(-2)))*\yscale})
			(4.2*\xscale,{(-0.1*(2*pi)^2*4.2^2*(1 - 10*4.2^(-2)))*\yscale})
			(4.3*\xscale,{(-0.1*(2*pi)^2*4.3^2*(1 - 10*4.3^(-2)))*\yscale})
			(4.4*\xscale,{(-0.1*(2*pi)^2*4.4^2*(1 - 10*4.4^(-2)))*\yscale})
			(4.5*\xscale,{(-0.1*(2*pi)^2*4.5^2*(1 - 10*4.5^(-2)))*\yscale})
			(4.6*\xscale,{(-0.1*(2*pi)^2*4.6^2*(1 - 10*4.6^(-2)))*\yscale})
			(4.7*\xscale,{(-0.1*(2*pi)^2*4.7^2*(1 - 10*4.7^(-2)))*\yscale})
			(4.8*\xscale,{(-0.1*(2*pi)^2*4.8^2*(1 - 10*4.8^(-2)))*\yscale})
			(4.9*\xscale,{(-0.1*(2*pi)^2*4.9^2*(1 - 10*4.9^(-2)))*\yscale})
			(5*\xscale,{(-0.1*(2*pi)^2*5^2*(1 - 10*5^(-2)))*\yscale})
			(5.1*\xscale,{(-0.1*(2*pi)^2*5.1^2*(1 - 10*5.1^(-2)))*\yscale})
			(5.2*\xscale,{(-0.1*(2*pi)^2*5.2^2*(1 - 10*5.2^(-2)))*\yscale})
			(5.3*\xscale,{(-0.1*(2*pi)^2*5.3^2*(1 - 10*5.3^(-2)))*\yscale})
			(5.4*\xscale,{(-0.1*(2*pi)^2*5.4^2*(1 - 10*5.4^(-2)))*\yscale})
			(5.5*\xscale,{(-0.1*(2*pi)^2*5.5^2*(1 - 10*5.5^(-2)))*\yscale})
			(5.6*\xscale,{(-0.1*(2*pi)^2*5.6^2*(1 - 10*5.6^(-2)))*\yscale})
			(5.7*\xscale,{(-0.1*(2*pi)^2*5.7^2*(1 - 10*5.7^(-2)))*\yscale})
			(5.8*\xscale,{(-0.1*(2*pi)^2*5.8^2*(1 - 10*5.8^(-2)))*\yscale})
			(5.9*\xscale,{(-0.1*(2*pi)^2*5.9^2*(1 - 10*5.9^(-2)))*\yscale})
			(6*\xscale,{(-0.1*(2*pi)^2*6^2*(1 - 10*6^(-2)))*\yscale})
			(6.1*\xscale,{(-0.1*(2*pi)^2*6.1^2*(1 - 10*6.1^(-2)))*\yscale})
			(6.2*\xscale,{(-0.1*(2*pi)^2*6.2^2*(1 - 10*6.2^(-2)))*\yscale})
			(6.3*\xscale,{(-0.1*(2*pi)^2*6.3^2*(1 - 10*6.3^(-2)))*\yscale})
			(6.4*\xscale,{(-0.1*(2*pi)^2*6.4^2*(1 - 10*6.4^(-2)))*\yscale})
			(6.5*\xscale,{(-0.1*(2*pi)^2*6.5^2*(1 - 10*6.5^(-2)))*\yscale})
			(6.6*\xscale,{(-0.1*(2*pi)^2*6.6^2*(1 - 10*6.6^(-2)))*\yscale})
			(6.7*\xscale,{(-0.1*(2*pi)^2*6.7^2*(1 - 10*6.7^(-2)))*\yscale})
			(6.8*\xscale,{(-0.1*(2*pi)^2*6.8^2*(1 - 10*6.8^(-2)))*\yscale})
			(6.9*\xscale,{(-0.1*(2*pi)^2*6.9^2*(1 - 10*6.9^(-2)))*\yscale})
			(7*\xscale,{(-0.1*(2*pi)^2*7^2*(1 - 10*7^(-2)))*\yscale})
		};

		\fill[black] (0,0) circle (1.8pt);

		\foreach \k in {1,...,\kmax}{
		\pgfmathsetmacro{\y}{-0.1*(2*pi)^2*\k^2*(1 - 10*\k^(-2))}
		\ifdim \y pt > 0pt
		\draw[red, thick] ({\k*\xscale},{\y*\yscale}) 
		-- ++(-0.08,-0.08) -- ++(0.16,0.16)
		({\k*\xscale},{\y*\yscale}) -- ++(-0.08,0.08) -- ++(0.16,-0.16);
			
		\else
			\fill[blue]  ({\k*\xscale},{\y*\yscale}) circle (1.6pt);
		\fi
		}

		\draw[black, ->, line width=1.0pt, decorate, decoration={snake,amplitude=0.3mm,segment length=2mm}] ({0.3*\xscale},1.15) -- ({(6)*\xscale},1.15);

		\node[black, anchor=south west] at ({0.35*\xscale},1.18)
		{\footnotesize \textbf{Mixing}\quad $\sqrt{2}\str\vdiv(\lin\circ\xi)$};
	\end{tikzpicture}
	\caption{The typical spectrum of the operator $L$  in the unstable regime $\diffu\ll1$. The red crosses represent unstable modes while the blue dots represent stable modes. The red vertical line at $|k|=R$ denotes the threshold after which all modes are stable. The black arrow indicates the effect of the mixing noise $\xi$ which moves the solution to higher frequencies as time progresses.}
	\label{fig:infinite_dimensional_example}
\end{figure}
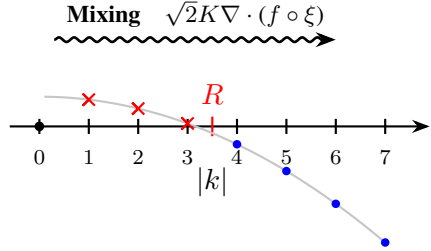
\paragraph{Passive scalar mixing.}
A natural choice of noise to make this stabilisation effect happen is to consider noises that are known to be \emph{mixing}. To understand what this means, consider the following equation describing passive-scalar transport:
\begin{equation*}
	\partial_t u =\sqrt{2 }\str \vdiv(u \circ \xi) \, , \quad u_0 \in L^{2}_{0}(\T^d) \, .
\end{equation*}
The above equation preserves all $L^{p}(\TT^{d})$-norms of the initial data $u_0$. However, for appropriate choices of $\xi$, the solution $u_t$ exhibits decay in weaker norms as $t \to \infty$ corresponding to the formation of small-scale mixing of the passive scalar. In particular, the $L^{2}(\TT^{d})$-mass of the scalar gets distributed to higher frequencies as time progresses which can be captured by the decay of negative Sobolev norms. We say that the noise $\xi$ \emph{mixes}  $u_0 \in L^{2}_{0}(\T^d)$, if we have
\begin{equation*}
	\norm{u_t}_{\dot{H}^{-s}(\T^d)} \to 0 \quad \text{ as } t \to \infty \, , \quad \text{ for some } s>0 \, ,
\end{equation*}
where $\dot{H}^{-s}(\T^d)$ is the homogeneous Sobolev space of order $-s$. Note that we have fixed $d\geq2$, since in dimension $d=1$ divergence-free vector fields are necessarily constant and can thus only translate the initial data without mixing it. 

The discussion of the above section implies in particular that the noise sends the $L^{2}(\TT^{d})$-mass of the scalar to higher frequencies as time progresses. Returning to the linearised SPDE~\eqref{eq:linearised_SPDE}, we see that if we study the angular process of this system, which lives on the unit sphere of $L^{2}(\T^d)$, then the mixing property of the noise implies that the angular process explores higher and higher frequencies as time progresses. Since the directions of expansion can only occur in finitely many low-frequency modes, we expect that if the noise is sufficiently strong it can move the solution to the stable modes at a rate faster than the rate of expansion. We now state a rough version of our main result below. The precise statement can be found in Theorem~\ref{thm:main_result}.
\begin{theorem}
	Assume $d\geq 2$, $W$ is sufficiently regular, and $\xi$ is mixing. Then, for $K$ sufficiently large, the SPDE~\eqref{eq:SPDE_intro} admits a unique invariant probability measure, namely, $\delta_{\unif}$.
\end{theorem}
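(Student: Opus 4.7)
The plan is to transplant the Arnold--Crauel--Wihstutz mechanism of the finite-dimensional example to infinite dimensions: show that for $\str$ sufficiently large, the top Lyapunov exponent of the linearization~\eqref{eq:linearised_SPDE} around $\unif$ is strictly negative, which gives pathwise local stability of $\unif$; then combine this with a global attraction argument to identify $\delta_{\unif}$ as the unique invariant probability measure.

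\textbf{Linearization and projective decomposition.} First, I would linearize around $\unif$ to obtain~\eqref{eq:linearised_SPDE}, and write $\lin_{t} = \euler^{r_{t}} s_{t}$ with $r_{t} \defeq \log \norm{\lin_{t}}_{L^{2}}$ and $s_{t} \defeq \lin_{t}/\norm{\lin_{t}}_{L^{2}}$ on the unit sphere of $L^{2}_{0}(\TT^{d})$. Because Stratonovich transport by divergence-free vector fields preserves the $L^{2}$ norm, a direct computation gives
$$r_{t} - r_{0} = \int_{0}^{t} \ip{L s_{\tau}, s_{\tau}}\, \dd \tau,$$
so the Lyapunov exponent is governed entirely by the ergodic theory of the projective process $s_{t}$.

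\textbf{Furstenberg--Khasminskii and the limit $\str \to \infty$.} The main technical obstacle is to establish a Furstenberg--Khasminskii-type formula
$$\lambda_{\textnormal{top}}^{(\str)} = \int \ip{L s, s} \, \mu_{\str}(\dd s),$$
where $\mu_{\str}$ is the unique invariant measure of $s_{t}$ on the unit sphere. This requires ergodicity of the projective process in infinite dimensions, which I would approach by exploiting the mixing hypothesis on $\xi$ to establish irreducibility together with a suitable (e.g.\ asymptotic strong) Feller property. The key quantitative input is then to show that as $\str \to \infty$, the rapid mixing forces $\mu_{\str}$ to concentrate at high frequencies. Since
$$\ip{Ls,s} = -\sum_{k \in \ZZ^{d}\setminus\{0\}} \abs{2\uppi k}^{2}(\diffu + \hat{W}(k))\abs{\hat{s}(k)}^{2},$$
and Riemann--Lebesgue guarantees that only finitely many modes contribute positively, this concentration drives $\lambda_{\textnormal{top}}^{(\str)}$ strictly negative for $\str$ large enough---with the same cancellation mechanism as in the finite-dimensional example.

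\textbf{From linear stability to global uniqueness.} A negative top Lyapunov exponent yields almost-sure exponential decay of $\norm{\lin_{t}}_{L^{2}}$, which by a standard perturbation argument---treating the quadratic interaction term in~\eqref{eq:SPDE_intro} as a controlled perturbation of the linearization---produces a random neighbourhood of $\unif$ in $\init{0}$ from which trajectories of the full SPDE converge to $\unif$ almost surely. Uniqueness of the invariant measure then reduces to a global attraction statement: every trajectory enters this local-stability region with probability one. I would try to establish this by combining $L^{2}$-dissipation of the Laplacian with a tightness/accessibility argument driven by $\xi$, so that any invariant probability measure on $\init{0}$ must concentrate on the stable neighbourhood and therefore equal $\delta_{\unif}$. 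The infinite-dimensional Furstenberg--Khasminskii step and the global accessibility step are, I expect, the most delicate parts of the argument.
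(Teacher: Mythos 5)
Your high-level strategy (negative top Lyapunov exponent at $\unif$, then a global argument identifying $\delta_{\unif}$ as the only invariant measure) matches the paper, but the two key steps are carried out quite differently and your version has a genuine gap.

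\textbf{The Lyapunov exponent.} You propose an infinite-dimensional Furstenberg--Khasminskii formula, which requires proving ergodicity (and hence irreducibility and some Feller property) of the projective process on the unit sphere of $L^{2}_{0}(\TT^{d})$, and then controlling the limit $\mu_{\str}$ as $\str\to\infty$. This is a substantially harder route than the paper's, which avoids the projective process altogether: it invokes a direct \emph{quantitative} almost-sure decay estimate for the linearised equation (adapted from Luo--Tang--Zhao), of the form $\norm{\lin_t}_{L^2}^2 \leq D(\omega)\euler^{-2(2\uppi)^2\gamma t}\norm{\lin_0}_{L^2}^2$ with an explicit rate depending on $\str$, $\colour$, and $C_W^{(\eta)}$. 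Establishing ergodicity of the projective process in this infinite-dimensional, degenerate setting is a serious open problem in its own right (this is the content of recent work of Hairer--Rosati, Hairer--Punshon-Smith--Rosati--Yi, etc., in related but more structured models), and your proposal does not sketch how to do it. Since the direct quantitative estimate suffices, the F--K machinery is avoidable, and you would be well advised to avoid it.

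\textbf{The global step, and the missing regularisation.} You aim for "every trajectory enters the local-stability region with probability one," i.e.\ an almost-sure global attraction statement. This is both stronger than needed and almost certainly too strong to prove here. The paper instead shows that from any closed bounded $A\subset\init{1}$ with $\unif\notin A$, there is a deterministic time $t$ and a constant $c>0$ with $\PP(\rds(t,\cdot,\rho_0)\in B_\eps^0(\unif))\geq c$ uniformly in $\rho_0\in A$ and $\eps>0$; a Fatou argument applied to the invariance identity then yields the contradiction. You do not need full-probability attraction.

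More importantly, your plan to reach the $L^2$ stable neighbourhood by "a tightness/accessibility argument driven by $\xi$" misses a fundamental obstruction: divergence-free transport noise \emph{preserves} the $L^2$ norm, so mixing only gives decay of \emph{negative} Sobolev norms, not of $\norm{\rho_t-\unif}_{L^2}$. You cannot reach an $L^2$-ball around $\unif$ by mixing alone. The paper resolves this with a separate regularisation lemma: after reaching a small $\dot H^{-1}$-ball at time $t_1$ (which mixing \emph{can} do, via the support theorem and the deterministic mixing estimate for Cameron--Martin controls), one waits an additional fixed time $t_2$ and uses the dissipation of the Laplacian together with an integrated $L^2$ bound controlled by the $\dot H^{-1}$-size of the data, combined with Markov's inequality and a pigeonhole argument, to conclude that with probability at least $1/2$ the solution has entered a small $L^2$-ball. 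This $H^{-1}\to L^2$ step is essential and is absent from your proposal; without it the argument does not close.
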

\paragraph{Comments on the literature.}
The study of mixing and enhanced-dissipation phenomena in passive scalars advected by divergence-free vector fields  has been an active field of research since the pioneering work~\cite{constantin_kiselev_ryzhik_zlatos_08}.  %
Due to the extensive literature on this topic, we refer the reader to the review paper~\cite{coti_zelati_crippa_iyer_mazzucato_24} and the references therein for a comprehensive survey of the field. We only mention the papers~\cite{rowan_24,luo_tang_zhao_24,gess_yaroslavtsev_25} which are particularly relevant to this work since they study mixing and dissipation for white-in-time vector fields. We also point to related works studying the Lyapunov exponents of infinite-dimensional linear cocycles~\cite{hairer_rosati_23,hairer_punshon-smith_rosati_yi_24,chemnitz_chemnitz_25}.

SPDEs of the form~\eqref{eq:SPDE_intro} fall into a much larger class of SPDEs with conservative noise which have been studied extensively in recent years, especially in the context of fluctuating hydrodynamics and macroscopic fluctuation theory, see~\cite{giacomin_lebowitz_presutti_99,grun_mecke_rauscher_06,derrida_07,bertini_de_sole_gabrielli_jona-lasinio_landim_15,bouchet_gawedzki_nardini_16}. This includes the so-called Dean--Kawasaki equation and the stochastic thin-film equation. We refer to the papers~\cite{fehrman_gess_24,gvalani_tempelmayr_23} for the well-posedness theory of these equations,~\cite{fehrman_gess_23,djurdjevac_kremp_perkowski_24,cornalba_fischer_ingmanns_raithel_26,gess_heydecker_25} for their rigorous justification as scaling limits of interacting particle systems, and~\cite{fehrman_gess_gvalani_22,popat_wu_25} for the study of their invariant measures and ergodic properties.

McKean--Vlasov equations with common noise have been studied extensively in the literature especially in the context of mean-field games, see for instance the work~\cite{carmona_delarue_lacker_16} and the references therein. The ergodic behaviour of such equations has also been studied in recent works. In~\cite{maillet_23}, the author studies the long-time behaviour of a class of McKean--Vlasov equations driven by common noise which is constant-in-space on~$\R^d$ and shows that if the interaction and confining potentials are convex, then the common noise leads to a unique invariant measure. Note that in this setting the solutions of the deterministic McKean--Vlasov equation are known to be unique. In~\cite{delarue_tanre_maillet_24}, the authors study a more complex version of the same problem when the deterministic McKean--Vlasov equation admits multiple steady states, but for a very specific structure of the interaction. In this setting, they show that if the noise is sufficiently small the invariant measure is unique. Note that this is in contrast with our result where we need the noise to be sufficiently large to restore uniqueness of the invariant measure. Their result is based on fundamentally different effect: the constant-in-space noise and specific choice of interaction allows them to couple their system to its conditional mean whose dynamics are well-understood. 
\section{Statement of the main result}
Before stating our main result in full detail, let us first be more specific about our choice of colouring. Denote $\ZZ_{0}^{d}\defeq\ZZ^{d}\setminus\{0\}$ and let $\colour\in\ell^{2}(\ZZ^{d}_{0})$ be radially symmetric, that is $\colour_{k}=\colour_{l}$ for all $k,l\in\ZZ^{d}_{0}$ such that $\abs{k}=\abs{l}$. We then define the noise~$\xi^{\colour}$ by
\begin{equation}\label{eq:noise_def}
	\xi^{\colour}(t,x)\defeq\sum_{k\in\mathbb{Z}^{d}_{0}}\sum_{j=1}^{d-1}e_{k}(x)a_{k}^{(j)}\colour_{k}\,\dd B^{(j)}(t,k)\,,
\end{equation}
where $e_{k}(x)\defeq\euler^{2\uppi\upi\ip{k,x}}$, $(a_{k}^{(j)})_{j=1}^{d-1}$ is an orthonormal basis of $k^{\perp}\defeq\{x\in\mathbb{R}^{d}:\langle x,k\rangle=0\}$ such that $a^{(j)}_{k}=a^{(j)}_{-k}$, and $\{B^{(j)}(\place,k):k\in\mathbb{Z}_{0}^{d},\,j=1,\ldots,d-1\}$ is a collection of independent complex Brownian motions satisfying $B^{(j)}(\place,k)=\overline{B^{(j)}(\place,-k)}$ and $\EE\bigl[\abs{B^{(j)}(t,k)}^{2}\bigr]=t$ for every $t\geq0$, $k\in\ZZ^{d}_{0}$ and $j=1,\ldots,d-1$.

For notational convenience, throughout the rest of the paper we will set the diffusivity equal to~$1$. 
The general case can be recovered by rescaling time, see Remark~\ref{rem:recovering_diffusivity}. Hence, in the following we consider the SPDE
\begin{equation}\label{eq:SPDE}
	\partial_{t}\rho=\Delta\rho+\vdiv(\rho(\nabla W \ast \rho))+\sqrt{2}\str\vdiv(\rho\circ\xi^{\colour})\,.
\end{equation}

Let $\eta\in(0,1)$ and denote the set of \emph{unstable modes} by 
\begin{equation*}
	\Lambda_{W}^{(\eta)}\defeq\{k\in\mathbb{Z}_{0}^{d}:\eta+\hat{W}(k)<0\}\,,
\end{equation*}
which is finite by $W\in L^{1}(\TT^{d})$ and an application of the Riemann--Lebesgue lemma. Define
\begin{equation}\label{eq:maximal_growth_rate}
	C_{W}^{(\eta)}\defeq\max_{k\in\Lambda_{W}^{(\eta)}}\abs{\eta+\hat{W}(k)}\abs{k}^{2}<\infty
\end{equation}
and the dimension-dependent constant~$C_{d}$ by
\begin{equation}\label{eq:dimension_constant}
	C_{d}
	\defeq
	\frac{d-1}{d}
	\times
	\begin{cases}
		\frac{1}{32} & d=2\\
		\frac{3}{160} & d=3\\
		\frac{d-3}{10d(d-1)} & d\geq4
	\end{cases}
	\,.
\end{equation}
For every $\reg\in\RR$, we denote
\begin{equation*}
	h^{\reg}(\ZZ^{d}_{0})\defeq\Bigl\{\colour=(\colour_{k})_{k\in\ZZ^{d}_{0}}:\norm{\colour}_{h^{\reg}(\ZZ^{d}_{0})}^{2}\defeq\sum_{k\in\ZZ^{d}_{0}}\abs{k}^{2\reg}\abs{\colour_{k}}^{2}<\infty\Bigr\}\,.
\end{equation*}

We can now state our main result.
\begin{theorem}\label{thm:main_result}
	Let $d\geq2$, $W\in C^{2}(\TT^{d})$, 
	$\eta\in(0,1)$,
	$\reg>4$, $\colour\in h^{\reg}(\ZZ_{0}^{d})$ be non-trivial and radially symmetric,
	and $\str>0$ be sufficiently large such that 
	\begin{equation*}
		C_{W}^{(\eta)}-(1-\eta)<\norm{\colour}_{h^{-1}(\ZZ^{d}_{0})}^{2}C_{d}\str^2\,.
	\end{equation*}
	Then the only invariant probability measure of~\eqref{eq:SPDE} is given by~$\delta_{\unif}$.
\end{theorem}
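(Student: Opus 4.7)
The strategy is to show exponential decay in time of $\EE\norm{\rho_t-\unif}_{\dot{H}^{-s}(\TT^d)}^2$ for some small $s\in(0,1)$, at a rate strictly positive precisely under the hypothesis of the theorem. Writing $v_t\defeq\rho_t-\unif$, the perturbation is mean-free and, using $\nabla W\ast\unif=0$ together with $\vdiv\xi^\colour=0$, solves
\begin{equation*}
	\partial_t v = \Delta v + \Delta W\ast v + \vdiv(v(\nabla W\ast v)) + \sqrt{2}\str\vdiv(v\circ\xi^\colour)\,.
\end{equation*}
Once such decay is established (with the prefactor controlled in terms of initial data), the conclusion follows by a standard argument: for any invariant probability measure $\meas$ the quantity $\EE_\meas\norm{v}_{\dot{H}^{-s}}^2$ is constant in time, so it must vanish, forcing $\meas=\delta_\unif$.

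The core estimate is an Itô formula for $\norm{v_t}_{\dot{H}^{-s}}^2$. Diagonalising the deterministic linear drift in Fourier gives
\begin{equation*}
	-2\sum_{k\in\ZZ^d_0}\abs{2\uppi k}^2(1+\hat W(k))\abs{k}^{-2s}\abs{\hat v(k)}^2\,.
\end{equation*}
Splitting $1=\eta+(1-\eta)$ and separating the sum between $\Lambda_W^{(\eta)}$ and its complement, I bound the contribution of the (finitely many) unstable modes by $2C_W^{(\eta)}\norm{v}_{\dot{H}^{-s}}^2$ via the definition~\eqref{eq:maximal_growth_rate}, while the stable modes contribute at most $-2(1-\eta)\norm{v}_{\dot{H}^{1-s}}^2$. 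The noise contribution combines two pieces: the Stratonovich-to-Itô correction, which is a second-order operator $\str^2\vdiv(Q\nabla v)$ whose symbol $Q$ reduces to a scalar multiple of the identity thanks to the radial symmetry of $\colour$ and the span property of $(a_k^{(j)})_j$, and the quadratic variation $2\str^2\sum_{k,j}\norm{\sigma_{k,j}\cdot\nabla v}_{\dot{H}^{-s}}^2$ of the stochastic integral. Expanding both in Fourier yields a (signed) sum in which the \emph{negative} exponent of the Sobolev weight $\abs{\ell}^{-2s}$ is crucial: transport by $\xi^\colour$ moves mass to higher frequencies, which lowers the $\dot{H}^{-s}$ mass more than it raises it, so that after cancellation one is left with a net drift of the form $-2C_d\str^2\norm{\colour}_{h^{-1}(\ZZ_0^d)}^2\norm{v}_{\dot{H}^{1-s}}^2$, with $C_d$ as in~\eqref{eq:dimension_constant}.

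Combining these estimates, using $\norm{v}_{\dot{H}^{-s}}\leq\norm{v}_{\dot{H}^{1-s}}$ for mean-free $v$, absorbs the linear growth $C_W^{(\eta)}$ into the enhanced dissipation and the fraction $(1-\eta)$ of the Laplacian, yielding
\begin{equation*}
	\tfrac{\dd}{\dd t}\EE\norm{v_t}_{\dot{H}^{-s}}^2 \leq -2\bigl[C_d\str^2\norm{\colour}_{h^{-1}}^2+(1-\eta)-C_W^{(\eta)}\bigr]\EE\norm{v_t}_{\dot{H}^{-s}}^2 + (\text{nonlinear error})\,,
\end{equation*}
with the bracket strictly positive by hypothesis. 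The cubic term $\ip{v,\vdiv(v(\nabla W\ast v))}_{\dot{H}^{-s}}$ is treated using $W\in C^2$, interpolation, and \emph{a priori} $L^p$ bounds on $\rho$ which are available because the transport noise is pathwise measure-preserving and thus preserves every $L^p(\TT^d)$-norm; combined with the parabolic smoothing from the genuine Laplacian, this term is either absorbed or shown to be integrable in time.

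\textbf{Main obstacle.} The hard part is the enhanced-dissipation computation: the delicate cancellation between the Stratonovich-to-Itô correction and the quadratic variation of the martingale must be carried out on the \emph{negative} Sobolev space $\dot{H}^{-s}$, and the residual has to be strictly negative with the precise constant $C_d\norm{\colour}_{h^{-1}}^2$. The appearance of the $h^{-1}$ norm, rather than the $\ell^2$ norm of $\colour$, reflects the fact that high-frequency modes of the noise mix most efficiently; quantifying this via a Fourier bookkeeping that simultaneously exploits the divergence-free constraint $a_k^{(j)}\perp k$ and the radial symmetry of $\colour$ is the crux of the proof. A secondary difficulty lies in globalising the nonlinear control uniformly in time, for which a stochastic free-energy identity adapted to the transport noise is the natural route.
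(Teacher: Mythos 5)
Your proposal takes a genuinely different route from the paper, and I don't see how to close its central gap. Let me first flag the mismatch of strategy, then the gap.

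\emph{Comparison of strategies.} The paper's proof is local-then-global: it establishes a strictly negative top Lyapunov exponent of the SPDE \emph{at} $\unif$ (Theorem~\ref{thm:Lyapunov_exponent_negative_energy_spectrum}), invokes the stable manifold theorem to get an $L^{2}$-neighbourhood of $\unif$ that is attracted to $\unif$ with positive probability (Theorem~\ref{thm:asymptotic_stability}), and then globalises via a reachability argument (Theorem~\ref{thm:reachability_uniform}, via Lemma~\ref{lem:reachability}) and a regularisation step (Lemma~\ref{lem:dynamics_regularisation}). The key device in the reachability step is a \emph{time rescaling}: after sending $\scale\to0$ in~\eqref{eq:skeleton_strong_noise_time_change}, the deterministic (and hence nonlinear) part is negligible relative to the transport, so the skeleton equation can be coupled to the pure transport equation, whose mixing (Lemma~\ref{lem:existence_Cameron_Martin_mixer}) drives the solution into an arbitrary $\dot{H}^{-1}$-neighbourhood of $\unif$ uniformly in the initial data. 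This is an essentially qualitative support-theorem argument that circumvents all global control of the nonlinearity. You are instead proposing a single global Lyapunov-functional estimate on $\EE\norm{\rho_{t}-\unif}_{\dot{H}^{-s}}^{2}$, which, if it closed, would be more elementary and also give quantitative convergence rates. Unfortunately it does not close.

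\emph{Gap 1 (the nonlinear term).} The condition in Theorem~\ref{thm:main_result} is calibrated exactly to the \emph{linearised} equation~\eqref{eq:linearised_SPDE}: the spectral gap $\lambda=C_{d}\str^{2}\norm{\colour}_{h^{-1}}^{2}+(1-\eta)-C_{W}^{(\eta)}$ bounds the linear growth near~$\unif$ and nothing else. Your closure requires absorbing the cubic term
$\ip{v,\,\vdiv(v(\nabla W\ast v))}_{\dot{H}^{-s}}$
into this same gap uniformly over \emph{all} admissible $v=\rho-\unif$, and there is no reason this is possible. Any sensible estimate of this term will carry a prefactor of order $\norm W_{C^{2}}\norm{v}_{L^{2}}$ or larger, which bears no relation to $\lambda$. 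Concretely: in the bistable regime (the only interesting regime for the theorem) there are non-uniform steady states $\rho_{\infty}\neq\unif$ of~\eqref{eq:McKean--Vlasov}, and at $v_{\infty}\defeq\rho_{\infty}-\unif$ the deterministic drift (linear plus nonlinear) of $\norm{v}_{\dot{H}^{-s}}^{2}$ vanishes, so the nonlinear term there exactly cancels the linear one, which can be large in absolute value; meanwhile, the noise contribution at $v_{\infty}$ has no reason to be comparable to $\norm{v_{\infty}}_{\dot{H}^{-s}}^{2}$ with the same constant $\lambda$. More simply: perturbations $v$ with $\norm v_{L^{2}}$ of order one but concentrated at high frequencies have $\norm{v}_{\dot{H}^{-s}}$ arbitrarily small, while the cubic term stays order one. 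So the claimed inequality $\frac{\dd}{\dd t}\EE\norm{v_{t}}_{\dot{H}^{-s}}^{2}\leq-2\lambda\EE\norm{v_{t}}_{\dot{H}^{-s}}^{2}+\text{(absorbable error)}$ cannot hold with the theorem's $\lambda$. Even if it held with a worse constant $\lambda-C_{\text{nl}}$ for some nonlinearity constant $C_{\text{nl}}>0$, you would prove a strictly weaker theorem than the one stated. The appeal to ``a priori $L^{p}$ bounds on $\rho$ from the measure-preserving transport'' doesn't help: the deterministic part does not preserve $L^{p}$ norms, and uniform-in-time $L^{2}$ bounds (even if they held) would not make $C_{\text{nl}}$ small.

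\emph{Gap 2 (the noise contribution is not a differential inequality).} The mixing estimate you are drawing on (Theorem~\ref{thm:mixing_almost_surely_energy_spectrum_inviscid_transport} and its viscous relative Theorem~\ref{thm:L2_decay_almost_surely_energy_spectrum}, both from Luo--Tang--Zhao) gives an estimate of the form $\norm{u_{t}}_{\dot{H}^{-1}}^{2}\leq D(\omega)\euler^{-ct}\norm{u_{0}}_{L^{2}}^{2}$, with the \emph{stronger} norm of the initial data on the right. This is essential: pure transport preserves $L^{2}$, so it cannot produce a self-contained Gronwall inequality in $\dot{H}^{-1}$. Your Itô computation therefore gives a net negative drift in a \emph{higher-order} norm ($L^{2}$ or $\dot{H}^{1-s}$), which must be traded against $\dot{H}^{-s}$ using a Poincaré inequality, and the constant $C_{d}\norm{\colour}_{h^{-1}}^{2}$ from~\eqref{eq:dimension_constant} is specific to the $\dot{H}^{-1}$ and $L^{2}$ computations, not to a general $\dot{H}^{-s}$. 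Getting the stated constant out of a $\dot{H}^{-s}$ Itô formula is not a bookkeeping exercise; it requires redoing the Luo--Tang--Zhao computation, and there is no reason the same constant survives. Incidentally, your proposal does not use the hypothesis $\reg>4$ at all, whereas the paper needs it for the support theorem (Theorem~\ref{thm:support_theorem_McKean_Vlasov}) that underlies reachability — a further sign that the estimate you want cannot be reached without the reachability machinery.

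\emph{Minor issue.} The final ``standard argument'' — constancy of $\EE_{\meas}\norm{v}_{\dot{H}^{-s}}^{2}$ under an invariant $\meas$ — requires a priori finiteness of this integral against $\meas$, which you have not established. The paper instead passes through Fatou's lemma and the uniform-in-$\eps$ lower bound of Lemma~\ref{lem:key_reachability}, avoiding any moment assumption on~$\meas$.

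In short: the proposal outlines a clean but essentially quantitative Lyapunov route, and the theorem is not provable that way with the stated constant because the nonlinear drift cannot be controlled by the linear spectral gap away from~$\unif$. The paper's reachability + local stability structure is precisely designed to sidestep this.
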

The full proof of Theorem~\ref{thm:main_result} is given in Subsection~\ref{subsec:uniqueness}. Here, we give a brief overview of the main technical obstructions we need to overcome as well as the strategy we employ to do so.
\paragraph{Technical obstructions.}
The usual approach to proving uniqueness of invariant probability measures for SPDEs relies on Doob's theorem (see~\cite[Thm.~4.2.1]{da_prato_zabzcyk_96}) for which one has to establish that the associated Markov semigroup is strong Feller and irreducible.  In our setting however, this approach fails. Firstly, since the uniform state~$\unif$ is invariant under the flow of the SPDE~\eqref{eq:SPDE}, the process cannot be irreducible. Secondly, for particular choices of the noise and interaction potential, one can also show that the Markov semigroup is not strong Feller. Consider the following example: let $d=2$, $W\equiv 0$, and choose 
\begin{equation*}
	\xi(t,x)\defeq \sum_{k=1}^{4} \sigma_{k}(x)\dot{B}_{t}^{k}\, ,
\end{equation*}
with
\begin{gather*}
	\sigma_1(x) = \begin{pmatrix}   \sin(2\pi x_2)\\0 \end{pmatrix}, \quad \sigma_2(x) = \begin{pmatrix}  \cos(2\pi x_2)\\0 \end{pmatrix},\\
	\sigma_3(x) = \begin{pmatrix}0\\ \sin(2\pi x_1)  \end{pmatrix}, \quad \sigma_4(x) = \begin{pmatrix} 0 \\ \cos(2\pi x_1) \end{pmatrix} \, .
\end{gather*}
The SPDE~\eqref{eq:SPDE_intro} with this choice of noise and interaction is referred to as the 4-modes model. In a recent paper~\cite{chemnitz_chemnitz_25}, Chemnitz and Chemnitz derived the following almost-sure asymptotic lower bound for the solution to the 4-modes model
\begin{equation}\label{eq:chemnitz_lower_bound}
	\lim_{t\to\infty} \frac{1}{t} \log \norm{\rho_t}_{L^2(\T^2)}  \geq -(2\pi)^2\left(\diffu +\frac12\right)  \, ,
\end{equation}
for any initial condition $\rho_0\in L^{2}_{0}(\TT^{d})\setminus\{0\}$,  
which in particular implies the Batchelor scale conjecture for this model.   By adding $\unif$ to both the initial data and the solution, we can extend~\eqref{eq:chemnitz_lower_bound} to initial data in $\init{0}\setminus\{\unif\}$. Using~\eqref{eq:chemnitz_lower_bound}, one can argue that $F\from\init{0}\to \R$ defined as
\begin{equation*}
	F(\rho)
	\defeq
	\begin{cases}
		1 & \text{ if }  \rho= \unif\\
		0 & \text{ else }
	\end{cases}
	\, ,
\end{equation*}
is not regularised by the Markov semigroup. Indeed, if $\rho_0\neq \unif$, then we have for any $t>0$,
\begin{equation*}
	P_t F(\rho_0) = \E[F(\rho_t)|\rho_0]=0 \, ,
\end{equation*}
since if it were not, there would exist some $t>0$ such that with positive probability $\rho_t=\unif$, and thus $\rho_{t'}=\unif$ for all future times $t'\geq t$ as well, contradicting~\eqref{eq:chemnitz_lower_bound}. On the other hand, $P_t F(\unif)=1$ for all $t\geq 0$. Hence, the Markov semigroup does not regularise~$F$ and is thus not strong Feller.

We now explain how we overcome the above obstructions to prove Theorem~\ref{thm:main_result}. 
\paragraph{Strategy of proof.}
The proof of Theorem~\ref{thm:main_result} proceeds in three main steps. First, we use the dissipation enhancement of the transport noise to show that the SPDE~\eqref{eq:SPDE} has a negative top Lyapunov exponent at the uniform state~$\unif$ (see Theorem~\ref{thm:Lyapunov_exponent_negative_energy_spectrum}). In particular, this implies via the stable manifold theorem that there exists an $L^{2}(\TT^{d})$-neighbourhood of~$\unif$ which is attracted to~$\unif$ (see Theorem~\ref{thm:asymptotic_stability}). Assume now that there exists an invariant probability measure $\meas\neq\delta_{\unif}$. In particular, by Lemma~\ref{lem:support_away} there exists some set~$A$ of positive $\meas$-measure which is bounded away from~$\unif$. Second, using the mixing property of the noise again, we then show that there exists a time $t_{1}>0$ such that with uniformly positive probability each trajectory starting in $A$ enters a small neighbourhood around $\unif$ in the $H^{-1}(\TT^{d})$-topology at time~$t_{1}$. Third, we use the regularisation properties of the SPDE~\eqref{eq:SPDE} to show that there exists a time $t_{2}>0$ such that with uniformly positive probability each trajectory which is in a small $H^{-1}(\TT^{d})$-neighbourhood of~$\unif$ at time~$t_{1}$ enters the $L^{2}(\TT^{d})$-stable neighbourhood of~$\unif$ at time $t_{1}+t_{2}$ (see Lemma~\ref{lem:dynamics_regularisation}). Combining these three steps, we thus obtain that there exists a time $t=t_{1}+t_{2}>0$ such that with uniformly positive probability each trajectory starting in $A$ enters the $L^{2}(\TT^{d})$-stable neighbourhood of~$\unif$ at time~$t$ and is subsequently attracted to~$\unif$. This contradicts the invariance of~$\meas$ and thus establishes Theorem~\ref{thm:main_result}. For the full details, see Subsection~\ref{subsec:uniqueness}.
\begin{remark}\label{rem:recovering_diffusivity}
	To recover diffusivities $\diffu>0$ one can rescale time. Indeed, let $\diffu>0$, $\tilde{W}\defeq\diffu W$, $\tilde{\str}\defeq\diffu^{1/2}\str$ and $\rho$ be the solution to~\eqref{eq:SPDE}. Upon rescaling time by a factor of $\diffu$, 
	we obtain that $\tilde{\rho}(t,x)\defeq\rho(\diffu t,x)$ solves
	\begin{equation}\label{eq:SPDE_diffusivity}
		\partial_t\tilde{\rho} = \diffu\Delta\tilde{\rho}+\vdiv(\tilde{\rho}(\nabla\tilde{W}\ast\tilde{\rho}))+\sqrt{2}\tilde{\str}\vdiv(\tilde{\rho}\circ\xi^{\colour})\,.
	\end{equation}
	Let $\eta\in(0,1)$ and $\diffu'\defeq\eta\diffu$. It then follows by Theorem~\ref{thm:main_result} that~\eqref{eq:SPDE_diffusivity} has a unique invariant probability measure if
	\begin{equation*}
		C_{W}^{(\eta)}-(1-\eta)<\norm{\colour}_{h^{-1}(\ZZ^{d}_{0})}^{2}C_{d}\str^{2}\,,
	\end{equation*}
	which is equivalent to
	\begin{equation*}
		C_{\tilde{W}}^{(\diffu')}-(\diffu-\diffu')<\norm{\colour}_{h^{-1}(\ZZ^{d}_{0})}^{2}C_{d}\tilde{\str}^{2}\,.
	\end{equation*}
\end{remark}
We now comment briefly on the structure of the rest of the paper.
\paragraph{Structure of the paper.} 
In Section~\ref{sec:prelim} we collect notations and conventions used throughout the paper. In Section~\ref{sec:flow_transformation} we introduce a flow transformation that allows us to transform the SPDE~\eqref{eq:SPDE} into a PDE with random coefficients, which will be used in Section~\ref{sec:solution_theory_SPDE} to establish the well-posedness of~\eqref{eq:SPDE} as a random dynamical system (RDS). Section~\ref{sec:uniqueness_invariant_measure} is devoted to the proof of our main result (Theorem~\ref{thm:main_result}). In Subsection~\ref{subsec:Lyapunov_exponents_stable_manifold} we establish the strict negativity of the top Lyapunov exponent of~\eqref{eq:SPDE} at the uniform state, which implies asymptotic stability (Theorem~\ref{thm:asymptotic_stability}). In Subsection~\ref{subsec:reachability} we prove a reachability result (Lemma~\ref{lem:reachability}), which shows that every $H^{-1}(\TT^{d})$-neighbourhood of the uniform state can be reached with positive probability uniformly in the initial data. In Subsection~\ref{subsec:regularisation} we prove a regularisation result (Lemma~\ref{lem:dynamics_regularisation}), which allows us to pass from $H^{-1}(\TT^{d})$-neighbourhoods to $L^{2}(\TT^{d})$-neighbourhoods. In Subsection~\ref{subsec:uniqueness} we combine the results of the previous subsections to prove our main result (Theorem~\ref{thm:main_result}). In Section~\ref{sec:examples} we provide explicit examples of interaction potentials~$W$ for which our main result applies. In Appendix~\ref{app:stochastic_characteristics} we discuss the well-posedness and support properties of the stochastic characteristics associated to~\eqref{eq:SPDE}. In Appendix~\ref{app:transport_equations} we collect mixing results for transport equations that are used in the proof of Lemma~\ref{lem:reachability}. Appendix~\ref{app:joint_measurability} concerns the joint measurability of the solution map of~\eqref{eq:SPDE}, which is used in establishing the existence of an RDS. Appendices~\ref{app:continuity_inversion_map}~\&~\ref{app:supports_of_measures} collect auxiliary results used throughout the paper.
\section{Preliminaries and notation}
\label{sec:prelim}
We write~$\NN$ for the natural numbers excluding zero, $\ZZ$ for the integers and $\ZZ_{0}^{d}\defeq\ZZ^{d}\setminus\{0\}$. We define the $d$-dimensional torus by $\TT^{d}\defeq\RR^{d}/\ZZ^{d}$. Throughout, $\abs{\place}$ will indicate the norm $\abs{x}=\bigl(\sum_{i=1}^{d}\abs{x_i}^{2}\bigr)^{1/2}$ on $\RR^{d}$. From now on we will write $\ip{\place,\place}$ to denote the inner product on any Hilbert space which we either specify or leave clear from the context. We denote the transpose of a matrix $M\in\RR^{d\times d}$ by $M^{\trans}$.

If~$\meas$ is a measure on a measurable space $(X,\mathcal{A})$ and $f\from X\to Y$ is a measurable map into another measurable space, we denote the \emph{pushforward measure} of~$\meas$ under~$f$ by $f_{\#}\meas$. If~$X$ denotes a random variable, we denote by $\supp(X)$ the (topological) support of its law.

We denote the closure of a set $A$ in a topological space $X$ by $\cl A$ and the Borel sigma-algebra of~$X$ by $\mathcal{B}(X)$.

For Banach spaces $X,Y$, we denote the space of bounded, linear operators by $L(X;Y)$, and write $L(X)\defeq L(X;X)$.

We define $\ell^{2}(\ZZ^{d}_{0})$ to be the space of square-summable, real-valued sequences indexed by $\ZZ^{d}_{0}$. For every $\reg\in\RR$, we define
\begin{equation*}
	h^{\reg}(\ZZ^{d}_{0})\defeq\Bigl\{\colour=(\colour_{k})_{k\in\ZZ^{d}_{0}}:\norm{\colour}_{h^{\reg}(\ZZ^{d}_{0})}^{2}\defeq\sum_{k\in\ZZ^{d}_{0}}\abs{k}^{2\reg}\abs{\colour_{k}}^{2}<\infty\Bigr\}\,.
\end{equation*}
We call $\colour \in \ell^{2}(\ZZ^{d}_{0})$ \emph{radially symmetric} if $\colour_{k}=\colour_{l}$ for all $k,l\in\ZZ^{d}_{0}$ such that $\abs{k}=\abs{l}$.

We denote the space of homeomorphisms on~$\TT^{d}$ by $\homeo$ and for every $m\in\NN$ the space of $m$-times continuously differentiable diffeomorphisms on~$\TT^{d}$ by $\diff{m}$. We equip $\homeo$ (resp.\ $\diff{m}$) with the subspace topology induced by the compact-open topology on $C(\TT^{d};\TT^{d})$ (resp.\ $C^{m}(\TT^{d};\TT^{d})$). We call an element $\flow\in\diff{1}$ volume preserving if $\abs{\det\D\flow(x)}=1$ for all $x\in\TT^{d}$, where~$\D$ denotes the total differential.
We denote the space of volume-preserving $m$-times continuously differentiable diffeomorphisms on $\TT^{d}$ by $\diffvol{m}$. It follows by the continuity of the determinant that $\diffvol{m}$ is a closed subset of $\diff{m}$, hence Polish.
For every $\flow\from[0,\infty)\to\homeo$ we denote the \emph{spatial} inverse by~$\flow^{-1}$ and for every $\flow\from[0,\infty)\to\diff{1}$ the \emph{spatial} total differential by~$\D\flow$. 

For every $k\in\ZZ^{d}$ we define the complex exponential $e_{k}\from\TT^{d}\to\mathbb{C}$ by $e_{k}(x)\defeq\euler^{2\uppi\upi\ip{k,x}}$. We denote the complex conjugate of $z\in\mathbb{C}$ by $\overline{z}$. We define for every $u\in L^{2}(\TT^{d})$ and $k\in\ZZ^{d}$, the Fourier coefficient $\hat{u}(k)$ by
\begin{equation*}
	\hat{u}(k)\defeq\int_{\TT^{d}}u(x)e_{-k}(x)\,\dd x\,,
\end{equation*}
which is extended by duality to $\mathcal{S}'(\TT^{d})$, the space of distributions on~$\TT^{d}$, and componentwise to vector-valued functions and distributions.

For every $s\in\RR$ we denote the Sobolev space of order~$s$ by
\begin{equation*}
	\begin{split}
		H^{s}(\TT^{d})\defeq\Bigl\{u\in\mathcal{S}'(\TT^{d}):\,&\hat{u}(k)=\overline{\hat{u}(-k)}\text{ for all }k\in\ZZ^{d},\\
		&\norm{u}_{H^{s}(\TT^{d})}^{2}\defeq\sum_{k\in\ZZ^{d}}(1+\abs{2\uppi k}^{2})^{s}\abs{\hat{u}(k)}^{2}<\infty\Bigr\}\,.
	\end{split}
\end{equation*}

We denote by $L^{2}_{0}(\TT^{d})$ the space of mean-free, square-integrable functions on $\TT^{d}$ equipped with the $L^2(\TT^d)$-norm. For every $s\in\RR$ we denote 
\begin{equation*}
	\begin{split}
		\dot{H}^{s}(\TT^{d})\defeq\Bigl\{u\in\mathcal{S}'(\TT^{d}):\,&\hat{u}(0)=0,\,\hat{u}(k)=\overline{\hat{u}(-k)}\text{ for all }k\in\ZZ^{d}_{0},\\
		&\multiquad[3]\norm{u}_{\dot{H}^{s}(\TT^{d})}^{2}\defeq\sum_{k\in\ZZ^{d}_{0}}\abs{2\uppi k}^{2s}\abs{\hat{u}(k)}^{2}<\infty\Bigr\}\,.
	\end{split}
\end{equation*}
In particular, $\dot{H}^{0}(\TT^{d})=L^{2}_{0}(\TT^{d})$.

We define 
\begin{equation*}
	\init{0}\defeq\Bigl\{u\in L^{2}(\TT^{d}):u\geq0~\text{a.e. and }\int_{\TT^{d}}u(x)\,\dd x=1\Bigr\}
\end{equation*}
and
\begin{equation*}
	\init{1}\defeq H^{1}(\TT^{d})\cap\init{0}\,,
\end{equation*}
equipped with their respective subspace topologies. 

Let $\unif\from\TT^{d}\to\RR$ be the uniform state given by $\unif(x)=1$ for all $x\in\TT^{d}$. By an abuse of notation, we also denote the space-time uniform state by~$\unif$. We denote for every $s\in\RR$ and $r>0$,
\begin{equation*}
	B_{r}^{s}(\unif)\defeq\{u\in H^{s}(\mathbb{T}^{d}):\hat{u}(0)=1,\,\lVert u-\unif\rVert_{\dot{H}^{s}(\mathbb{T}^{d})}<r\}\,.
\end{equation*}

We denote for every $U\subset L^{2}(\TT^{d})$, $\diam(U)\defeq\sup_{u,v\in U}\norm{u-v}_{L^{2}(\TT^{d})}$.

We define the Leray projection $\Pi\from L^{2}(\TT^{d};\RR^{d})\to L^{2}(\TT^{d};\RR^{d})$ by
\begin{equation*}
	(\Pi f)(x)\defeq\sum_{k\in\ZZ_{0}^{d}}e_{k}(x)P_{k}\hat{f}(k)\,,\qquad P_{k}\defeq1-\frac{k\otimes k}{\abs k^{2}}\,,
\end{equation*}
and for every $\colour\in\ell^{2}(\ZZ^{d}_{0})$ the Fourier multiplier $\colour(D)\from L^{2}(\TT^{d};\RR^{d})\to L^{2}(\TT^{d};\RR^{d})$ by
\begin{equation*}
	(\colour(D)f)(x)\defeq\sum_{k\in\ZZ_{0}^{d}}e_{k}(x)\colour_{k}\hat{f}(k)\,.
\end{equation*}
For every $\colour\in\ell^2(\ZZ^{d}_{0})$ we define
\begin{equation}\label{eq:Cameron--Martin_space}
	\cm(\colour)\defeq L^{2}\bigl([0,T];\colour(D)\Pi L^{2}(\TT^{d};\RR^{d})\bigr)\,,
\end{equation}
which is the Cameron--Martin space of the noise~$\xi^{\colour}$ defined in~\eqref{eq:noise_def}.

Let~$H$ be some separable Hilbert space such that the embedding $\ell^{2}(\ZZ_{0}^{d};\RR^{d-1})\embed H$
is Hilbert--Schmidt. Let 
\begin{equation*}
	\Omega\defeq C_{0}(\RR;H)\defeq\{\omega\in C(\RR;H):\omega(0)=0\}\,,
\end{equation*}
be equipped with the compact-open topology,
\begin{equation*}
	\mathcal{F}^{0}\defeq\mathcal{B}(C_{0}(\RR;H))=\sigma(\omega(t):t\in\RR)\,,
\end{equation*}
and~$\PP$ be the law of the cylindrical Wiener process on $(\Omega,\mathcal{F}^{0})$,
which can be obtained by gluing two independent cylindrical Wiener
processes to each other at $t=0$. 

For every $t\in\RR$ we define the \emph{shift}
\begin{equation*}
	\shift_{t}\from\Omega\to\Omega\,,\qquad\shift_{t}\omega(s)\defeq\omega(s+t)-\omega(t)\,,\quad s\in\RR\,.
\end{equation*}
It then follows by~\cite[Prop.~4.1]{fehrman_gess_gvalani_22},
that $(\Omega,\mathcal{F}^{0},\PP,(\shift_{t})_{t\in\RR})$ is a \emph{metric
dynamical system} (MDS) in the sense of~\cite[Def.~3]{arnold_scheutzow_95}. In particular, $\PP$ is $(\shift_{t})_{t\in\RR}$-invariant. Let~$\mathcal{F}$ be the $\PP$-completion of~$\mathcal{F}^{0}$
and denote for every $s\leq t$ by~$\mathcal{F}_{s,t}$ the $\PP$-completion of
\begin{equation*}
	\sigma(\omega(u)-\omega(v):s\leq u\leq v\leq t)\,.
\end{equation*}
One can then show (see e.g.~\cite[Sec.~4]{fehrman_gess_gvalani_22}),
that
\begin{equation*}
	\shift_{u}^{-1}\mathcal{F}_{s,t}=\mathcal{F}_{s+u,t+u}\,,
\end{equation*}
which implies that $(\Omega,\mathcal{F}^{0},\PP,(\shift_{t})_{t\in\RR},(\mathcal{F}_{s,t})_{s\leq t})$
is a \emph{filtered dynamical system} (FDS) in the sense of~\cite[Def.~8]{arnold_scheutzow_95}. We further set $\mathcal{F}_{t}\defeq\mathcal{F}_{0,t}$.

Let~$X$ be a topological
space. A \emph{continuous (one-sided) random dynamical system (RDS)
(over $(\Omega,\mathcal{F}^{0},\PP,(\shift_{t})_{t\in\RR})$)} is
a map $\flow\from[0,\infty)\times\Omega\times X\to X$ such that
\begin{enumerate}
	\item $\flow$ is $\bigl(\mathcal{B}([0,\infty))\otimes\mathcal{F}^{0}\otimes\mathcal{B}(X),\mathcal{B}(X)\bigr)$-measurable;
	\item For all $s,t\in[0,\infty)$, $x\in X$ and $\omega\in\Omega$ it holds
	that $\flow(0,\omega,x)=x$ and
	\begin{equation*}
		\flow(s+t,\omega,x)=\flow(t,\shift_{s}\omega,\flow(s,\omega,x))\,;
	\end{equation*}
	\item For each $\omega\in\Omega$,
	\begin{equation*}
		\flow(\place,\omega,\place)\from[0,\infty)\times X\to X\,,\qquad(t,x)\mapsto\flow(t,\omega,x)
	\end{equation*}
	is continuous.
\end{enumerate}
Assume, in addition, that~$X$ is a $d$-dimensional, smooth manifold
and $m\in\NN$. A $C^{m}(X;X)$-RDS is a continuous RDS $\flow$ such
that for each $(t,\omega)\in[0,\infty)\times\Omega$, the map
\begin{equation*}
	\flow(t,\omega,\place)\from X\to X\,,\qquad x\mapsto\flow(t,\omega,x)
\end{equation*}
is $m$-times continuously differentiable in $x$ with derivatives
continuous in $(t,x)$. See~\cite{arnold_98} for a comprehensive treatment.

Let $m\in\NN$ and $(\flow_{t})_{t\geq0}$ be a stochastic process over $(\Omega,\mathcal{F}_{0},\PP)$. We call $(\flow_{t})_{t\geq0}$ a \emph{stochastic flow of homeomorphisms (resp.\ $C^{m}(\TT^{d};\TT^{d})$-diffeomorphisms and volume-preserving $C^{m}(\TT^{d};\TT^{d})$-diffeomorphisms)} if for every $\omega\in\Omega$, it holds that $\flow(\omega)\in C([0,\infty);\homeo)$ (resp.\ $\flow(\omega)\in C([0,\infty);\diff{m})$ and $\flow(\omega)\in C([0,\infty);\diffvol{m})$) such that $\flow_{0}(\omega)=\mathrm{id}$. See~\cite{kunita_90} for a comprehensive treatment.
\section{The flow transformation}\label{sec:flow_transformation}
For our eventual application of the stable manifold theorem, we need to show that the SPDE~\eqref{eq:SPDE} induces a random dynamical system. To this end, we first introduce the so-called \emph{flow transformation} of~\eqref{eq:SPDE}, which transforms the SPDE into a random PDE driven by a stochastic flow of volume-preserving diffeomorphisms induced by the stochastic characteristics. We then show that the flow transformation admits a pathwise unique weak solution which depends continuously on the initial data and the driving flow of diffeomorphisms (Lemma~\ref{lem:well_posedness_flow_transformation_deterministic}). This allows us to define a random dynamical system associated to~\eqref{eq:SPDE} in Subsection~\ref{subsec:RDS_for_SPDE}.

Let $\reg>2$, $\colour\in h^{\reg}(\ZZ_{0}^{d})$ be radially symmetric and $\str>0$. Denote by $\flow\from[0,\infty)\times\Omega\times\TT^{d}\to\TT^{d}$ the stochastic characteristics induced by the SDE,
\begin{equation}\label{eq:stochastic_characteristics_SDE}
	\dd X_{t}=-\sqrt{2}\str\sum_{k\in\mathbb{Z}_{0}^{d}}\sum_{j=1}^{d-1}\colour_{k}a_{k}^{(j)}e_{k}(X_{t})\circ\dd B^{(j)}(t,k)\,,\qquad X_{0}=x\in\TT^{d}\,,
\end{equation}
which exist by an application of Lemma~\ref{lem:RDS_for_stochastic_characteristics} as a stochastic flow of volume-preserving $C^{2}(\TT^{d};\TT^{d})$-diffeomorphisms and a random dynamical system. In particular, $\flow(\omega)\defeq\flow(\place,\omega,\place)\in C([0,\infty);\diffvol{2})$ for every $\omega\in\Omega$.

Recall that we denote the spatial inverse by
\begin{equation*}
	\flow^{-1}=(\flow^{-1,1},\ldots,\flow^{-1,d})\from\Omega\times[0,\infty)\times\TT^{d}\to\TT^{d}\,.
\end{equation*}

Define
\begin{equation*}
	A(t,\flow(\omega),x)\defeq(\D\flow(t,\omega,x))^{-1}((\D\flow(t,\omega,x))^{-1})^{\trans}\,,
\end{equation*}
\begin{equation*}
	b^{i}(t,\flow(\omega),x)\defeq(\Delta\flow^{-1,i})(t,\omega,\flow(t,\omega,x))\,,\qquad i=1,\ldots,d\,,
\end{equation*}
and 
for every measurable function $v\from\Omega\times[0,\infty)\times\TT^{d}\to\RR$,
\begin{equation*}
	((\nabla W)\ast_{\flow}v)(t,\omega,x)\defeq\int_{\TT^{d}}(\nabla W)(\flow(t,\omega,x)-\flow(t,\omega,y))v(t,\omega,y)\,\dd y\,.
\end{equation*}
Let $(\omega,t,x)\mapsto\rho(t,x;\omega,\rho_{0})$ be a solution to~\eqref{eq:SPDE} with initial data $\rho_{0}\in L^{2}(\TT^{d})$ and define for every fixed $\omega\in\Omega$, $t\geq0$, $x\in\TT^{d}$,
\begin{equation*}
	\overline{\rho}(t,x;\flow(\omega),\rho_{0})\defeq\rho(t,\flow(t,\omega,x);\omega,\rho_{0})\,.
\end{equation*}
It then follows by an application of the It\^{o}--Wentzell formula (see e.g.~\cite[Prop.~2.14]{agresti_sauerbrey_veraar_25}), that $\overline{\rho}(t,x)\defeq\overline{\rho}(t,x;\flow(\omega),\rho_{0})$ solves the path-by-path PDE,
\begin{equation}\label{eq:path_by_path_PDE}
	\begin{split}
		\partial_{t}\overline{\rho}(t,x)&=\trace(A(t,\flow(\omega),x)\D^{2}\overline{\rho}(t,x))\\
		&\quad+b(t,\flow(\omega),x)\cdot\D\overline{\rho}(t,x)\\
		&\quad+(\D\flow(t,\omega,x))^{-1}\colon(\D(\overline{\rho}((\nabla W)\ast_{\flow}\overline{\rho}))(t,\omega,x))^{\trans}\,.
	\end{split}
\end{equation}
We refer to~$\overline{\rho}$ as the \emph{flow transformation} of~$\rho$. Note that we can recover~$\rho$ from~$\overline{\rho}$ via the relation
\begin{equation*}
	\rho(t,x;\omega,\rho_{0})=\overline{\rho}(t,\flow^{-1}(t,\omega,x);\flow(\omega),\rho_{0})\,.
\end{equation*}
Let $T>0$. We first study the well-posedness of the PDE~\eqref{eq:path_by_path_PDE} driven by a \emph{deterministic} element~$\flow$ of $C([0,T];\diffvol{2})$. In particular, Lemma~\ref{lem:well_posedness_flow_transformation_deterministic} shows that there exists a unique weak solution to~\eqref{eq:path_by_path_PDE}, which is locally Lipschitz continuous in the initial data and locally Hölder continuous in~$\flow$. The pathwise well-posedness of~\eqref{eq:path_by_path_PDE} driven by the stochastic characteristics then follows by an application of Lemma~\ref{lem:RDS_for_stochastic_characteristics}, see Lemma~\ref{lem:well_posedness_flow_transformation}.

We first define what we understand to be a weak solution to~\eqref{eq:path_by_path_PDE} driven by a fixed~$\flow$.
\begin{definition}[{Weak Solution to~\eqref{eq:path_by_path_PDE}}]\label{def:weak_solution_flow_transformation} 
	Let $T>0$, $\rho_{0}\in\init{0}$, $W\in C^{2}(\TT^{d})$ and $\flow\in C([0,T];\diffvol{2})$.
	We call
	\begin{equation*}
		\overline{\rho}\in C([0,T];L^{2}(\TT^{d}))\cap L^{2}([0,T];H^{1}(\TT^{d}))
	\end{equation*}
	a weak solution to~\eqref{eq:path_by_path_PDE} driven by~$\flow$ 
	and with initial data~$\rho_{0}$, if for all $t\in[0,T]$ and $\psi\in H^{1}(\TT^{d})$,
	\begin{equation*}
		\begin{split}
			&\ip{\overline{\rho}_{t},\psi}_{L^{2}(\TT^{d})}\\
			&\multiquad[3]=\ip{\rho_{0},\psi}_{L^{2}(\TT^{d})}\\
			&\multiquad[3]\quad+\int_{0}^{t}\ip{\trace(A(s,\flow,\place)\D^{2}\overline{\rho}_{s}),\psi}_{H^{-1}(\TT^{d}),\,H^{1}(\TT^{d})}\,\dd s\\
			&\multiquad[3]\quad+\int_{0}^{t}\ip{b(s,\flow,\place)\cdot\D\overline{\rho}_{s},\psi}_{H^{-1}(\TT^{d}),\,H^{1}(\TT^{d})}\,\dd s\\
			&\multiquad[3]\quad+\int_{0}^{t}\ip{\bigl((\D\flow)^{-1}\colon(\D(\overline{\rho}((\nabla W)\ast_{\flow}\overline{\rho})))^{\trans}\bigr)(s,\place),\psi}_{H^{-1}(\TT^{d}),\,H^{1}(\TT^{d})}\,\dd s\,.
		\end{split}
	\end{equation*}
\end{definition}
\begin{lemma}\label{lem:well_posedness_flow_transformation_deterministic}
	Let $T>0$, $\rho_{0}\in\init{0}$, $W\in C^{2}(\TT^{d})$ and $\flow\in C([0,T];\diffvol{2})$.
	Then there exists a unique weak solution $(t,x)\mapsto\overline{\rho}(t,x;\flow,\rho_{0})$ to~\eqref{eq:path_by_path_PDE} driven by~$\flow$ and with initial data~$\rho_{0}$. For fixed~$\flow$, the map $\rho_{0}\mapsto\overline{\rho}(\place,\place;\flow,\rho_{0})$ is locally Lipschitz continuous from $\init{0}$ to $C([0,T];\init{0})$. Further, for fixed $\rho_{0}$, the map $\flow\mapsto\overline{\rho}(\place,\place;\flow,\rho_{0})$ is locally $1/2$-Hölder continuous from $C([0,T];\diffvol{2})$ to $C([0,T];\init{0})$.
\end{lemma}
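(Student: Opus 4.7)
The PDE~\eqref{eq:path_by_path_PDE} is a non-autonomous semilinear parabolic equation whose principal part $\trace(A\,\D^{2}\overline{\rho})+b\cdot\D\overline{\rho}$ is the pullback by $\flow(t)\in\diffvol{2}$ of the Laplacian: by volume preservation (equivalently, by the change of variables $y=\flow(t,x)$ in the weak formulation of $\Delta$) it coincides with $\vdiv(A\nabla\overline{\rho})$, where $A=(\D\flow)^{-1}(\D\flow)^{-\trans}$ is symmetric, bounded, and uniformly positive definite, with ellipticity constants that are locally uniform on bounded subsets of $C([0,T];\diffvol{2})$. The convolution nonlinearity is similarly the pullback of the classical McKean--Vlasov divergence-form nonlinearity $\vdiv(\rho(\nabla W\ast\rho))$, and can also be written in divergence form after integrating by parts. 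I would therefore rephrase the weak formulation of Definition~\ref{def:weak_solution_flow_transformation} as a fully divergence-form identity, after which standard parabolic variational tools apply.

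\paragraph{Existence and uniqueness.} I would construct a weak solution by Galerkin approximation in a Fourier basis of $L^{2}(\TT^{d})$. Mass is conserved (testing against $\psi\equiv1$ uses the divergence-form structure and makes all right-hand side terms vanish) and positivity is preserved by the weak maximum principle for the non-divergence form parabolic operator, so $\|\overline{\rho}(t)\|_{L^{1}(\TT^{d})}=1$, whence $(\nabla W)\ast_{\flow}\overline{\rho}$ is bounded pointwise by $\|\nabla W\|_{L^{\infty}}$. Testing against $\overline{\rho}$ gives
\begin{equation*}
	\tfrac{1}{2}\tfrac{\dd}{\dd t}\|\overline{\rho}\|_{L^{2}(\TT^{d})}^{2}+\lambda(\flow)\|\nabla\overline{\rho}\|_{L^{2}(\TT^{d})}^{2}\leq C(\flow,W)\|\overline{\rho}\|_{L^{2}(\TT^{d})}\|\nabla\overline{\rho}\|_{L^{2}(\TT^{d})}\,,
\end{equation*}
and Young's inequality together with Grönwall produce a uniform a-priori bound in $C([0,T];L^{2}(\TT^{d}))\cap L^{2}([0,T];H^{1}(\TT^{d}))$. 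Aubin--Lions compactness, combined with the strong $L^{2}$-continuity of the nonlinearity in $\overline{\rho}$, then yields a weak limit. Uniqueness, and hence local Lipschitz dependence on $\rho_{0}$ with $\flow$ fixed, follows by the same scheme: subtract two solutions, test the difference equation against $\overline{\rho}_{1}-\overline{\rho}_{2}$, bound the bilinear nonlinear contribution by $C\|\overline{\rho}_{1}-\overline{\rho}_{2}\|_{L^{2}(\TT^{d})}\|\nabla(\overline{\rho}_{1}-\overline{\rho}_{2})\|_{L^{2}(\TT^{d})}$ using the uniform $L^{1}$-bound, absorb into the elliptic term, and apply Grönwall.

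\paragraph{Continuous dependence in $\flow$.} For local $1/2$-Hölder continuity in $\flow$ with $\rho_{0}$ fixed, I would compare $\overline{\rho}_{i}\defeq\overline{\rho}(\place,\place;\flow_{i},\rho_{0})$, $i=1,2$. The equation for $\overline{\rho}_{1}-\overline{\rho}_{2}$ picks up source terms involving $A_{1}-A_{2}$, $b_{1}-b_{2}$, $(\D\flow_{1})^{-1}-(\D\flow_{2})^{-1}$, and the convolution difference $(\nabla W)\ast_{\flow_{1}}\overline{\rho}_{1}-(\nabla W)\ast_{\flow_{2}}\overline{\rho}_{1}$. Using $W\in C^{2}(\TT^{d})$ and the continuity of the inversion map in the $C^{2}$-topology (Appendix~\ref{app:continuity_inversion_map}), each of these is locally controlled by $\|\flow_{1}-\flow_{2}\|_{C([0,T];\diffvol{2})}$. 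After integration by parts to avoid $\D^{2}\overline{\rho}_{1}$, testing against $\overline{\rho}_{1}-\overline{\rho}_{2}$, absorbing the gradient term into the elliptic part, and applying Cauchy--Schwarz in time against the locally uniform $L^{2}([0,T];H^{1}(\TT^{d}))$-bound on $\overline{\rho}_{1}$, one obtains an estimate whose square root yields the desired $1/2$-Hölder bound after a final Grönwall step.

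\paragraph{Main obstacle.} The delicate step is the $\flow$-comparison estimate: the convolution nonlinearity couples $\overline{\rho}_{1}$ and $\overline{\rho}_{2}$ both through the flows $\flow_{i}$ and through each other, so care is required to decouple these contributions while preserving the linear scaling of the source terms in $\|\flow_{1}-\flow_{2}\|_{C([0,T];\diffvol{2})}$ and keeping the Grönwall constants locally uniform. The uniform $L^{1}$-bound and the divergence-form structure of the nonlinearity are what ultimately make the estimate close.
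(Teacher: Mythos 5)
Your approach is essentially the one the paper takes: the paper establishes local well-posedness and Lipschitz dependence on $\rho_{0}$ by invoking the locally monotone / generalized coercive variational framework of Liu--Röckner, which is a Galerkin-based argument, and obtains the invariance of $\init{0}$, global-in-time existence, and the local $1/2$-Hölder continuity in $\flow$ via a priori energy estimates — exactly the scheme you spell out (identification of the divergence-form structure via volume preservation, Galerkin with Aubin--Lions compactness, and Grönwall for the comparison estimates). One small imprecision: in your uniqueness / Lipschitz step the cross term $\bar\rho_{2}\,\nabla W\ast(\bar\rho_{1}-\bar\rho_{2})$ needs the a priori $L^{2}(\TT^{d})$-bound on $\bar\rho_{2}$ in addition to the $L^{1}(\TT^{d})$-bound you cite, but this bound is available and does not change the argument.
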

\begin{proof}
	The equation is locally monotone and generalized coercive. Hence, local well-posedness and continuity in the initial data follow by a Galerkin approximation, see e.g.~\cite[Thms.~1.1~\&~.1.2]{liu_roeckner_13}. The invariance of $\init{0}$, global well-posedness and the local Hölder continuity in~$\flow$ follow by a priori energy estimates.
\end{proof}
Combining Lemma~\ref{lem:well_posedness_flow_transformation_deterministic} with Lemma~\ref{lem:RDS_for_stochastic_characteristics} we obtain the well-posedness of the flow transformation driven by the stochastic characteristics.
\begin{lemma}\label{lem:well_posedness_flow_transformation}
	Let $T>0$, $\rho_{0}\in\init{0}$, $W\in C^{2}(\TT^{d})$, $\reg>2$, $\colour\in h^{\reg}(\ZZ^{d}_{0})$ be radially symmetric, $\str>0$ and $\flow$ be the stochastic characteristics induced by~\eqref{eq:stochastic_characteristics_SDE} as constructed in Lemma~\ref{lem:RDS_for_stochastic_characteristics}. Then for every $\omega\in\Omega$ there exists a unique weak solution $(t,x)\mapsto\overline{\rho}(t,x;\flow(\omega),\rho_{0})$ to~\eqref{eq:path_by_path_PDE} driven by~$\flow(\omega)$ and with initial data~$\rho_{0}$. It follows that the map $\rho_{0}\mapsto\overline{\rho}(\place,\place;\flow(\omega),\rho_{0})$ is locally Lipschitz continuous from $\init{0}$ to $C([0,T];\init{0})$ for every $\omega\in\Omega$.
\end{lemma}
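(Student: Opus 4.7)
The plan is straightforward: the claim should reduce to a path-by-path application of Lemma~\ref{lem:well_posedness_flow_transformation_deterministic}, once one knows that the stochastic characteristics provide an admissible driver for every realisation~$\omega$. Hence the proof proposal has essentially two bookkeeping steps and no new analysis.

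First I would invoke Lemma~\ref{lem:RDS_for_stochastic_characteristics}, whose hypotheses are satisfied here since $\reg>2$ and $\colour\in h^{\reg}(\ZZ_{0}^{d})$ is radially symmetric, to conclude that $\flow$ is a stochastic flow of volume-preserving $C^{2}(\TT^{d};\TT^{d})$-diffeomorphisms. In particular, for every $\omega\in\Omega$ one has $\flow(\omega)\in C([0,\infty);\diffvol{2})$, and restriction to the interval $[0,T]$ places $\flow(\omega)$ precisely in the class of drivers $C([0,T];\diffvol{2})$ addressed by Lemma~\ref{lem:well_posedness_flow_transformation_deterministic}.

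Second, fixing $\omega\in\Omega$, I would apply Lemma~\ref{lem:well_posedness_flow_transformation_deterministic} with the deterministic driver $\flow(\omega)$ and the initial datum $\rho_{0}\in\init{0}$. This immediately yields existence and uniqueness of a weak solution $(t,x)\mapsto\overline{\rho}(t,x;\flow(\omega),\rho_{0})$ to~\eqref{eq:path_by_path_PDE} as well as the local Lipschitz continuity of $\rho_{0}\mapsto\overline{\rho}(\place,\place;\flow(\omega),\rho_{0})$ from $\init{0}$ into $C([0,T];\init{0})$ at the fixed driver $\flow(\omega)$. Since the conclusion is required for every $\omega\in\Omega$, this finishes the argument.

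There is no genuine analytic obstacle: all the heavy lifting is packaged in the two cited lemmas, and the present statement is a pointwise-in-$\omega$ compilation of them. The only compatibility check worth recording is that the coefficients $A(t,\flow(\omega),\place)$ and $b(t,\flow(\omega),\place)$ appearing in Definition~\ref{def:weak_solution_flow_transformation} are well-defined and sufficiently regular for each realisation, which follows from $\flow(\omega)\in C([0,\infty);\diffvol{2})$ together with the chain-rule identities used to express $\D\flow^{-1}$ and $\Delta\flow^{-1,i}$ in terms of $\D\flow$ and $\D^{2}\flow$.
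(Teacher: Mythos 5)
Your argument is correct and matches the paper's proof exactly: the paper likewise reduces the claim to a pointwise-in-$\omega$ combination of Lemma~\ref{lem:RDS_for_stochastic_characteristics} (to ensure $\flow(\omega)\in C([0,T];\diffvol{2})$) and Lemma~\ref{lem:well_posedness_flow_transformation_deterministic}. Your added remark on the coefficients $A$ and $b$ being well-defined for each realisation is a helpful compatibility check, but it is already implicit in the cited lemmas.
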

\begin{proof}
	The claim follows by combining Lemmas~\ref{lem:well_posedness_flow_transformation_deterministic}~\&~\ref{lem:RDS_for_stochastic_characteristics}.
\end{proof}
\section{Solution theory for the SPDE}\label{sec:solution_theory_SPDE}	
In this section we study the well-posedness of the SPDE~\eqref{eq:SPDE}. To this end, we first show in Subsection~\ref{subsec:well_posedness_SPDE} that there exists a pathwise-unique weak solution to~\eqref{eq:SPDE}, which can be represented via the flow transformation~\eqref{eq:path_by_path_PDE} driven by the stochastic characteristics induced by~\eqref{eq:stochastic_characteristics_SDE}. In Subsection~\ref{subsec:regularity_SPDE} we then study regularity properties of the SPDE. In Subsection~\ref{subsec:continuity_SPDE_wrt_stochastic_characteristics} we establish the continuity of the solution map with respect to the stochastic characteristics. We then use this continuity to show in Subsection~\ref{subsec:support_theorem_SPDE} a support theorem and in Subsection~\ref{subsec:RDS_for_SPDE} the existence of a random dynamical system associated to~\eqref{eq:SPDE}.
\subsection{Well-posedness}\label{subsec:well_posedness_SPDE}
In this subsection we show that there exists a pathwise-unique weak solution to the SPDE~\eqref{eq:SPDE}.
\begin{definition}[{Weak Solution to~\eqref{eq:SPDE}}]\label{def:weak_solution_SPDE}
	Let $(\Omega,\mathcal{F},(\mathcal{F}_{t})_{t\geq0},\PP)$ be as in Section~\ref{sec:prelim}, $T>0$, $W\in C^{2}(\TT^{d})$, $\colour\in\ell^{2}(\ZZ^{d}_{0})$ be radially symmetric, $\str>0$ and $\rho_{0}\in L^{2}(\TT^{d})$. 
	We call a progressively measurable map $\rho\from\Omega\times[0,T]\times\TT^{d}\to\RR$ a weak solution
	to~\eqref{eq:SPDE} with initial data $\rho_{0}$, if it satisfies $\PP\text{-a.s.}$
	\begin{equation*}
		\rho\in C([0,T];L^{2}(\TT^{d}))\cap L^{2}([0,T];H^{1}(\TT^{d}))\,,
	\end{equation*}
	and for all $\psi\in H^{1}(\TT^{d})$, $\PP$-a.s. the following identity
	holds for all $t\in[0,T]$,
	\begin{equation*}
		\begin{split}
			&\ip{\rho_{t},\psi}_{L^{2}(\TT^{d})}=\ip{\rho_{0},\psi}_{L^{2}(\TT^{d})}-\int_{0}^{t}\int_{\TT^{d}}\nabla\rho_{s}(x)\cdot\nabla\psi(x)\,\dd x\,\dd s\\
			&\multiquad[4]-\int_{0}^{t}\int_{\TT^{d}}\rho_{s}(x)(\nabla W\ast\rho_{s})(x)\cdot\nabla\psi(x)\,\dd x\,\dd s\\
			&\multiquad[4]-\sqrt{2}\str\sum_{k\in\mathbb{Z}_{0}^{d}}\sum_{j=1}^{d-1}\int_{0}^{t}\int_{\TT^{d}}\rho_{s}(x)\colour_{k}e_{k}(x)a_{k}^{(j)}\cdot\nabla\psi(x)\,\dd x\circ\dd B^{(j)}(s,k)\,.
		\end{split}
	\end{equation*}
\end{definition}
\begin{lemma}\label{lem:well_posedness_SPDE}
	Let $T>0$, $\rho_{0}\in\init{0}$, $W\in C^{2}(\TT^{d})$, $\reg>2$, $\colour\in h^{\reg}(\ZZ^{d}_{0})$ be radially symmetric, $\str>0$, $\flow$ be the stochastic characteristics induced by~\eqref{eq:stochastic_characteristics_SDE} as constructed in Lemma~\ref{lem:RDS_for_stochastic_characteristics} and $\overline{\rho}$ be the weak solution to the path-by-path PDE~\eqref{eq:path_by_path_PDE} driven by~$\flow$ and with initial data~$\rho_{0}$ as constructed in Lemma~\ref{lem:well_posedness_flow_transformation}.
	Then there exists a pathwise-unique weak solution to~\eqref{eq:SPDE}
	in the sense of Definition~\ref{def:weak_solution_SPDE}, which is
	given by 
	\begin{equation}\label{eq:SPDE_solution_via_flow_transformation}
		(\omega,t,x)\mapsto\rho(t,x;\omega,\rho_{0})\defeq\overline{\rho}(t,\flow^{-1}(t,\omega,x);\flow(\omega),\rho_{0})\,.
	\end{equation}
	The map $\rho_{0}\mapsto\rho(\place,\place;\omega,\rho_{0})$ is locally Lipschitz continuous from $\init{0}$ to $C([0,T];\init{0})$ for every $\omega\in\Omega$.
\end{lemma}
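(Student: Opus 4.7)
The plan is to run the flow transformation from Section~\ref{sec:flow_transformation} in reverse. Given the stochastic characteristics $\flow$ constructed in Lemma~\ref{lem:RDS_for_stochastic_characteristics} and the pathwise weak solution $\overline{\rho}$ of~\eqref{eq:path_by_path_PDE} obtained in Lemma~\ref{lem:well_posedness_flow_transformation}, I would define the candidate $\rho$ by~\eqref{eq:SPDE_solution_via_flow_transformation} and verify in turn that (i) it is well defined as a progressively measurable process taking values in $\init{0}$, (ii) it solves~\eqref{eq:SPDE}, and (iii) any weak solution of~\eqref{eq:SPDE} must coincide with it.

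For (i), the key input is that, for each $\omega\in\Omega$, the map $\flow(t,\omega,\place)$ is a volume-preserving $C^{2}$-diffeomorphism, so composition with $\flow^{-1}(t,\omega,\place)$ is an isometry of $L^{2}(\TT^{d})$ that preserves nonnegativity and mass. Combined with $\overline{\rho}(\place,\place;\flow(\omega),\rho_{0})\in C([0,T];\init{0})\cap L^{2}([0,T];H^{1}(\TT^{d}))$, this gives the requested regularity together with $\rho_{t}\in\init{0}$ almost surely for every $t$. Progressive measurability follows because $\omega\mapsto\flow(\place,\omega,\place)$ is $(\mathcal{F}_{t})$-adapted with values in $C([0,T];\diffvol{2})$ (Lemma~\ref{lem:RDS_for_stochastic_characteristics}) and the deterministic solution map $\flow\mapsto\overline{\rho}(\place,\place;\flow,\rho_{0})$ is continuous by Lemma~\ref{lem:well_posedness_flow_transformation_deterministic}, hence Borel measurable.

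For (ii), I would invoke the It\^{o}--Wentzell formula in the form of~\cite[Prop.~2.14]{agresti_sauerbrey_veraar_25}, applied to $\overline{\rho}$ composed with the random diffeomorphism $\flow^{-1}(t,\omega,\place)$. The computation is exactly the one that was carried out in Section~\ref{sec:flow_transformation} to pass from~\eqref{eq:SPDE} to~\eqref{eq:path_by_path_PDE}, run in reverse: the three terms on the right-hand side of~\eqref{eq:path_by_path_PDE} combine with the quadratic covariation produced by the Stratonovich correction in~\eqref{eq:stochastic_characteristics_SDE} to reassemble the Laplacian, the interaction drift and the Stratonovich transport noise of~\eqref{eq:SPDE}. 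The only subtlety is that $\overline{\rho}$ is only a weak solution with $L^{2}([0,T];H^{1}(\TT^{d}))$ regularity; since $\flow(\omega)\in C([0,T];\diffvol{2})$ and $W\in C^{2}(\TT^{d})$, the coefficients $A$, $b$ and the convolution term in~\eqref{eq:path_by_path_PDE} have enough spatial regularity for the distributional It\^{o}--Wentzell formula to be applicable, and a standard mollification argument in the spatial variable (testing against $H^{1}$-functions and passing to the limit) can be used to justify the identity rigorously if needed. Uniqueness (iii) is obtained by reversing the same procedure: given any weak solution $\rho$ to~\eqref{eq:SPDE}, the It\^{o}--Wentzell formula shows that $(t,x)\mapsto\rho(t,\flow(t,\omega,x))$ is, for $\PP$-a.e.\ $\omega$, a weak solution of~\eqref{eq:path_by_path_PDE} driven by $\flow(\omega)$ and with initial data $\rho_{0}$, which by Lemma~\ref{lem:well_posedness_flow_transformation} must equal $\overline{\rho}(\place,\place;\flow(\omega),\rho_{0})$.

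Finally, the local Lipschitz continuity $\rho_{0}\mapsto\rho(\place,\place;\omega,\place)$ from $\init{0}$ to $C([0,T];\init{0})$ is inherited directly from the corresponding deterministic statement in Lemma~\ref{lem:well_posedness_flow_transformation}: pre- and post-composition with the volume-preserving diffeomorphism $\flow^{-1}(t,\omega,\place)$ is an $L^{2}$-isometry, so for every $\omega$,
\begin{equation*}
\norm{\rho(t,\place;\omega,\rho_{0})-\rho(t,\place;\omega,\rho_{0}')}_{L^{2}(\TT^{d})}=\norm{\overline{\rho}(t,\place;\flow(\omega),\rho_{0})-\overline{\rho}(t,\place;\flow(\omega),\rho_{0}')}_{L^{2}(\TT^{d})}\,,
\end{equation*}
and the right-hand side is controlled by $\norm{\rho_{0}-\rho_{0}'}_{L^{2}(\TT^{d})}$ via the deterministic estimate, uniformly on bounded subsets of $\init{0}$. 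The main obstacle I anticipate is the rigorous application of the It\^{o}--Wentzell formula in steps (ii) and (iii) at the level of weak solutions; once this is in place, the rest of the argument is bookkeeping on top of Lemmas~\ref{lem:well_posedness_flow_transformation_deterministic}~and~\ref{lem:RDS_for_stochastic_characteristics}.
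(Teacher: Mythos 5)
Your construction of the candidate solution, the verification that it solves~\eqref{eq:SPDE} via the It\^{o}--Wentzell formula, and the Lipschitz estimate via the $L^{2}$-isometry of volume-preserving diffeomorphisms all match the paper's argument. For pathwise uniqueness, however, you take a genuinely different (and arguably tidier) route: you push an arbitrary weak solution of~\eqref{eq:SPDE} forward through the flow $\flow$, argue via It\^{o}--Wentzell that the resulting process solves the path-by-path PDE~\eqref{eq:path_by_path_PDE}, and then conclude from the already-established uniqueness of that PDE (Lemma~\ref{lem:well_posedness_flow_transformation}). The paper instead proves uniqueness directly at the level of the SPDE, exploiting that divergence-free Stratonovich transport noise preserves the $L^{2}(\TT^{d})$-norm pathwise so that a pathwise energy estimate for the difference of two solutions closes, i.e.\ $L^{2}$-stability. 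Your approach buys reuse of the deterministic uniqueness result and avoids re-deriving the energy estimate, at the cost of needing the It\^{o}--Wentzell formula to apply in the forward direction for an arbitrary weak solution (which, as you note, is the same formal computation already used in Section~\ref{sec:flow_transformation}, so no new obstacle arises). Both arguments are correct; they differ in where the technical work is placed.
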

\begin{proof}
	Let $(t,\omega,x)\mapsto\flow(t,\omega,x)$ be the stochastic characteristics as constructed in Lemma~\ref{lem:RDS_for_stochastic_characteristics}, $\overline{\rho}(t,x;\flow(\omega),\rho_0)$ be the weak solution to the path-by-path PDE~\eqref{eq:path_by_path_PDE} driven by $\flow$ and with initial data $\rho_{0}$ as constructed in Lemma~\ref{lem:well_posedness_flow_transformation} and let $(\omega,t,x)\mapsto\rho(t,x;\omega,\rho_{0})$ be given by~\eqref{eq:SPDE_solution_via_flow_transformation}.

	It then follows by an explicit calculation, that~$\rho$ is a weak solution to~\eqref{eq:SPDE}. To show pathwise uniqueness, it suffices to use the property that transport noise leaves the $L^{2}(\TT^d)$-norm invariant and apply pathwise energy estimates to establish pathwise $L^{2}(\TT^{d})$-stability.
	
	The local Lipschitz continuity of the solution in the initial data for every $\omega\in\Omega$ is a direct consequence of~\eqref{eq:SPDE_solution_via_flow_transformation}, Lemma~\ref{lem:well_posedness_flow_transformation} and Lemma~\ref{lem:RDS_for_stochastic_characteristics}.
\end{proof}
\subsection{Regularity}\label{subsec:regularity_SPDE}
For every $t\geq0$ define the pushforward measure $\rho(t,\place;\place,\rho_{0})_{\#}\mathbb{P}$ acting on $A\in\mathcal{B}(H^{-1}(\mathbb{T}^{d}))$ by
\begin{equation*}
	\rho(t,\place;\place,\rho_{0})_{\#}\mathbb{P}(A)\defeq\int_{\Omega}\mathds{1}_{A}(\rho(t,\place;\omega,\rho_{0}))\PP(\dd\omega)\,.
\end{equation*}
\begin{lemma}[Weak continuity]\label{lem:weak_continuity}
	Let $W\in C^{2}(\TT^{d})$, $\reg>2$, $\colour\in h^{\reg}(\ZZ^{d}_{0})$ be radially symmetric and $\str>0$.
	For every $t\geq0$, it then follows that 
	\begin{equation*}
		\init{0}\ni\rho_{0}\mapsto\rho(t,\place;\place,\rho_{0})_{\#}\mathbb{P}
	\end{equation*}
	is continuous in the sense of weak convergence of measures on $H^{-1}(\mathbb{T}^{d})$.
\end{lemma}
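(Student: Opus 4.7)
The plan is to reduce the weak continuity of the laws to the \emph{pathwise} continuity of the solution in its initial datum, combined with a bounded-convergence argument over $\Omega$.

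First, suppose $\rho_0^n\to\rho_0$ in $\init{0}$, i.e., in the $L^2(\TT^d)$-topology. By Lemma~\ref{lem:well_posedness_SPDE}, for each $\omega\in\Omega$ the map $\rho_0\mapsto\rho(\place,\place;\omega,\rho_0)$ is locally Lipschitz continuous from $\init{0}$ into $C([0,T];\init{0})$. Evaluating at time $t$ and using the continuous embedding $L^2(\TT^d)\embed H^{-1}(\TT^d)$, this gives for every $\omega\in\Omega$,
\begin{equation*}
    \rho(t,\place;\omega,\rho_0^n)\to\rho(t,\place;\omega,\rho_0)\qquad\text{in }H^{-1}(\TT^d)\,.
\end{equation*}
In particular, the convergence is pointwise in $\omega$, which is the key improvement over a mere in-probability statement and is made available for free by the pathwise construction via the flow transformation in Lemma~\ref{lem:well_posedness_SPDE}.

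Second, let $F\from H^{-1}(\TT^d)\to\RR$ be bounded and continuous. By the previous step, $F(\rho(t,\place;\omega,\rho_0^n))\to F(\rho(t,\place;\omega,\rho_0))$ for every $\omega\in\Omega$, and the sequence is uniformly dominated by $\lVert F\rVert_{\infty}$. Dominated convergence yields
\begin{equation*}
    \int_{H^{-1}(\TT^d)}F\,\dd(\rho(t,\place;\place,\rho_0^n)_{\#}\PP)
    =\E\bigl[F(\rho(t,\place;\place,\rho_0^n))\bigr]
    \longrightarrow\E\bigl[F(\rho(t,\place;\place,\rho_0))\bigr]
    =\int_{H^{-1}(\TT^d)}F\,\dd(\rho(t,\place;\place,\rho_0)_{\#}\PP)\,.
\end{equation*}
By the Portmanteau theorem, this is precisely weak convergence of the pushforward measures on $H^{-1}(\TT^d)$.

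I do not foresee a substantive obstacle: the local Lipschitz dependence on initial data supplied by Lemma~\ref{lem:well_posedness_SPDE} already does all the analytic work, and the passage from pathwise convergence to convergence of laws is handled by bounded convergence. The only thing worth checking carefully is that the continuity in $\rho_0$ holds for \emph{every} $\omega$ rather than $\PP$-a.s., but this is automatic from the representation $\rho(t,x;\omega,\rho_0)=\overline{\rho}(t,\flow^{-1}(t,\omega,x);\flow(\omega),\rho_0)$ combined with the volume-preserving property of $\flow^{-1}(t,\omega,\place)$ (so that composition with $\flow^{-1}$ is an $L^2(\TT^d)$-isometry) and the deterministic Lipschitz bound of Lemma~\ref{lem:well_posedness_flow_transformation_deterministic}.
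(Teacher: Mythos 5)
Your proof is correct and takes essentially the same route as the paper: appeal to the pathwise local Lipschitz dependence on initial data from Lemma~\ref{lem:well_posedness_SPDE}, pass to pointwise-in-$\omega$ convergence in $H^{-1}(\TT^d)$ via the continuous embedding $L^2(\TT^d)\embed H^{-1}(\TT^d)$, apply dominated convergence, and conclude by Portmanteau. The only cosmetic difference is that the paper tests against bounded Lipschitz rather than bounded continuous functions; both classes characterize weak convergence of measures on a metric space, so the arguments are interchangeable.
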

\begin{proof}
	Let $f\from H^{-1}(\mathbb{T}^{d})\to\RR$ be bounded and Lipschitz continuous. The (pathwise) local Lipschitz continuity of the solution in its initial
	data (cf.\ Lemma~\ref{lem:well_posedness_SPDE}) combined with the dominated convergence theorem implies that $\mathbb{E}[f(\rho(t,\place;\place,\rho_{0}))]$ is continuous
	in $\rho_{0}\in\init{0}$. The claim then follows by the Portmanteau theorem.
\end{proof}
In the next lemma we show that control over the $H^{-1}(\TT^{d})$-norm of the initial data yields control over the the solution.
\begin{lemma}\label{lem:H-1_bound}
	Let $\rho_{0}\in\init{0}$, $W\in C^{2}(\mathbb{T}^{d})$, $\reg>2$, $\colour\in h^{\reg}(\ZZ_{0}^{d})$ be radially symmetric and $\str>0$. Then there exists a constant $C=C(d,\norm{W}_{C^{2}(\TT^{d})},\norm{\colour}_{h^{\reg}(\ZZ_{0}^{d})},\str)$ such that for all $t>0$,
	\begin{equation*}
		\EE\bigl[\norm{\rho(t,\place;\place,\rho_{0})-\unif}_{\dot{H}^{-1}(\TT^{d})}^{2}\bigr]\leq\EE\bigl[\norm{\rho_{0}-\unif}_{\dot{H}^{-1}(\TT^{d})}^{2}\bigr]\euler^{Ct}
	\end{equation*}
	and for all $T>0$, 
	\begin{equation*}
		\int_{0}^{T}\EE\bigl[\norm{\rho(t,\place;\place,\rho_{0})-\unif}_{L^{2}(\TT^{d})}^{2}\bigr]\,\dd t\leq\EE\bigl[\norm{\rho_{0}-\unif}_{\dot{H}^{-1}(\TT^{d})}^{2}\bigr]\euler^{CT}\,.
	\end{equation*}
\end{lemma}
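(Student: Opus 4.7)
The plan is to subtract the uniform state and carry out an Itô energy estimate in $\dot{H}^{-1}(\TT^{d})$. Set $u(t)\defeq\rho(t,\place;\place,\rho_{0})-\unif$. Because $\nabla W\ast\unif\equiv0$ on $\TT^{d}$ (by periodicity of $W$) and $\vdiv\xi^{\colour}=0$, the uniform state is invariant under the dynamics, so $u$ is mean-free and satisfies
\begin{equation*}
du=\bigl[\Delta u+\Delta W\ast u+\vdiv(u(\nabla W\ast u))\bigr]\,dt+\sqrt{2}\str\,\vdiv(u\circ d\xi^{\colour}).
\end{equation*}

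Next, I would convert the Stratonovich noise to Itô form and apply Itô's formula to $\norm{u}_{\dot{H}^{-1}(\TT^{d})}^{2}$. Since each noise coefficient $\sigma_{k,j}=\sqrt{2}\str\colour_{k}a_{k}^{(j)}e_{k}$ is divergence-free with $a_{k}^{(j)}\perp k$, the Stratonovich--Itô correction $\mathcal{A}u=\tfrac{1}{2}\sum_{k,j}(\sigma_{k,j}\cdot\nabla)^{2}u$ reduces by the radial symmetry of $\colour$ (together with $\sum_{|k|=R}k^{i}k^{l}/|k|^{2}=(\#\{|k|=R\}/d)\delta^{il}$) to a positive multiple of $\Delta u$ with coefficient controlled by $\alpha_{\str}\defeq\str^{2}\tfrac{d-1}{d}\norm{\colour}_{\ell^{2}(\ZZ_{0}^{d})}^{2}$. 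The Laplacian thus contributes a combined dissipation $-(2+\alpha_{\str})\norm{u}_{L^{2}(\TT^{d})}^{2}$ to the drift of $d\norm{u}_{\dot{H}^{-1}(\TT^{d})}^{2}$. The linear interaction satisfies $|\ip{u,\Delta W\ast u}_{\dot{H}^{-1}(\TT^{d})}|\leq\norm{\Delta W}_{L^{1}(\TT^{d})}\norm{u}_{\dot{H}^{-1}(\TT^{d})}^{2}$ by a Fourier estimate. The nonlinear interaction is controlled by duality using the a priori bound $\norm{u}_{L^{1}(\TT^{d})}\leq2$ (because $\rho\geq0$ with $\int\rho=1$), giving $|\ip{u,\vdiv(u(\nabla W\ast u))}_{\dot{H}^{-1}(\TT^{d})}|\leq2\norm{\nabla W}_{L^{\infty}(\TT^{d})}\norm{u}_{L^{2}(\TT^{d})}\norm{u}_{\dot{H}^{-1}(\TT^{d})}$, which by Young's inequality is absorbed into $\epsilon\norm{u}_{L^{2}(\TT^{d})}^{2}+C_{\epsilon}\norm{u}_{\dot{H}^{-1}(\TT^{d})}^{2}$. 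Finally, the quadratic variation is bounded by $\sum_{k,j}\norm{\vdiv(u\sigma_{k,j})}_{\dot{H}^{-1}(\TT^{d})}^{2}\leq\sum_{k,j}\norm{u\sigma_{k,j}}_{L^{2}(\TT^{d})}^{2}\leq C(d,\str,\colour)\norm{u}_{L^{2}(\TT^{d})}^{2}$ using $\norm{\vdiv f}_{\dot{H}^{-1}(\TT^{d})}\leq\norm{f}_{L^{2}(\TT^{d})}$.

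Combining these bounds, taking expectation, and choosing $\epsilon$ small yields an inequality of the form
\begin{equation*}
\tfrac{d}{dt}\EE\norm{u}_{\dot{H}^{-1}(\TT^{d})}^{2}+c\,\EE\norm{u}_{L^{2}(\TT^{d})}^{2}\leq C\,\EE\norm{u}_{\dot{H}^{-1}(\TT^{d})}^{2}
\end{equation*}
for positive constants $c,C$ depending on $d,\norm{W}_{C^{2}(\TT^{d})},\norm{\colour}_{h^{\reg}(\ZZ_{0}^{d})},\str$. Grönwall's inequality yields the first estimate; integrating the differential inequality above on $[0,T]$ and substituting the first estimate gives the second. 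The main technical subtlety is the precise identification of the Stratonovich--Itô correction as a multiple of $\Delta u$ (for which the radial symmetry of $\colour$ is essential) and the careful accounting of the quadratic variation to ensure that the enhanced Laplacian dissipation $-(2+\alpha_{\str})$ dominates the positive contributions from the quadratic variation and the nonlinear term, so that $c>0$.
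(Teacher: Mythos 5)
Your overall strategy — applying Itô's formula to $t\mapsto\norm{\rho(t)-\unif}_{\dot{H}^{-1}(\TT^{d})}^{2}$, followed by an energy estimate and Grönwall — is exactly what the paper does, and your handling of the deterministic terms (subtracting $\unif$, the Fourier bound on $\Delta W\ast u$, and the $\norm{u}_{L^{1}}\leq2$ control of the nonlinear convolution) is correct. However, your accounting of the noise contribution has a genuine gap that breaks precisely for large $\str$, which is the regime where Theorem~\ref{thm:main_result} lives. You treat the Stratonovich--Itô correction ($-\alpha_{\str}\norm{u}_{L^{2}}^{2}$ with $\alpha_{\str}=\str^{2}\tfrac{d-1}{d}\norm{\colour}_{\ell^{2}}^{2}$) and the Itô quadratic variation (bounded by $\sum_{k,j}\norm{u\sigma_{k,j}}_{L^{2}}^{2}$) as two separate, competing terms and claim the former wins. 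But since $\abs{a_{k}^{(j)}}=\abs{e_{k}}=1$ and $j$ runs over $1,\dots,d-1$, one has $\sum_{k,j}\norm{u\sigma_{k,j}}_{L^{2}}^{2}=2\str^{2}(d-1)\norm{\colour}_{\ell^{2}}^{2}\norm{u}_{L^{2}}^{2}=2d\,\alpha_{\str}\norm{u}_{L^{2}}^{2}$, which is \emph{larger} than the Itô dissipation by a factor of order $d$. Under your bookkeeping the net noise contribution to the drift of $\EE\norm{u}_{\dot{H}^{-1}}^{2}$ is therefore $+2(d-1)\alpha_{\str}\norm{u}_{L^{2}}^{2}>0$, which for $\str$ large overwhelms the deterministic Laplacian's $-2\norm{u}_{L^{2}}^{2}$, so your claimed coercivity constant $c>0$ does not hold in general.

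The fix is to exploit the exact cancellation between the Itô correction and the quadratic variation rather than bounding them separately. With $A\defeq(-\Delta)^{-1}$ and $g_{k,j}\defeq\sigma_{k,j}\cdot\nabla u$, and using that each $\sigma_{k,j}\cdot\nabla$ is $L^{2}$-skew-adjoint, one has
\begin{equation*}
	\sum_{k,j}\Bigl[\ip{Au,\sigma_{k,j}\cdot\nabla\,g_{k,j}}+\ip{A\,g_{k,j},g_{k,j}}\Bigr]=-\sum_{k,j}\ip{[\sigma_{k,j}\cdot\nabla,A]u,\,g_{k,j}}\,,
\end{equation*}
so the two leading parts cancel identically (as in the conservation of $\norm{u}_{L^{2}}^{2}$ under pure transport) and only a commutator survives. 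Unlike the crude quadratic-variation bound, this commutator is genuinely lower order: its Fourier symbol involves the difference $\abs{2\uppi\ell}^{-2}-\abs{2\uppi(\ell+k)}^{-2}$, so for $\colour\in h^{\reg}$ with $\reg>2$ it is controlled by $C(\str,\colour,d)\norm{u}_{\dot{H}^{-1/2}}^{2}$, and the interpolation $\norm{u}_{\dot{H}^{-1/2}}^{2}\leq\norm{u}_{\dot{H}^{-1}}\norm{u}_{L^{2}}$ with Young's inequality lets you absorb a small multiple of $\norm{u}_{L^{2}}^{2}$ into the Laplacian dissipation and send the remainder into the Grönwall term. That produces the required $c>0$ for every $\str>0$; your plan is right, but the noise term must go through the commutator, not through separate crude bounds.
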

\begin{proof}
	The claim follows by an application of Itô's formula to the map $t\mapsto\abs{(-\Delta)^{-1/2}(\rho(t,x;\omega,\rho_{0})-1)}^2$, an energy estimate and Gronwall's inequality.
\end{proof}
\subsection{Continuity with respect to the stochastic characteristics}\label{subsec:continuity_SPDE_wrt_stochastic_characteristics}
It is clear from~\eqref{eq:SPDE_solution_via_flow_transformation} and Lemma~\ref{lem:well_posedness_SPDE} that the solution $\rho$ to~\eqref{eq:SPDE} only depends on~$\omega$ through the stochastic characteristics $\flow(\omega)=\flow(\place,\omega,\place)\in C([0,T];\diffvol{2})$. Hence, we define by a slight abuse of notation for every $\flow\in C([0,T];\diffvol{2})$,
\begin{equation*}
	\rho(t,x;\flow,\rho_{0})=\overline{\rho}(t,\flow^{-1}(t,x);\flow,\rho_{0})\,.
\end{equation*}
Next we show that $\rho(\place,\place;\flow,\rho_{0})$ is continuous in $\flow$.
\begin{lemma}\label{lem:continuity_flow_to_SPDE}
	Let $T>0$, $\rho_{0}\in\init{0}$ and $W\in C^{2}(\TT^{d})$.
	Then, it follows that the map $\flow\mapsto\rho(\place,\place;\flow,\rho_{0})$ is continuous from $C([0,T];\diffvol{2})$ to $C([0,T];\init{0})$.
\end{lemma}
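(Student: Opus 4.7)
The plan is to prove continuity by decomposing the representation $\rho(t,\place;\flow,\rho_{0}) = \overline{\rho}(t,\flow^{-1}(t,\place);\flow,\rho_{0})$ into two contributions: one capturing the continuity of the flow-transformed solution $\overline{\rho}$ in $\flow$, and one capturing the continuity of composition with the inverse flow. Fix $\flow \in C([0,T];\diffvol{2})$ and a sequence $\flow^{(n)} \to \flow$ in this space. Writing $\overline{\rho}^{(n)}_{t} \defeq \overline{\rho}(t,\place;\flow^{(n)},\rho_{0})$ and $\overline{\rho}_{t} \defeq \overline{\rho}(t,\place;\flow,\rho_{0})$, I would split
\begin{equation*}
  \overline{\rho}^{(n)}_{t} \circ \flow^{(n),-1}_{t} - \overline{\rho}_{t} \circ \flow^{-1}_{t}
  = \bigl(\overline{\rho}^{(n)}_{t} - \overline{\rho}_{t}\bigr) \circ \flow^{(n),-1}_{t}
  + \bigl(\overline{\rho}_{t} \circ \flow^{(n),-1}_{t} - \overline{\rho}_{t} \circ \flow^{-1}_{t}\bigr)\,.
\end{equation*}

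First I would handle the first term using the volume-preserving change of variables: because $\flow^{(n),-1}_{t} \in \diffvol{2}$, composition with it is an isometry on $L^{2}(\TT^{d})$, hence
\begin{equation*}
  \bigl\|\bigl(\overline{\rho}^{(n)}_{t} - \overline{\rho}_{t}\bigr) \circ \flow^{(n),-1}_{t}\bigr\|_{L^{2}(\TT^{d})} = \bigl\|\overline{\rho}^{(n)}_{t} - \overline{\rho}_{t}\bigr\|_{L^{2}(\TT^{d})}\,,
\end{equation*}
and the right-hand side tends to zero uniformly in $t \in [0,T]$ by the local $1/2$-Hölder continuity of $\flow \mapsto \overline{\rho}(\place,\place;\flow,\rho_{0})$ established in Lemma~\ref{lem:well_posedness_flow_transformation_deterministic}.

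For the second term, the idea is to approximate the time-family $\{\overline{\rho}_{t}\}_{t \in [0,T]}$ uniformly by smooth functions. By the continuity of the spatial inversion map (Appendix~\ref{app:continuity_inversion_map}), $\flow^{(n),-1} \to \flow^{-1}$ in $C([0,T];\diffvol{2})$, in particular uniformly as maps $[0,T] \times \TT^{d} \to \TT^{d}$. Since $\overline{\rho} \in C([0,T];L^{2}(\TT^{d}))$, the family $\{\overline{\rho}_{t}:t \in [0,T]\}$ is compact in $L^{2}(\TT^{d})$, so for any $\eps > 0$ it can be covered by finitely many $L^{2}$-balls of radius $\eps$ around smooth functions $u_{1},\ldots,u_{J} \in C^{\infty}(\TT^{d})$. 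Using the volume-preserving isometry property twice to control the approximation error and the uniform continuity of each $u_{j}$ paired with the uniform convergence of the inverse flows, one obtains
\begin{equation*}
  \sup_{t \in [0,T]}\bigl\|\overline{\rho}_{t} \circ \flow^{(n),-1}_{t} - \overline{\rho}_{t} \circ \flow^{-1}_{t}\bigr\|_{L^{2}(\TT^{d})} \leq 2\eps + \max_{j}\bigl\|u_{j} \circ \flow^{(n),-1} - u_{j} \circ \flow^{-1}\bigr\|_{L^{\infty}([0,T]\times\TT^{d})}\,,
\end{equation*}
and the last term vanishes as $n \to \infty$ by uniform continuity of the $u_{j}$. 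Sending $\eps \to 0$ concludes the argument.

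The main obstacle is that composition with a homeomorphism is not continuous on $L^{2}(\TT^{d})$ with respect to the homeomorphism argument alone when the $L^{2}$ function is merely $L^2$. This is resolved by exploiting the volume-preserving structure, which makes such composition an $L^{2}$-isometry and thereby reduces the task to a smooth-approximation argument on the $L^{2}$-compact family $\{\overline{\rho}_{t}\}_{t \in [0,T]}$. A further mild point is that we need convergence in $C([0,T];\init{0})$ rather than pointwise in $t$; this is handled by that same compactness of $\{\overline{\rho}_{t}\}_{t \in [0,T]}$ together with the uniform-in-$t$ convergence of $\flow^{(n),-1}$ to $\flow^{-1}$ provided by the continuity of inversion.
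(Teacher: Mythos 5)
Your proof is correct and uses the same decomposition and key ingredients as the paper (its eq.~\eqref{eq:undoing_flow_transformation_decomposition}, with the roles of $\flow$ and $\flow_n$ interchanged): the volume-preserving change of variables reduces the term where $\overline{\rho}$ varies to the Hölder continuity of Lemma~\ref{lem:well_posedness_flow_transformation_deterministic}, and a density-of-smooth-functions argument handles the term where the inverse flow varies. The only difference is that you re-derive the content of Lemma~\ref{lem:continuity_CTL2_concatenation_diffeomorphism_flow_inverse} inline via a finite cover of the compact trajectory instead of citing it; note also that inversion maps $C([0,T];\diffvol{2})$ into $C([0,T];\diff{1})$ rather than into $C([0,T];\diffvol{2})$ by Lemma~\ref{lem:inversion_map_continuity_time_dependent}, but this slip is harmless since you only use the uniform ($C^{0}$) convergence of the inverse flows.
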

\begin{proof}
	Since $C([0,T];\diffvol{2})$ is a metric space, it suffices to show sequential
	continuity of the map $\flow\mapsto\rho(\place,\place;\flow,\rho_{0})$. 
	Let $\flow\in C([0,T];\diffvol{2})$
	and $(\flow_{n})_{n\in\mathbb{N}}$ be a sequence such that $\flow_{n}\in C([0,T];\diffvol{2})$
	for every $n\in\mathbb{N}$ and $\flow_{n}\to\flow$ in $C([0,T];\diffvol{2})$
	as $n\to\infty$. We need to show that $\lim_{n\to\infty}\rho(\place,\place;\flow_{n},\rho_{0})=\rho(\place,\place;\flow,\rho_{0})$
	in $C([0,T];\init{0})$.

	We decompose for every $t\in[0,T]$ and $x\in\TT^{d}$,
	\begin{equation}\label{eq:undoing_flow_transformation_decomposition}
		\begin{split}
			&\rho(t,x;\flow,\rho_{0})-\rho(t,x;\flow_{n},\rho_{0})\\
			&=\bigl(\overline{\rho}(t,\flow^{-1}(t,x);\flow,\rho_{0})-\overline{\rho}(t,\flow_{n}^{-1}(t,x);\flow,\rho_{0})\bigr)\\
			&\quad+\bigl(\overline{\rho}(t,\flow_{n}^{-1}(t,x);\flow,\rho_{0})-\overline{\rho}(t,\flow_{n}^{-1}(t,x);\flow_{n},\rho_{0})\bigr)\,.
		\end{split}
	\end{equation}
	Let us consider the first term on the right-hand side of~\eqref{eq:undoing_flow_transformation_decomposition}.
	Since the flow of diffeomorphisms $\flow\in C([0,T];\diffvol{2})$ is fixed and $\flow_{n}\to\flow$
	in $C([0,T];\diff{1})$ as $n\to\infty$, we
	can apply Lemma~\ref{lem:continuity_CTL2_concatenation_diffeomorphism_flow_inverse}
	to deduce
	\begin{equation*}
		\lim_{n\to\infty}\sup_{t\in[0,T]}\norm{\overline{\rho}(t,\flow^{-1}(t,\place);\flow,\rho_{0})-\overline{\rho}(t,\flow_{n}^{-1}(t,\place);\flow,\rho_{0})}_{L^{2}(\TT^{d})}=0\,.
	\end{equation*}

	Next, let us consider the second term in~\eqref{eq:undoing_flow_transformation_decomposition}.
	Using the integral transformation theorem and the Cauchy--Schwarz
	inequality, it follows that
	\begin{equation*}
		\begin{split}
			&\sup_{t\in[0,T]}\lVert\overline{\rho}(t,\flow_{n}^{-1}(t,\place);\flow,\rho_{0})-\overline{\rho}(t,\flow_{n}^{-1}(t,\place);\flow_{n},\rho_{0})\rVert_{L^{2}(\TT^{d})}^{2}\\
			&\leq\lVert\overline{\rho}(\place,\place;\flow,\rho_{0})-\overline{\rho}(\place,\place;\flow_{n},\rho_{0})\rVert_{C([0,T];L^{2}(\TT^{d}))}^{2}\,.
		\end{split}
	\end{equation*}
	Consequently, we can use Lemma~\ref{lem:well_posedness_flow_transformation_deterministic}
	and the convergence $\lim_{n\to\infty}\flow_{n}=\flow$ in $C([0,T];\diffvol{2})$
	to deduce
	\begin{equation*}
		\begin{split}
			&\lim_{n\to\infty}\sup_{t\in[0,T]}\lVert\overline{\rho}(t,\flow_{n}^{-1}(t,\place);\flow,\rho_{0})-\overline{\rho}(t,\flow_{n}^{-1}(t,\place);\flow_{n},\rho_{0})\rVert_{L^{2}(\TT^{d})}^{2}\\
			&=\lim_{n\to\infty}\sup_{t\in[0,T]}\lVert\overline{\rho}(t,\place;\flow,\rho_{0})-\overline{\rho}(t,\place;\flow_{n},\rho_{0})\rVert_{L^{2}(\TT^{d})}^{2}\\
			&=0\,.
		\end{split}
	\end{equation*}
	This yields the claim.
\end{proof}
\subsection{The support theorem for the SPDE}\label{subsec:support_theorem_SPDE}
In this subsection we combine the support theorem for the stochastic characteristics (Theorem~\ref{thm:support_theorem_flow}) with the continuity of the solution map (Lemma~\ref{lem:continuity_flow_to_SPDE}) to show a support theorem for the SPDE~\eqref{eq:SPDE}.
\begin{theorem}\label{thm:support_theorem_McKean_Vlasov}
	Let $T>0$, $\rho_{0}\in \init{0}$, $W\in C^{2}(\TT^{d})$, $\reg>4$, $\colour\in h^{\reg}(\ZZ^{d}_{0})$ be radially symmetric, $\str>0$, and $\flow$ be the stochastic characteristics constructed in Lemma~\ref{lem:RDS_for_stochastic_characteristics}.
	Then, it holds that,
	\begin{equation*}
		\supp(\rho(\place,\place;\flow,\rho_{0}))=\cl\{\rho(\place,\place;\flow_{h},\rho_{0}):h\in\cm(\colour)\}\,,
	\end{equation*}
	where $\cl$ is taken in $C([0,T];\init{0})$ and
	each~$\flow_{h}$ solves~\eqref{eq:flow_skeleton}. In particular, each $\rho(\place,\place;\flow_{h},\rho_{0})$ solves
	\begin{equation}\label{eq:SPDE_skeleton}
		\partial_{t}\rho=\Delta\rho+\vdiv(\rho(\nabla W\ast\rho))+\sqrt{2}\str\vdiv(\rho h)\,,\qquad\rho|_{t=0}=\rho_{0}\,.
	\end{equation}
\end{theorem}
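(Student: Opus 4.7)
The plan is to combine the support theorem for the stochastic characteristics (Theorem~\ref{thm:support_theorem_flow}) with the continuity of the solution map (Lemma~\ref{lem:continuity_flow_to_SPDE}) via a standard pushforward-of-supports argument, and then to identify the deterministic profiles with solutions of~\eqref{eq:SPDE_skeleton}.

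First, I would invoke Theorem~\ref{thm:support_theorem_flow} to conclude that, as a $C([0,T];\diffvol{2})$-valued random variable, the stochastic characteristics $\flow$ satisfy
\[
	\supp(\flow) = \cl_{C([0,T];\diffvol{2})}\{\flow_{h} : h \in \cm(\colour)\}\,.
\]
By Lemma~\ref{lem:continuity_flow_to_SPDE}, the solution map $\Phi\from C([0,T];\diffvol{2}) \to C([0,T];\init{0})$, $\Phi(\flow)\defeq\rho(\place,\place;\flow,\rho_{0})$, is continuous between Polish spaces. A standard two-sided argument then gives $\supp(\Phi_{\#}\law(\flow)) = \cl\,\Phi(\supp(\flow))$: for the inclusion $\subseteq$, any open $V$ of positive $\Phi_{\#}\law(\flow)$-mass pulls back to an open set $\Phi^{-1}(V)$ of positive $\law(\flow)$-mass and must therefore meet $\supp(\flow)$; for $\supseteq$, continuity of $\Phi$ transfers sequential approximation from $\supp(\flow)$ to $\supp(\Phi_{\#}\law(\flow))$. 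Combined with the elementary identity $\cl\,\Phi(\cl A) = \cl\,\Phi(A)$ valid for continuous $\Phi$, applied to $A = \{\flow_{h} : h \in \cm(\colour)\}$, this yields the claimed characterisation of $\supp(\rho(\place,\place;\flow,\rho_{0}))$.

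To conclude, I would verify that each $\rho(\place,\place;\flow_{h},\rho_{0})$ solves~\eqref{eq:SPDE_skeleton}. For $h \in \cm(\colour)$, the deterministic flow $\flow_{h} \in C([0,T];\diffvol{2})$ is generated by the vector field $-\sqrt{2}\str h$, so Lemma~\ref{lem:well_posedness_flow_transformation_deterministic} provides a unique weak solution $\overline{\rho}(\place,\place;\flow_{h},\rho_{0})$ to the path-by-path PDE~\eqref{eq:path_by_path_PDE} driven by $\flow_{h}$. Undoing the flow transformation as in~\eqref{eq:SPDE_solution_via_flow_transformation} and running the chain-rule computation that produced~\eqref{eq:path_by_path_PDE} in reverse — now classical, since the assumption $\reg > 4$ endows $h$ with enough spatial regularity to differentiate — converts the transported equation back into~\eqref{eq:SPDE_skeleton}.

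I expect the main (though modest) obstacle to lie in the pushforward-of-support step: one needs $\Phi$ to be continuous with Polish source and target for the identity $\supp(\Phi_{\#}\mu) = \cl\,\Phi(\supp\mu)$ to hold cleanly. These hypotheses are already in place, so the theorem reduces essentially to the support theorem of Stroock--Varadhan type for the stochastic characteristics in Appendix~\ref{app:stochastic_characteristics} combined with the continuity result of Subsection~\ref{subsec:continuity_SPDE_wrt_stochastic_characteristics}, with the skeleton identification being a short deterministic verification.
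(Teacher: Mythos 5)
Your proposal follows essentially the same route as the paper: Theorem~\ref{thm:support_theorem_flow} for the characteristics, Lemma~\ref{lem:continuity_flow_to_SPDE} for continuity of $\flow\mapsto\rho(\place,\place;\flow,\rho_{0})$, the pushforward-of-supports identity, and the elementary closure identity $\cl\,\Phi(\cl A)=\cl\,\Phi(A)$. The only difference is cosmetic: you re-derive the identity $\supp(\Phi_{\#}\mu)=\cl\,\Phi(\supp\mu)$ inline (correctly, modulo the separability of source and target that is needed for ``complement of support is null'' and is indeed available), whereas the paper cites it directly as Lemma~\ref{lem:support_push_forward} from Appendix~\ref{app:supports_of_measures}.
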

\begin{proof}
	It follows from Lemma~\ref{lem:continuity_flow_to_SPDE} that the
	map $\flow\to\rho(\place,\place;\flow,\rho_{0})$ is continuous from $C([0,T];\diffvol{2})$
	to $C([0,T];\init{0})$. Further, we know from Theorem~\ref{thm:support_theorem_flow},
	that
	\begin{equation*}
		\supp(\flow)=\cl\{\flow_{h}:h\in\cm(\colour)\}\qquad\text{in}\qquad C([0,T];C^{2}(\TT^{d};\TT^{d}))\,.
	\end{equation*}
	Hence, an application of Lemma~\ref{lem:support_push_forward} yields
	\begin{equation*}
		\supp(\rho(\place,\place;\flow,\rho_{0}))=\cl\{\rho(\place,\place;\flow,\rho_{0}):\flow\in\cl\{\flow_{h}:h\in\cm(\colour)\}\}\,.
	\end{equation*}
	Using once again that the map $\flow\mapsto\rho(\place,\place;\flow,\rho_{0})$ is continuous, we obtain
	\begin{equation*}
		\begin{split}
			&\cl\{\rho(\place,\place;\flow,\rho_{0}):\flow\in\cl\{\flow_{h}:h\in\cm(\colour)\}\}\\
			&=\cl\{\rho(\place,\place;\flow_{h},\rho_{0}):h\in\cm(\colour)\}\,.
		\end{split}
	\end{equation*}
	By the definition of $\flow_{h}$, we obtain that each $\rho(\place,\place;\flow_{h},\rho_{0})$ is a solution to the PDE~\eqref{eq:SPDE_skeleton},
	which yields the claim.
\end{proof}
\subsection{Existence as a random dynamical system}\label{subsec:RDS_for_SPDE}
Let $(\Omega,\mathcal{F}^{0},\PP,(\shift_{t})_{t\in\RR})$ be the
MDS constructed in Section~\ref{sec:prelim}. 
It then follows by Lemma~\ref{lem:RDS_for_stochastic_characteristics}
that the stochastic characteristics induced by~\eqref{eq:stochastic_characteristics_SDE}
generate an RDS~$\flow$ over $(\Omega,\mathcal{F}^{0},\PP,(\shift_{t})_{t\in\RR})$. In this subsection we use the continuity of the solution map of the SPDE~\eqref{eq:SPDE} with respect to the stochastic characteristics (Lemma~\ref{lem:continuity_flow_to_SPDE}) to argue that~\eqref{eq:SPDE} generates
an RDS $\rds\from[0,\infty)\times\Omega\times\init{0}\to\init{0}$
over the same MDS. 
\begin{theorem}\label{thm:RDS_for_SPDE}
	Let $W\in C^{2}(\TT^{d})$, $\reg>2$, $\colour\in h^{\reg}(\ZZ_{0}^{d})$ be radially symmetric and $\str>0$. Then there exists a unique (up to indistinguishability) continuous RDS $\rds\from[0,\infty)\times\Omega\times\init{0}\to\init{0}$ over $(\Omega,\mathcal{F}^{0},\PP,(\shift_{t})_{t\in\RR})$ which is a pathwise weak solution to the SPDE~\eqref{eq:SPDE} in the sense of Definition~\ref{def:weak_solution_SPDE}.
\end{theorem}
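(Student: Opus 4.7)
The plan is to define $\rds(t,\omega,\rho_{0})\defeq\rho(t,\place;\omega,\rho_{0})$, where $\rho$ is the pathwise-unique weak solution of~\eqref{eq:SPDE} from Lemma~\ref{lem:well_posedness_SPDE}, represented via the flow transformation as $\rho(t,x;\omega,\rho_{0})=\overline{\rho}(t,\flow^{-1}(t,\omega,x);\flow(\omega),\rho_{0})$. Uniqueness up to indistinguishability is then immediate from the pathwise uniqueness in Lemma~\ref{lem:well_posedness_SPDE}, so the task reduces to verifying the three defining properties of an RDS: joint measurability, the cocycle identity, and pathwise continuity in $(t,\rho_{0})$.

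For joint measurability, I would first invoke Lemma~\ref{lem:RDS_for_stochastic_characteristics} to obtain that, for every $T>0$, the map $\omega\mapsto\flow(\omega)|_{[0,T]}\in C([0,T];\diffvol{2})$ is $(\mathcal{F}^{0},\mathcal{B}(C([0,T];\diffvol{2})))$-measurable. Combining Lemma~\ref{lem:well_posedness_flow_transformation_deterministic} (local Lipschitz in $\rho_{0}$, local Hölder in $\flow$) with Lemma~\ref{lem:continuity_flow_to_SPDE} yields that $(\flow,\rho_{0})\mapsto\rho(\place,\place;\flow,\rho_{0})$ is jointly continuous from the Polish space $C([0,T];\diffvol{2})\times\init{0}$ into $C([0,T];\init{0})$. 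Composing with $(\omega,\rho_{0})\mapsto(\flow(\omega),\rho_{0})$ and then evaluating at a variable time $t$ gives a $\bigl(\mathcal{B}([0,\infty))\otimes\mathcal{F}^{0}\otimes\mathcal{B}(\init{0}),\mathcal{B}(\init{0})\bigr)$-measurable map. The delicate step here — that jointly measurable versions with respect to $\mathcal{F}^{0}$ (rather than its completion) exist — is the content of Appendix~\ref{app:joint_measurability}.

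For the cocycle identity, the case $t=0$ is immediate from $\rho(0,\place;\omega,\rho_{0})=\rho_{0}$. For $s,t\geq 0$, I would exploit that Lemma~\ref{lem:RDS_for_stochastic_characteristics} provides the characteristics cocycle $\flow(s+t,\omega,x)=\flow(t,\shift_{s}\omega,\flow(s,\omega,x))$ together with the corresponding identity for the spatial inverses. Since the flow-transformed PDE~\eqref{eq:path_by_path_PDE} is deterministically driven by the path of $\flow$, restarting~\eqref{eq:SPDE} at time $s$ from $\rds(s,\omega,\rho_{0})$ produces a weak solution on $[s,s+t]$ driven by the shifted characteristics $\flow(\place,\shift_{s}\omega,\place)$; the pathwise uniqueness of Lemma~\ref{lem:well_posedness_SPDE} then forces it to agree with $\rds(s+t,\omega,\rho_{0})$ once the flow transformation is undone. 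The verification amounts to substituting the cocycle identity for $\flow^{\pm1}$ into~\eqref{eq:SPDE_solution_via_flow_transformation}.

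Pathwise continuity of $(t,\rho_{0})\mapsto\rds(t,\omega,\rho_{0})$ then follows from the joint continuity asserted above, since the evaluation map $C([0,T];\init{0})\times[0,T]\to\init{0}$, $(u,t)\mapsto u(t)$ is continuous, and the local Lipschitz bound in $\rho_{0}$ is uniform in $t\in[0,T]$. The main obstacle I expect is the joint measurability: continuity of the solution map in each argument separately must be upgraded to a version that is jointly measurable in $(\omega,\rho_{0})$ with respect to the uncompleted $\sigma$-algebra $\mathcal{F}^{0}$, which is precisely what an RDS requires and what Appendix~\ref{app:joint_measurability} is designed to handle. Once that is settled, the cocycle property and pathwise continuity are bookkeeping consequences of Lemmas~\ref{lem:RDS_for_stochastic_characteristics}, \ref{lem:well_posedness_SPDE} and~\ref{lem:continuity_flow_to_SPDE}.
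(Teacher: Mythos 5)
Your proposal is correct and follows essentially the same route as the paper: define $\rds$ via the flow transformation driven by the characteristics RDS of Lemma~\ref{lem:RDS_for_stochastic_characteristics}, deduce the perfect cocycle property from the characteristics' cocycle together with the pathwise (deterministic-in-$\omega$) well-posedness of the transformed PDE, and defer joint $(t,\omega,\rho_0)$-measurability to Appendix~\ref{app:joint_measurability}. The paper dismisses the cocycle verification as a ``tedious but elementary calculation,'' and your sketch of restarting the equation from $\rds(s,\omega,\rho_{0})$ and invoking uniqueness is precisely what that calculation carries out, while for measurability the appendix proceeds by two Carathéodory-type arguments rather than the joint continuity you describe, though both lead to the same conclusion.
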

\begin{proof}
	Let $\flow$ be the RDS for the stochastic characteristics constructed in Lemma~\ref{lem:RDS_for_stochastic_characteristics}.
	For every $\omega\in\Omega$ and $\rho_{0}\in\init{0}$, denote
	by $(t,x)\mapsto\overline{\rho}(t,x;\flow(\omega),\rho_{0})$ the
	solution to the path-by-path PDE~\eqref{eq:path_by_path_PDE} driven by $\flow(\omega)\in C([0,\infty);\diffvol{2})$ and started from $\rho_{0}$ (for the well-posedness of the equation see Lemma~\ref{lem:well_posedness_flow_transformation}). Define
	\begin{equation*}
		\rds(t,\omega,\rho_{0})(x)\defeq\overline{\rho}(t,\flow^{-1}(t,\omega,x);\flow(\omega),\rho_{0})\,.
	\end{equation*}
	An application of Lemma~\ref{lem:well_posedness_SPDE} shows that~$\rds$ is the pathwise weak solution to~\eqref{eq:SPDE}.

	It follows by a tedious but elementary calculation that the  perfect cocycle property holds, that is, for all $s,t\in[0,\infty)$, $\omega\in\Omega$ and $\rho_0\in \init{0}$, it holds that
	\begin{equation*}
		\rds(t,\shift_{s}\omega,\rds(s,\omega,\rho_{0}))=\rds(t+s,\omega,\rho_{0})\,.
	\end{equation*}
	The $\bigl(\mathcal{B}([0,\infty))\otimes\mathcal{F}^{0}\otimes\mathcal{B}(\init{0}),\mathcal{B}(\init{0})\bigr)$-measurability
	of the map $P\from[0,\infty)\times\Omega\times \init{0}\to \init{0}$
	follows by Lemma~\ref{lem:joint_measurability_for_RDS} in Appendix~\ref{app:joint_measurability}.
\end{proof}
\section{Uniqueness of the invariant probability measure}
\label{sec:uniqueness_invariant_measure}
In this section we show for sufficiently large noise intensities $\str>0$ that the SPDE~\eqref{eq:SPDE} admits a unique invariant probability measure, which is given by the Dirac measure concentrated at the uniform state~$\unif$. To this end, we first show in Subsection~\ref{subsec:Lyapunov_exponents_stable_manifold} that the top Lyapunov exponent associated to the random dynamical system for~\eqref{eq:SPDE} is strictly negative, which yields the existence of a stable manifold around~$\unif$. We then argue in Subsection~\ref{subsec:reachability} that there exists a positive probability of reaching an arbitrary small $H^{-1}(\TT^{d})$-neighbourhood around~$\unif$ in finite time. In Subsection~\ref{subsec:regularisation} we then use the smoothing properties of the SPDE~\eqref{eq:SPDE} to deduce that any trajectory can be brought into an arbitrary small $L^{2}(\TT^{d})$-neighbourhood around~$\unif$ with positive probability, conditioned on the event that it was in a sufficiently small $H^{-1}(\TT^{d})$-neighbourhood around~$\unif$ at some previous time. Finally, we combine these properties in Subsection~\ref{subsec:uniqueness} to deduce the uniqueness of the invariant probability measure for~\eqref{eq:SPDE}.
\subsection{Lyapunov exponents and stable manifold theorem}\label{subsec:Lyapunov_exponents_stable_manifold}
Let $\rds\from[0,\infty)\times\Omega\times\init{0}\to\init{0}$ be the random dynamical system for the SPDE~\eqref{eq:SPDE} constructed in Theorem~\ref{thm:RDS_for_SPDE}. An application of Kingman's subadditive ergodic theorem yields the $\PP$-almost sure existence of the top Lyapunov exponent at the uniform state~$\unif$, that is,
\begin{equation}\label{eq:top_Lyapunov_exponent}
	\lambda_{\textnormal{top}}(\omega)\defeq\lim_{t\to\infty}\frac{1}{t}\log\norm{\D P(t,\omega,\unif)}_{L(L^{2}_{0}(\TT^{d}))}<\infty\,.
\end{equation}
In this subsection we show that there exists some $\lambda<0$ such that $\PP$-a.s.
\begin{equation*}
	\lambda_{\textnormal{top}}(\omega)<\lambda\,.
\end{equation*}
Let $\lin_{0}\in L^{2}_{0}(\TT^{d})$. An explicit calculation yields that 
\begin{equation*}
	\lin(t,x;\omega,\lin_{0})\defeq(\D\rds(t,\omega,\unif)\lin_{0})(x)
\end{equation*}
is given by the linearisation of~\eqref{eq:SPDE} around the uniform state $\unif$,
\begin{equation}\label{eq:linearisation}
	\begin{cases}
		\begin{aligned}
			\partial_{t}\lin&=\Delta \lin+\Delta W\ast \lin+\sqrt{2}\str\nabla \lin\circ\xi^{\colour}&&\quad t\in(0,T]\,,\, x\in\TT^{d}\\
			\lin\tzero&=\lin_{0}&&\quad x\in\TT^{d}
		\end{aligned}
	\end{cases}
	\,.
\end{equation}
Let $\eta\in(0,1)$ and recall from~\eqref{eq:maximal_growth_rate} (resp.~\eqref{eq:dimension_constant}) the definition of the constant~$C_{W}^{(\eta)}$ (resp.~$C_{d}$). 
\begin{theorem}\label{thm:L2_decay_almost_surely_energy_spectrum} 
	Let $W\in L^{1}(\TT^{d})$, $\eta\in(0,1)$ and $\colour\in\ell^{2}(\ZZ_{0}^{d})$ be radially symmetric. Assume that $\str>0$ satisfies
	\begin{equation*}
		C_{W}^{(\eta)}-(1-\eta)<\norm{\colour}_{h^{-1}(\ZZ^{d}_{0})}^{2}C_{d}\str^2\,.
	\end{equation*}
	Then for all $\gamma>0$ such that
	\begin{equation*}
		0<\gamma<-C_{W}^{(\eta)}+(1-\eta)+\norm{\colour}_{h^{-1}(\ZZ^{d}_{0})}^{2}C_{d}\str^2\,,
	\end{equation*}
	there exists a random constant $D=D(\omega)$ depending on $d$, $\eta$, $\str$, $\gamma$, $C_{W}^{(\eta)}$ and $\norm{\colour}_{h^{-1}(\ZZ^{d}_{0})}$,
	such that for any $\lin_{0}\in L^{2}_{0}(\mathbb{T}^{d})$, 
	\begin{equation*}
		\lVert\lin(t,\place;\omega,\lin_{0})\rVert_{L^{2}(\TT^{d})}^{2}\leq D(\omega)\euler^{-2(2\uppi)^2\gamma t}\lVert \lin_{0}\rVert_{L^{2}(\TT^{d})}^{2}\,,\qquad\mathbb{P}\text{-a.s.\ in }\omega\in\Omega\,.
	\end{equation*}
\end{theorem}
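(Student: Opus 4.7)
The plan is to apply Itô's formula to suitable logarithmic energy functionals and to exploit the key algebraic fact that, since $\xi^{\colour}$ is divergence-free, the stochastic transport term preserves the $L^{2}(\TT^{d})$-norm \emph{pathwise}. First I would pass to the Itô form of~\eqref{eq:linearisation}, which introduces a Stratonovich--Itô drift correction proportional to $\str^{2}\tfrac{d-1}{d}\norm{\colour}_{\ell^{2}(\ZZ_{0}^{d})}^{2}\Delta\lin$. Applying Itô's formula to $\norm{\lin_{t}}_{L^{2}(\TT^{d})}^{2}$, the martingale term $2\ip{\lin,\sqrt{2}\str\nabla\lin\cdot\dd\xi^{\colour}}_{L^{2}(\TT^{d})}$ vanishes by integration by parts (using $\vdiv\xi^{\colour}=0$), and the quadratic variation of $\lin$ cancels exactly against the Stratonovich--Itô correction. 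This yields the \emph{pathwise} identity
\[
\norm{\lin_{t}}_{L^{2}(\TT^{d})}^{2}=\norm{\lin_{0}}_{L^{2}(\TT^{d})}^{2}\exp\Bigl(-2\int_{0}^{t}\mathcal{R}(\lin_{s})\,\dd s\Bigr),\qquad\mathcal{R}(\lin)\defeq\frac{\sum_{m\in\ZZ_{0}^{d}}\abs{2\uppi m}^{2}(1+\hat{W}(m))\abs{\hat{\lin}(m)}^{2}}{\norm{\lin}_{L^{2}(\TT^{d})}^{2}},
\]
so it suffices to show that $\int_{0}^{t}\mathcal{R}(\lin_{s})\,\dd s\geq(2\uppi)^{2}\gamma\,t-C(\omega)$ for some $\PP$-a.s.\ finite $C(\omega)$ independent of $\lin_{0}$.

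A straightforward splitting across $\Lambda_{W}^{(\eta)}$ yields only $\mathcal{R}(\lin)\geq(2\uppi)^{2}\bigl((1-\eta)\sum_{m}\abs{m}^{2}\mu_{m}(\lin)-C_{W}^{(\eta)}\bigr)$ with $\mu_{m}(\lin)=\abs{\hat{\lin}(m)}^{2}/\norm{\lin}_{L^{2}(\TT^{d})}^{2}$, which is insufficient in the unstable regime $C_{W}^{(\eta)}>1-\eta$. To extract the missing $\str^{2}\norm{\colour}_{h^{-1}(\ZZ_{0}^{d})}^{2}C_{d}$ enhancement, I would run Itô concurrently on $\log\norm{\lin_{t}}_{\dot{H}^{-1}(\TT^{d})}^{2}$; there the Stratonovich correction and the quadratic variation no longer cancel, and the drift picks up the non-negative mixing contribution $2\str^{2}\sum_{l}\abs{\hat{\lin}(l)}^{2}\sum_{k}\colour_{k}^{2}\abs{P_{k}l}^{2}/\abs{l+k}^{2}$ from the martingale part of the transport noise. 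The analytic heart of the argument is then the lattice inequality
\[
\sum_{k\in\ZZ_{0}^{d}}\colour_{k}^{2}\frac{\abs{P_{k}l}^{2}}{\abs{l+k}^{2}}\geq C_{d}\,\norm{\colour}_{h^{-1}(\ZZ_{0}^{d})}^{2}\abs{l}^{2}\qquad\text{for every }l\in\ZZ_{0}^{d},
\]
valid for radially symmetric $\colour$ with the dimensional constant from~\eqref{eq:dimension_constant}. I expect this shell-by-shell estimate --- separating the contributions $\abs{k}\lesssim\abs{l}$ and $\abs{k}\gtrsim\abs{l}$ and carefully tracking the loss when $k\approx-l$ (where $\abs{l+k}$ degenerates while $\abs{P_{k}l}$ remains of size $\abs{l}$) --- to be the main obstacle, and the dimensional case-split in $C_{d}$ strongly suggests independent, sharper computations are needed in $d=2,3$ where the tail sum over $\abs{k}\gtrsim\abs{l}$ gives less room.

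Combining the two Itô evolutions via the Poincaré-type comparison $\norm{\lin}_{\dot{H}^{-1}(\TT^{d})}^{2}\leq(2\uppi)^{-2}\norm{\lin}_{L^{2}(\TT^{d})}^{2}$ on $L^{2}_{0}(\TT^{d})$ --- which converts the mixing gain in the $\dot{H}^{-1}$-drift into a lower bound on the time-averaged Fourier-mass dispersion $t^{-1}\int_{0}^{t}\sum_{m}\abs{m}^{2}\mu_{m}(\lin_{s})\,\dd s$ --- would produce the required lower bound for $\int_{0}^{t}\mathcal{R}(\lin_{s})\,\dd s$ up to a continuous martingale $M_{t}(\omega)$. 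Its quadratic variation grows at most linearly in $t$, so by the Burkholder--Davis--Gundy inequality and the strong law of large numbers for continuous martingales, $t^{-1}M_{t}\to0$ $\PP$-a.s. Setting $D(\omega)\defeq\exp(2\sup_{t\geq0}(M_{t}(\omega)-\eps t))$ with a small $\eps>0$ absorbed into the gap between $\gamma$ and the upper bound in the hypothesis then yields the claimed exponential decay at rate $2(2\uppi)^{2}\gamma$, almost surely, with a random prefactor independent of $\lin_{0}$.
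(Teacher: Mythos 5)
Your approach is genuinely different from the paper's. The paper's proof is a short citation to Luo--Tang--Zhao~\cite[Thm.~1.3]{luo_tang_zhao_24}: one first proves exponential decay of the second moment $\EE[\|\lin_t\|_{L^2(\TT^d)}^2]$ (after incorporating the lower-order convolution $\Delta W*\lin$) and then upgrades to an almost-sure bound by Borel--Cantelli. You instead propose a pathwise/Lyapunov-exponent argument, combining the exact $L^2$-identity $\|\lin_t\|_{L^2}^2=\|\lin_0\|_{L^2}^2\exp\bigl(-2\int_0^t\mathcal R(\lin_s)\,\dd s\bigr)$ with a pathwise lower bound on the time-averaged dissipation rate, and closing with the martingale strong law of large numbers. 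The pathwise identity itself is correct, as is the observation that the split across $\Lambda_W^{(\eta)}$ alone is insufficient in the unstable regime.

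However, the remainder has two substantial gaps. First, the proposed lattice inequality
\begin{equation*}
\sum_{k\in\ZZ_{0}^{d}}\colour_{k}^{2}\frac{\abs{P_{k}l}^{2}}{\abs{l+k}^{2}}\geq C_{d}\,\norm{\colour}_{h^{-1}(\ZZ_{0}^{d})}^{2}\abs{l}^{2}
\end{equation*}
cannot hold for all $l\in\ZZ_0^d$: with $\colour$ supported on $\{\abs{k}\lesssim 1\}$ the left-hand side tends to $\tfrac{d-1}{d}\norm{\colour}_{\ell^2}^2$ as $\abs{l}\to\infty$ (since $\abs{P_k l}^2/\abs{k+l}^2\to 1-\cos^2\angle(k,l)$), whereas the right-hand side diverges like $\abs{l}^2$. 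Second, the quantity you single out, $2\str^2\sum_l\abs{\hat\lin(l)}^2\sum_k\colour_k^2\abs{P_k l}^2/\abs{k+l}^2$, is precisely the quadratic-variation contribution to the Itô drift of $\|\lin_t\|_{\dot H^{-1}(\TT^d)}^2$, which enters with a \emph{positive} sign and therefore opposes the decay of the negative Sobolev norm; calling it a ``non-negative mixing contribution'' misattributes its role. The actual enhancement is the gap between the Stratonovich--Itô correction (whose contribution to $\dd\|\lin\|_{\dot H^{-1}}^2$ is $-2\str^2\tfrac{d-1}{d}\norm{\colour}_{\ell^2}^2\|\lin\|_{L^2}^2\,\dd t$) and that quadratic variation, i.e.\ per mode $l$
\begin{equation*}
2\str^2\sum_{k\in\ZZ_0^d}\colour_k^2\abs{P_k l}^2\Bigl(\frac{1}{\abs{l}^2}-\frac{1}{\abs{k+l}^2}\Bigr)\,,
\end{equation*}
an oscillating-sign sum which is bounded in $l$ rather than growing like $\abs{l}^2$. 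Its lower bound by $C_d\norm{\colour}_{h^{-1}}^2$ is the genuine analytic core of the Luo--Tang--Zhao argument and requires symmetrizing over $k\leftrightarrow-k$ and carefully balancing the shells $\abs{k}\lesssim\abs{l}$ against $\abs{k}\gtrsim\abs{l}$. Without identifying the correct expression and correct scaling, the proposed lower bound on $\int_0^t\mathcal R(\lin_s)\,\dd s$ does not materialize, so the closing SLLN step has nothing to feed on.
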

\begin{proof}
	Upon adding the lower-order term given by the convolution against $\Delta W$, it follows by the techniques of~\cite[Thm.~1.3]{luo_tang_zhao_24}, that there exists some $C>0$ such that,
	\begin{equation*}
		\begin{split}
			&\mathbb{E}[\lVert\lin(t,\place;\place,\lin_{0})\rVert_{L^{2}(\TT^{d})}^{2}]\leq C\frac{(1-\eta)+\norm{\colour}_{h^{-1}(\ZZ^{d}_{0})}^{2}C_{d}\str^{2}}{1-\eta}\\
			&\quad\times\exp\Bigl(2(2\uppi)^{2}\Bigl(C_{W}^{(\eta)}-(1-\eta)-\norm{\colour}_{h^{-1}(\ZZ^{d}_{0})}^{2}C_{d}\str^{2}\Bigr)t\Bigr)\lVert \lin_{0}\rVert_{L^{2}(\TT^{d})}^{2}\,.
		\end{split}
	\end{equation*}
	A Borel--Cantelli argument then allows us to conclude, for details see~\cite[Thm.~1.3]{luo_tang_zhao_24}.
\end{proof}
\begin{theorem}\label{thm:Lyapunov_exponent_negative_energy_spectrum}
	Let $W\in L^{1}(\TT^{d})$, $\eta\in(0,1)$ and $\colour\in \ell^{2}(\ZZ_{0}^{d})$ be radially symmetric. Assume that $\str>0$ satisfies
	\begin{equation*}
		C_{W}^{(\eta)}-(1-\eta)<\norm{\colour}_{h^{-1}(\ZZ^{d}_{0})}^{2}C_{d}\str^{2}
	\end{equation*}
	and let $\gamma$ be such that 
	\begin{equation*}
		0<\gamma<-C_{W}^{(\eta)}+(1-\eta)+\norm{\colour}_{h^{-1}(\ZZ^{d}_{0})}^{2}C_{d}\str^{2}\,,
	\end{equation*}
	where $C_{W}^{(\eta)}$ (resp.~$C_{d}$) is the constant defined in~\eqref{eq:maximal_growth_rate} (resp.~\eqref{eq:dimension_constant}). For $\lambda\defeq-(2\uppi)^{2}\gamma$, it then follows that,
	\begin{equation*}
		\limsup_{t\to\infty}\frac{1}{t}\log\norm{\D\rds(t,\omega,\unif)}_{L(L^{2}_{0}(\TT^{d}))}<\lambda\,,\qquad\mathbb{P}\text{-a.s.\ in }\omega\in\Omega\,.
	\end{equation*}
\end{theorem}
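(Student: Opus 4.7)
The plan is to reduce the Lyapunov estimate to Theorem~\ref{thm:L2_decay_almost_surely_energy_spectrum} applied to the linearised flow, with an $\varepsilon$-trick to promote the decay rate into a strict inequality. The first ingredient, stated in the preamble to the theorem, is the identification
\begin{equation*}
    \D\rds(t,\omega,\unif)\lin_0 = \lin(t,\place;\omega,\lin_0)\,,
\end{equation*}
where $\lin$ solves the linearised SPDE~\eqref{eq:linearisation}. I would justify this by differentiating the cocycle at its fixed point~$\unif$: the local Lipschitz continuity of $\rho_0\mapsto\rds(\place,\omega,\rho_0)$ (Lemma~\ref{lem:well_posedness_SPDE}) combined with the formal expansion $\rho=\unif+\varepsilon\lin+o(\varepsilon)$ in~\eqref{eq:SPDE} forces the leading-order term to satisfy~\eqref{eq:linearisation}. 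The Fréchet differentiability itself can be verified at the level of the flow transformation~\eqref{eq:path_by_path_PDE} (for which $\unif$ is again a fixed point, since $\flow(\omega)$ is volume-preserving) using the Galerkin/monotonicity estimates already employed in Lemma~\ref{lem:well_posedness_flow_transformation_deterministic}.

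Next, I would exploit the strictness of the hypothesis $C_{W}^{(\eta)}-(1-\eta)<\norm{\colour}_{h^{-1}(\ZZ^d_0)}^2 C_d\str^2$ by choosing an auxiliary
\begin{equation*}
    \gamma'\in\bigl(\gamma,\,-C_{W}^{(\eta)}+(1-\eta)+\norm{\colour}_{h^{-1}(\ZZ^d_0)}^2 C_d\str^2\bigr)\,,
\end{equation*}
which is non-empty by the admissible range of $\gamma$. Theorem~\ref{thm:L2_decay_almost_surely_energy_spectrum} applied with this $\gamma'$ then yields a $\PP$-full measure set $\Omega'$ and a finite random constant $D(\omega)$ (crucially independent of $\lin_0$) such that, for every $\omega\in\Omega'$ and $\lin_0\in L^2_0(\TT^d)$,
\begin{equation*}
    \norm{\lin(t,\place;\omega,\lin_0)}_{L^2(\TT^d)}^2 \leq D(\omega)\,\euler^{-2(2\uppi)^2\gamma' t}\norm{\lin_0}_{L^2(\TT^d)}^2\,.
\end{equation*}
Since $\D\rds(t,\omega,\unif)$ is a bounded linear operator on $L^2_0(\TT^d)$, taking the supremum over $\lin_0$ with $\norm{\lin_0}_{L^2(\TT^d)}=1$ upgrades this to the operator-norm estimate $\norm{\D\rds(t,\omega,\unif)}_{L(L^2_0(\TT^d))}^2 \leq D(\omega)\,\euler^{-2(2\uppi)^2\gamma' t}$ on $\Omega'$.

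Taking logarithms, dividing by $t$ and passing to the limit then gives, uniformly on~$\Omega'$,
\begin{equation*}
    \limsup_{t\to\infty}\frac{1}{t}\log\norm{\D\rds(t,\omega,\unif)}_{L(L^2_0(\TT^d))} \leq -(2\uppi)^2\gamma' < -(2\uppi)^2\gamma = \lambda\,,
\end{equation*}
which is the required strict inequality. The main technical obstacle is step one, namely the rigorous identification of $\D\rds(t,\omega,\unif)$ with the linearised flow on $L^2_0(\TT^d)$; the $\varepsilon$-trick is essential to obtain strict inequality but otherwise straightforward, and the final $\limsup$ step is immediate once the operator-norm bound is in hand.
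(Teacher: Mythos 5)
Your proposal is correct and follows the same route as the paper: identify $\D\rds(t,\omega,\unif)\lin_0$ with the solution of the linearised SPDE~\eqref{eq:linearisation}, invoke Theorem~\ref{thm:L2_decay_almost_surely_energy_spectrum}, take the supremum over unit $\lin_0$ to pass to the operator norm, take logarithms, divide by $t$, and send $t\to\infty$. The one genuine improvement in your write-up is the auxiliary $\gamma'\in(\gamma,\,-C_W^{(\eta)}+(1-\eta)+\norm{\colour}_{h^{-1}}^2 C_d\str^2)$: the paper applies Theorem~\ref{thm:L2_decay_almost_surely_energy_spectrum} directly with $\gamma$ and therefore only delivers $\limsup\leq -(2\uppi)^2\gamma=\lambda$, not the strict inequality $<\lambda$ asserted in the statement; your $\gamma'$ trick is exactly what is needed to close that gap. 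You also correctly flag the implicit Fréchet-differentiability step (the identification of $\D\rds(t,\omega,\unif)$ with the linearised flow), which the paper dismisses as an ``explicit calculation'' in the preamble; your sketch via the volume-preserving flow transformation and the monotonicity estimates of Lemma~\ref{lem:well_posedness_flow_transformation_deterministic} is a reasonable way to make it rigorous, though you should note that one needs differentiability in the $L^2_0(\TT^d)$-norm, not merely the Lipschitz continuity recorded in Lemma~\ref{lem:well_posedness_SPDE}, so this step requires a genuine (if routine) additional estimate beyond what the paper's lemmas state verbatim.
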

\begin{proof}
	For every $\lin_0\in L^{2}_{0}(\TT^{d})$, let $(\omega,t,x)\mapsto \lin(t,x;\omega,\lin_0)$ be the solution to~\eqref{eq:linearisation}. An application of Theorem~\ref{thm:L2_decay_almost_surely_energy_spectrum}
	yields the existence of a random constant~$D$ independent of~$\lin_{0}$, such that $\PP$-a.s.\ in $\omega\in\Omega$,
	\begin{equation*}
		\lVert \lin(t,\place;\omega,\lin_0)\rVert_{L^{2}(\TT^{d})}^{2}\leq D(\omega)\euler^{-2(2\uppi)^{2}\gamma t}\lVert \lin_{0}\rVert_{L^{2}(\TT^{d})}^{2}\,,
	\end{equation*}
	which yields
	\begin{equation*}
		\begin{split}
			\norm{\D\rds(t,\omega,\unif)}_{L(L^{2}_{0}(\TT^{d}))}&=\sup_{\substack{\lin_{0}\in L^{2}_{0}(\TT^{d})\\\norm{\lin_{0}}_{L^{2}(\TT^{d})}=1}}\lVert \lin(t,\place;\omega,\lin_{0})\rVert_{L^{2}(\TT^{d})}\\
			&\leq D(\omega)^{1/2}\euler^{-(2\uppi)^{2}\gamma t}\,.
		\end{split}
	\end{equation*}
	Taking the logarithm, we obtain
	\begin{equation*}
		\log\norm{\D\rds(t,\omega,\unif)}_{L(L^{2}_{0}(\TT^{d}))}\leq\frac{1}{2}\log D(\omega)-(2\uppi)^{2}\gamma t\,,\qquad\mathbb{P}\text{-a.s.\ in }\omega\in\Omega\,.
	\end{equation*}
	which yields the claim upon dividing by~$t$ and taking the limit superior as $t\to\infty$.
\end{proof}
Having established the negativity of the top Lyapunov exponent~\eqref{eq:top_Lyapunov_exponent}, we can now apply the stable manifold theorem to deduce asymptotic stability.
\begin{theorem}\label{thm:asymptotic_stability}
	Let $W\in C^{2}(\TT^{d})$, $\eta\in(0,1)$, $\reg>2$, $\colour\in h^{\reg}(\ZZ_{0}^{d})$ be radially symmetric and $\str>0$ be such that
	\begin{equation*}
		C_{W}^{(\eta)}-(1-\eta)<\norm{\colour}_{h^{-1}(\ZZ^{d}_{0})}^{2}C_{d}\str^{2}\,.
	\end{equation*}
	Then there exists a (deterministic) $\delta>0$ such that
	\begin{equation*}
		\PP\Bigl(\lim_{n\to\infty}\diam\bigl(\rds(n,\place,B^{0}_{\delta}(\unif))\bigr)=0\Bigr)>0\,.
	\end{equation*}
\end{theorem}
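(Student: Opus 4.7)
The plan is to deduce asymptotic stability from the negativity of the top Lyapunov exponent via a local stable manifold theorem for random dynamical systems. Since $\unif$ is a deterministic fixed point of $\rds$ and Theorem~\ref{thm:Lyapunov_exponent_negative_energy_spectrum} provides
\begin{equation*}
	\limsup_{t \to \infty} \frac{1}{t} \log \norm{\D\rds(t,\omega,\unif)}_{L(L^{2}_{0}(\TT^{d}))} \leq \lambda <0 \qquad \PP\text{-a.s.},
\end{equation*}
I would invoke an infinite-dimensional stable manifold theorem (in the spirit of Ruelle or Mohammed--Scheutzow) applied to the cocycle $\rds$ at $\unif$. Because \emph{every} Lyapunov exponent at $\unif$ is bounded above by $\lambda<0$, the corresponding stable manifold is not a submanifold but an entire $L^{2}(\TT^{d})$-neighbourhood of $\unif$.

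Before doing so, I would need to strengthen Lemma~\ref{lem:well_posedness_SPDE} (which only yields local Lipschitz continuity of $\rho_{0}\mapsto\rds(\place,\omega,\rho_{0})$) to Fréchet $C^{1,1}$-regularity of this map near $\unif$, with derivative at $\unif$ realised by the linearisation~\eqref{eq:linearisation}. This is the main technical obstacle. The argument proceeds most cleanly on the level of the flow-transformed PDE~\eqref{eq:path_by_path_PDE}: writing $\rho_{0}=\unif+u_{0}$ with $u_{0}$ small in $L^{2}$, one compares $\overline{\rho}(\place,\place;\flow(\omega),\unif+u_{0})$ to $\unif+\lin(\place,\place;\omega,u_{0})$ via an energy estimate on the difference. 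Since the nonlinearity $\vdiv(\rho(\nabla W\ast\rho))$ is exactly quadratic in $\rho-\unif$, this difference satisfies an inhomogeneous linear equation with forcing that is pointwise quadratic in $u_{0}$; Grönwall yields a Taylor remainder of order $\norm{u_{0}}_{L^{2}}^{2}$, uniformly on small balls and locally in time, which is precisely what the stable manifold theorem requires.

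Granted this regularity, the theorem furnishes a full-$\PP$-measure set $\Omega_{0}\subset\Omega$, random variables $\delta(\omega),C(\omega)>0$ on $\Omega_{0}$, and a deterministic rate $\lambda'\in(\lambda,0)$ such that for every $\omega\in\Omega_{0}$, every $\rho_{0}\in B^{0}_{\delta(\omega)}(\unif)\cap\init{0}$ and every $n\in\NN$,
\begin{equation*}
	\norm{\rds(n,\omega,\rho_{0})-\unif}_{L^{2}(\TT^{d})}\leq C(\omega)\euler^{\lambda' n}\norm{\rho_{0}-\unif}_{L^{2}(\TT^{d})}.
\end{equation*}
By the triangle inequality, this gives $\diam\bigl(\rds(n,\omega,B^{0}_{\delta(\omega)}(\unif))\bigr)\leq 2C(\omega)\delta(\omega)\euler^{\lambda' n}\to 0$ as $n\to\infty$ for every $\omega\in\Omega_{0}$.

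Finally, since $\delta(\omega)>0$ on the full-measure set $\Omega_{0}$, continuity of $\PP$ along the decreasing events $\{\delta(\omega)<1/m\}$ yields the existence of a deterministic $\delta>0$ with $\PP(\delta(\omega)\geq\delta)>0$. On this positive-probability event one has $B^{0}_{\delta}(\unif)\subset B^{0}_{\delta(\omega)}(\unif)$, so $\diam\bigl(\rds(n,\place,B^{0}_{\delta}(\unif))\bigr)\to 0$, which is the desired conclusion. To summarise, the proof is essentially a packaging of a well-established RDS theorem; the only nontrivial step is upgrading the Lipschitz solution map to a $C^{1,1}$ one at $\unif$, which the quadratic structure of the McKean--Vlasov nonlinearity makes accessible through standard energy methods.
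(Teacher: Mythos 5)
Your proposal is correct and takes essentially the same route as the paper: the paper's proof simply cites the negativity of the top Lyapunov exponent (Theorem~\ref{thm:Lyapunov_exponent_negative_energy_spectrum}), the stable manifold theorem on separable Hilbert spaces ({\cite[Cor.~1]{scheutzow_vorkastner_18}}), and a covering argument ({\cite[Proof of Lem.~3.3]{flandoli_gess_scheutzow_17}}) that converts a random attraction radius $\delta(\omega)$ into a deterministic $\delta$, exactly as you do. You have correctly identified that the cited stable manifold theorems require more than the local Lipschitzness recorded in Lemma~\ref{lem:well_posedness_SPDE} (namely Fréchet $C^{1,1}$ regularity of $\rho_{0}\mapsto\rds(t,\omega,\rho_{0})$ at $\unif$), which the paper's two-sentence proof silently elides; your sketch via the quadratic structure of the nonlinearity and a Grönwall estimate on the Taylor remainder is a reasonable way to fill this in. One small slip: the events $\{\delta(\omega)<1/m\}$ are decreasing and shrink to a null set, so the argument is that $\PP(\delta(\omega)\geq 1/m)\to 1$, hence is positive for some $m$; the idea is right, just phrase it via the complements.
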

\begin{proof}
	An application of Theorem~\ref{thm:RDS_for_SPDE} yields that $\rds\from[0,\infty)\times\Omega\times\init{0}\to\init{0}$
	is a random dynamical system over the metric dynamical system $(\Omega,\mathcal{F}^{0},\PP,(\shift_{t})_{t\in\RR})$.
	Further, we have shown in Theorem~\ref{thm:Lyapunov_exponent_negative_energy_spectrum},
	that~$\rds$ has a negative top Lyapunov exponent at the constant
	state~$\unif$. The claim then follows by the stable
	manifold theorem on separable Hilbert spaces and a covering argument, see~\cite[Cor.~1]{scheutzow_vorkastner_18} and~\cite[Proof of Lem.~3.3]{flandoli_gess_scheutzow_17}.
\end{proof}
\subsection{Reachability}\label{subsec:reachability}
In this subsection we first show for any $\delta>0$ and $A\subset\init{1}$ bounded, that there exists a time $t_{1}=t_{1}(\delta,A)$ such that the solution to the SPDE~\eqref{eq:SPDE} started from $\rho_{0}\in A$ reaches the $\dot{H}^{-1}(\TT^{d})$-ball of radius~$\delta$ around the uniform state~$\unif$ with positive probability at time~$t_{1}$ (Lemma~\ref{lem:reachability}). We then use a lower-semicontinuity argument to ensure that this positive probability can be bounded away from zero uniformly over all initial conditions in $A$ (Theorem~\ref{thm:reachability_uniform}).
\begin{lemma}\label{lem:reachability}
	Let $W\in C^{2}(\TT^{d})$, $\reg>4$, $\colour\in h^{\reg}(\ZZ_{0}^{d})$ be non-trivial and radially symmetric, $\str>0$, $\delta>0$ and $A\subset\init{1}$ be bounded.
	Then there exists some $t_{1}=t_{1}(\delta,A)\geq0$, which also depends on $\str$, $\colour$ and $W$,
	such that for all $\rho_{0}\in A$,
	\begin{equation*}
		\mathbb{P}\bigl(\lVert\rds(t_{1},\place,\rho_{0})-\unif\rVert_{\dot{H}^{-1}(\T^{d})}<\delta\bigr)>0\,.
	\end{equation*}
\end{lemma}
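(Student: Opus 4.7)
The plan is to combine the support theorem (Theorem~\ref{thm:support_theorem_McKean_Vlasov}) with a controllability statement for the skeleton equation, using the mixing bounds collected in Appendix~\ref{app:transport_equations}.

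First, I would reduce the statement to a deterministic problem. The set $U_{\delta}\defeq\{u\in\init{0}:\norm{u-\unif}_{\dot{H}^{-1}(\T^{d})}<\delta\}$ is open in $\init{0}$, since $L^{2}(\T^{d})\embed\dot{H}^{-1}(\T^{d})$ is continuous; hence its preimage under the continuous evaluation $\mathrm{ev}_{t_{1}}\from C([0,t_{1}];\init{0})\to\init{0}$ is an open subset $V\subset C([0,t_{1}];\init{0})$. Theorem~\ref{thm:support_theorem_McKean_Vlasov} identifies $\supp(\rho(\place,\place;\flow,\rho_{0}))$ with the $C([0,t_{1}];\init{0})$-closure of the skeleton solutions $\{\rho(\place,\place;\flow_{h},\rho_{0}):h\in\cm(\colour)\}$. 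Consequently it suffices to exhibit, for each $\rho_{0}\in A$, some $h\in\cm(\colour)$ and a uniform $t_{1}$ with $\norm{\rho(t_{1},\place;\flow_{h},\rho_{0})-\unif}_{\dot{H}^{-1}(\T^{d})}<\delta$: the skeleton trajectory then lies in the open set $V$, which forces $V$ (and therefore the event $\{\rds(t_{1},\place,\rho_{0})\in U_{\delta}\}$) to have positive probability.

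Second, I would construct such a control using mixing. Writing $u=\rho-\unif$ and using $\nabla W\ast\unif=0$ together with $\vdiv h=0$, the skeleton equation rearranges as
\begin{equation*}
\partial_{t}u=\Delta u+\sqrt{2}\,\str\vdiv(uh)+\Delta W\ast u+\vdiv(u\,\nabla W\ast u)\,.
\end{equation*}
The mixing results in Appendix~\ref{app:transport_equations} should supply, for any $M,\varepsilon>0$, a control $h\in\cm(\colour)$ and a time $t_{1}=t_{1}(M,\varepsilon)$ such that the linear transport-diffusion equation satisfies $\norm{u(t_{1})}_{\dot{H}^{-1}(\T^{d})}<\varepsilon$ uniformly over initial data with $\norm{u_{0}}_{L^{2}(\T^{d})}\le M$. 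Since $A$ is bounded in $\init{1}$ and hence in $L^{2}(\T^{d})$, one such $M$ works uniformly over~$A$. The nonlinear terms $\Delta W\ast u+\vdiv(u\,\nabla W\ast u)$ are controlled in $\dot{H}^{-1}(\T^{d})$ by $C\norm{u}_{L^{2}(\T^{d})}(1+\norm{u}_{L^{2}(\T^{d})})$ using $W\in C^{2}(\T^{d})$, and a pathwise energy estimate (since the transport term is $L^{2}$-conservative) keeps $\norm{u(\place)}_{L^{2}(\T^{d})}$ uniformly bounded on $[0,t_{1}]$ in terms of $M$ and $\norm{h}_{\cm(\colour)}$.

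The main obstacle is coupling the mixing with the nonlinearity: a naive Duhamel expansion around the linear mixing flow could lose the $\dot{H}^{-1}$-decay, since the nonlinear term scales in $u$ as the linear transport term does. I plan to address this either by a rescaling $h\mapsto\lambda h$ with $\lambda$ large, which pushes the mixing time scale below that of the nonlinear drift so that the enhanced dissipation dominates over $[0,t_{1}]$, or by splitting $[0,t_{1}]$ into short mixing bursts separated by intervals on which the pathwise energy estimate bounds the nonlinear correction. In either case the resulting constants depend on $\rho_{0}\in A$ only through the uniform $\init{1}$-bound, which yields a time $t_{1}$ independent of $\rho_{0}$ and completes the argument.
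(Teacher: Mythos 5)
Your first option---amplify the control and shrink the time so that the enhanced mixing outruns the McKean--Vlasov nonlinearity---is precisely the paper's construction: it introduces a parameter $\alpha>0$, runs the skeleton equation with transport coefficient $\alpha^{-1}\sqrt{2}\str$, rescales time $t\mapsto\alpha t$ so the diffusion and interaction terms acquire a factor $\alpha$, bounds the resulting deviation from pure transport by $\alpha^{1/2}C_{\tilde t_1}$ via Lemma~\ref{lem:zeroth_order_bound}, and finally sets $t_1=\alpha\tilde t_1$, concluding with the support theorem exactly as you describe. Two points worth tightening: first, the mixer input (Lemma~\ref{lem:existence_Cameron_Martin_mixer}) concerns the \emph{inviscid} transport equation $\partial_t u=\sqrt{2}\str\nabla\cdot(uv)$, not a transport--diffusion equation, since the diffusion is absorbed into the $O(\alpha^{1/2})$ perturbation alongside the interaction; second, the uniform mixing time $\tilde t_1$ cannot be obtained from an $L^2$-bound $\norm{u_0}_{L^2}\le M$ alone---the paper uses that $A-\unif$ is bounded in $\dot{H}^{1}(\TT^{d})$, hence relatively compact in $L^{2}(\TT^{d})$, which is what lets the Wong--Zakai approximation (Theorem~\ref{thm:Wong_Zakai_stochastic_transport_equation}) converge uniformly over $A$ and allows a single realization $\omega_*$, hence a single $v\in\cm(\colour)$, to mix all initial data simultaneously. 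Your closing remark about the $\init{1}$-bound is therefore doing more work than the $L^2$-bound you invoked; the rest of the plan is sound.
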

\begin{proof}
	Let $\delta>0$ and $A\subset\init{1}$ be bounded.
	Denote $A-\unif\defeq\{\rho_{0}-\unif:\rho_{0}\in A\}$.
	By Lemma~\ref{lem:existence_Cameron_Martin_mixer}
	there exist some $\tilde{t}_{1}=\tilde{t}_{1}(\delta,A-\unif,\str,\colour)$ and $v=v(\delta,A-\unif,\str,\colour)\in\cm(\colour)$
	such that for any $u_{0}\defeq\rho_{0}-\unif \in A-\unif$ and any solution $(t,x)\mapsto u_{v}(t,x;u_{0})$ to the transport
	equation 
	\begin{equation*}
		\partial_{t}u=\sqrt{2}\str\nabla\cdot(uv)\,,\qquad u\tzero=\rho_{0}\,,
	\end{equation*}
	it holds that uniformly in $\rho_{0}\in A$,
	\begin{equation}\label{eq:mixing_estimate_application}
		\lVert u_{v}(\tilde{t}_{1},\place;\rho_{0})-\unif\rVert_{\dot{H}^{-1}(\TT^{d})}=\lVert u_{v}(\tilde{t}_{1},\place;u_{0})\rVert_{\dot{H}^{-1}(\TT^{d})}\leq\delta/2\,.
	\end{equation}
	Let $\scale>0$ and $(t,x)\mapsto\rho_{v}^{(\scale)}(t,x;\rho_{0})$ be the solution to the
	following PDE 
	\begin{equation*}
		\partial_{t}\rho=\Delta\rho+\nabla\cdot(\rho(\nabla W\ast\rho))+\scale^{-1}\sqrt{2}\str\nabla\cdot(\rho v)\,,\qquad\rho\tzero=\rho_{0}\,.
	\end{equation*}
	Rescaling in time, we define $\tilde{\rho}_{v}^{(\scale)}(t,x;\rho_{0})\defeq\rho_{v}^{(\scale)}(\scale t,x;\rho_{0})$
	for every $t\geq0$ and $x\in\TT^{d}$. We then have that $\tilde{\rho}_{v}^{(\scale)}$
	solves
	\begin{equation}\label{eq:skeleton_strong_noise_time_change}
		\partial_{t}\tilde{\rho}=\scale(\Delta\tilde{\rho}+\nabla\cdot(\tilde{\rho}(\nabla W\ast\tilde{\rho})))+\sqrt{2}\str\nabla\cdot(\tilde{\rho}v)\,,\qquad\tilde{\rho}\tzero=\rho_{0}\,.
	\end{equation}
	Let us now define for every $t\geq0$,
	\begin{equation*}
		\begin{split}
			S_{t}\from\R_{>0}\times \init{1} & \to H^{-1}(\TT^{d})\,,\\
			(\scale,\rho_{0}) & \mapsto S_{t}(\scale;\rho_{0})\defeq\tilde{\rho}_{v}^{(\scale)}(t,\place;\rho_{0})\,,
		\end{split}
	\end{equation*}
	to be the solution map associated to~\eqref{eq:skeleton_strong_noise_time_change}.
	We can then decompose
	\begin{equation}\label{eq:recurrence_decomposition}
		S_{t}(\scale;\rho_{0})-\unif=\bigl(S_{t}(0;\rho_{0})-\unif\bigr)+\bigl(S_{t}(\scale;\rho_{0})-S_{t}(0;\rho_{0})\bigr)
	\end{equation}
	and control each term separately.

	Using that $S_{t}(0;\rho_{0})-\unif=u_{v}(t,\place;u_{0})$, we obtain by~\eqref{eq:mixing_estimate_application} that uniformly in $\rho_{0}\in A$,
	\begin{equation*}
		\lVert S_{\tilde{t}_{1}}(0;\rho_{0})-\unif\rVert_{\dot{H}^{-1}(\T^{d})}\leq\delta/2\,.
	\end{equation*}

	Further, an application of Lemma~\ref{lem:zeroth_order_bound}, yields
	that there exists some $\scale_{0}=\scale_{0}(\delta,A,\str,\colour,W)$ such that for every
	$\scale\leq\scale_{0}$, uniformly in $\rho_{0}\in A$,
	\begin{equation*}
		\norm{S_{\tilde{t}_{1}}(\scale;\rho_{0})-S_{\tilde{t}_{1}}(0;\rho_{0})}_{\dot{H}^{-1}(\TT^{d})}\leq\delta/2\,.
	\end{equation*}
	Combining the above with~\eqref{eq:recurrence_decomposition}, we
	obtain that there exists some $\tilde{t}_{1}=\tilde{t}_{1}(\delta,A-\unif,\str,\colour)$
	and $v=v(\delta,A-\unif,\str,\colour)\in\cm(\colour)$ such that for every $\scale\leq\scale_{0}$, uniformly in $\rho_{0}\in A$,
	\begin{equation*}
		\lVert S_{\tilde{t}_{1}}(\scale;\rho_{0})-\unif\rVert_{\dot{H}^{-1}(\TT^{d})}\leq\delta\,.
	\end{equation*}
	Define $t_{1}\defeq\scale\tilde{t}_{1}$ and note that $t_{1}=t_{1}(\delta,A,\str,\colour,W)$
	is uniform in $\rho_{0}\in A$ since $\tilde{t}_{1}=\tilde{t}_{1}(\delta,A-\unif,\str,\colour)$
	and $\scale=\scale(\delta,A,\str,\colour,W)$ are both uniform in $\rho_{0}\in A$.
	Using that
	\begin{equation*}
		S_{\tilde{t}_{1}}(\scale;\rho_{0})=\rho_{v}^{(\scale)}(\scale\tilde{t}_{1},\place;\rho_{0})=\rho_{v}^{(\scale)}(t_{1},\place;\rho_{0})\,,
	\end{equation*}
	we obtain
	\begin{equation*}
		\lVert\rho_{v}^{(\scale)}(t_{1},\place;\rho_{0})-\unif\rVert_{\dot{H}^{-1}(\TT^{d})}\leq\delta\,.
	\end{equation*}
	Let
	\begin{equation*}
		N_{\rho_{0},t_{1},\delta}\defeq\Bigl\{\rho\in C([0,T];\init{0}):\rho(0)=\rho_{0},\,\lVert\rho(t_{1})-\unif\rVert_{\dot{H}^{-1}(\TT^{d})}<\delta\Bigr\}\,.
	\end{equation*}
	In particular, $N_{\rho_{0},t_{1},\delta}$ is a neighbourhood of $\rho_{v}^{(\alpha)}(\place,\place;\rho_{0})$ and we know by the support theorem (Theorem~\ref{thm:support_theorem_McKean_Vlasov}),
	that
	\begin{equation*}
		\rho_{v}^{(\scale)}(\place,\place;\rho_{0})\in\supp(\rds(\place,\place,\rho_{0}))\,.
	\end{equation*}
	By the definition of the support of a measure, it follows that the
	every open neighbourhood of $\rho_{v}^{(\scale)}$ has positive measure with respect to the law of $\rds(\place,\place,\rho_{0})$,
	hence in particular $\PP(\rds(\place,\place,\rho_{0})\in N_{\rho_{0},t_{1},\delta})>0$. It follows that
	for every $\rho_{0}\in A$,
	\begin{equation*}
		\mathbb{P}\bigl(\lVert\rds(t_{1},\place,\rho_{0})-\unif\rVert_{\dot{H}^{-1}(\TT^{d})}<\delta\bigr)=\PP\bigl(\rds(\place,\place,\rho_{0})\in N_{\rho_{0},t_{1},\delta}\bigr)>0\,,
	\end{equation*}
	which yields the claim.
\end{proof}
Next we use a lower-semicontinuity argument to show that the probability of reaching a neighbourhood around the uniform state~$\unif$ is positive uniformly over all initial conditions from a closed and bounded subset $A\subset\init{1}$.
\begin{lemma}\label{lem:reachability_lsc}
	Let $W\in C^{2}(\TT^{d})$, $\reg>2$, $\colour\in h^{\reg}(\ZZ_{0}^{d})$ be radially symmetric, $\str>0$, $\delta>0$ and $t\geq0$. It then
	follows that the map 
	\begin{equation*}
		\init{0}\ni\rho_{0}\mapsto\mathbb{P}\bigl(\lVert\rds(t,\place,\rho_{0})-\unif\rVert_{\dot{H}^{-1}(\T^{d})}<\delta\bigr)
	\end{equation*}
	is lower semicontinuous.
\end{lemma}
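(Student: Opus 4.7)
The plan is to deduce this as an immediate consequence of the weak continuity established in Lemma~\ref{lem:weak_continuity} combined with the Portmanteau theorem. The only non-trivial point is that the sublevel set cut out by the $\dot{H}^{-1}(\TT^{d})$-norm must be identified with an open set in the ambient space $H^{-1}(\TT^{d})$ on which weak convergence has been established.

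First, I would observe that since $\rds(t,\omega,\rho_{0})\in\init{0}$ $\PP$-almost surely, we have $\widehat{\rds(t,\place,\rho_{0})}(0)=1=\hat{\unif}(0)$, so the difference $\rds(t,\place,\rho_{0})-\unif$ is mean-free $\PP$-a.s., and hence
\[
\lVert\rds(t,\place,\rho_{0})-\unif\rVert_{\dot{H}^{-1}(\TT^{d})}^{2}=\sum_{k\in\ZZ_{0}^{d}}\lvert 2\uppi k\rvert^{-2}\bigl\lvert\widehat{\rds(t,\place,\rho_{0})}(k)\bigr\rvert^{2}.
\]
This motivates introducing $\phi\from H^{-1}(\TT^{d})\to[0,\infty)$ defined by $\phi(u)^{2}\defeq\sum_{k\in\ZZ_{0}^{d}}\lvert 2\uppi k\rvert^{-2}\lvert\hat{u}(k)\rvert^{2}$. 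Since $\lvert 2\uppi k\rvert^{-2}\leq(2\uppi)^{-2}(1+\lvert 2\uppi k\rvert^{2})^{-1}$ for $k\in\ZZ_{0}^{d}$, this $\phi$ is a continuous semi-norm on $H^{-1}(\TT^{d})$ (equivalent to the quotient $H^{-1}$-norm modulo constants); in particular the set $U\defeq\{u\in H^{-1}(\TT^{d}):\phi(u)<\delta\}$ is open.

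With this preparation, the event $\{\lVert\rds(t,\place,\rho_{0})-\unif\rVert_{\dot{H}^{-1}(\TT^{d})}<\delta\}$ coincides $\PP$-a.s.\ with $\{\rds(t,\place,\rho_{0})\in U\}$, so that
\[
\PP\bigl(\lVert\rds(t,\place,\rho_{0})-\unif\rVert_{\dot{H}^{-1}(\TT^{d})}<\delta\bigr)=\bigl(\rds(t,\place,\rho_{0})_{\#}\PP\bigr)(U).
\]
By Lemma~\ref{lem:weak_continuity}, the map $\rho_{0}\mapsto\rds(t,\place,\rho_{0})_{\#}\PP$ is continuous from $\init{0}$ into the space of Borel probability measures on $H^{-1}(\TT^{d})$ endowed with the topology of weak convergence. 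Since $U$ is open in $H^{-1}(\TT^{d})$, the Portmanteau theorem gives that $\rho_{0}\mapsto(\rds(t,\place,\rho_{0})_{\#}\PP)(U)$ is lower semicontinuous, which is the claim.

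I do not anticipate any serious obstacle. The only step requiring a moment's care is the identification of the $\dot{H}^{-1}$-sublevel set on $\init{0}$ with an open set in $H^{-1}(\TT^{d})$, and even that is elementary once one observes that the zeroth Fourier mode is killed by $\phi$ so that $\phi$ descends to a genuine norm on the relevant quotient.
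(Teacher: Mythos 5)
Your proposal is correct and follows essentially the same route as the paper's proof: both rely on Lemma~\ref{lem:weak_continuity} together with the Portmanteau theorem and the openness of the set $\{\rho\in H^{-1}(\TT^{d}):\lVert\rho-\unif\rVert_{\dot{H}^{-1}(\TT^{d})}<\delta\}$ in $H^{-1}(\TT^{d})$. The only difference is presentational: you spell out in detail why this set is open (via the continuous semi-norm $\phi$ that kills the zeroth Fourier mode), whereas the paper simply asserts it; this extra care does no harm and is a reasonable thing to verify, but it is not a genuinely different argument.
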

\begin{proof}
	Let $(\rho_{0}^{(k)})_{k\in\mathbb{N}}$ be a sequence such that $\rho_{0}^{(k)}\in \init{0}$
	for each $k\in\mathbb{N}$ and $\rho_{0}^{(k)}\to\rho_{0}$ in $\init{0}$
	as $k\to\infty$. We obtain by Lemma~\ref{lem:weak_continuity}, the Portmanteau theorem and by the
	openness of the set 
	\begin{equation*}
		\{\rho\in H^{-1}(\mathbb{T}^{d}):\lVert\rho-\unif\rVert_{\dot{H}^{-1}(\TT^{d})}<\delta\}\,,
	\end{equation*}
	that
	\begin{equation*}
		\liminf_{k\to\infty}\mathbb{P}\bigl(\lVert\rds(t,\place,\rho_{0}^{(k)})-\unif\rVert_{\dot{H}^{-1}(\TT^{d})}<\delta\bigr)
		\geq\mathbb{P}\bigl(\lVert\rds(t,\place,\rho_{0})-\unif\rVert_{\dot{H}^{-1}(\TT^{d})}<\delta\bigr)\,,
	\end{equation*}
	which is equivalent to the lower semicontinuity of 
	\begin{equation*}
		\rho_{0}\mapsto\mathbb{P}\bigl(\lVert\rds(t,\place,\rho_{0})-\unif\rVert_{\dot{H}^{-1}(\TT^{d})}<\delta\bigr)\,.
	\end{equation*}
	This yields the claim.
\end{proof}
\begin{theorem}\label{thm:reachability_uniform}
	Let $W\in C^{2}(\TT^{d})$, $\reg>4$, $\colour\in h^{\reg}(\ZZ_{0}^{d})$ be non-trivial and radially symmetric, $\str>0$, $\delta>0$ and $A\subset \init{1}$ be closed and bounded. Then there exists some $t_{1}=t_{1}(\delta,A)\geq0$ and some constant $C(\delta,A)>0$, which also depend on $\str$, $\colour$ and $W$, such that 
	\begin{equation*}
		\inf_{\rho_{0}\in A}\mathbb{P}\bigl(\lVert\rds(t_{1},\place,\rho_{0})-\unif\rVert_{\dot{H}^{-1}(\T^{d})}<\delta\bigr)\geq C(\delta,A)\,.
	\end{equation*}
\end{theorem}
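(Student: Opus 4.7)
The plan is to combine the pointwise reachability from Lemma~\ref{lem:reachability} with the lower semicontinuity from Lemma~\ref{lem:reachability_lsc} via a compactness argument. The only non-trivial point is producing a compact set on which to extremize; all the analytical work has already been done in the preceding two lemmas.

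First, I would pass to the closure of $A$ in $\init{0}$. Since $A$ is bounded in $H^{1}(\TT^{d})$, by Rellich--Kondrachov it is precompact in $L^{2}(\TT^{d})$. Let $\bar{A}$ denote its closure in $\init{0}$, which is therefore compact in $\init{0}$. Moreover, by lower semicontinuity of the $H^{1}(\TT^{d})$-norm under $L^{2}(\TT^{d})$-convergence, $\bar{A}$ remains bounded in $\init{1}$. Crucially, the $H^{1}$-bound and the parameters $\str$, $\colour$, $W$, $\delta$ determining $t_{1}$ in Lemma~\ref{lem:reachability} are unchanged by this enlargement.

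Second, I would apply Lemma~\ref{lem:reachability} to the bounded set $\bar{A}\subset\init{1}$ to produce a single time $t_{1}=t_{1}(\delta,\bar{A})=t_{1}(\delta,A)\geq0$ such that
\begin{equation*}
	p(\rho_{0})\defeq\mathbb{P}\bigl(\lVert\rds(t_{1},\place,\rho_{0})-\unif\rVert_{\dot{H}^{-1}(\TT^{d})}<\delta\bigr)>0\qquad\text{for every }\rho_{0}\in\bar{A}\,.
\end{equation*}
Third, by Lemma~\ref{lem:reachability_lsc} the map $p\from\init{0}\to[0,1]$ is lower semicontinuous. Since every lower semicontinuous function on a compact topological space attains its infimum, and $\bar{A}$ is compact in $\init{0}$, there exists $\rho_{0}^{\ast}\in\bar{A}$ realising
\begin{equation*}
	\inf_{\rho_{0}\in\bar{A}}p(\rho_{0})=p(\rho_{0}^{\ast})>0\,.
\end{equation*}
Setting $C(\delta,A)\defeq p(\rho_{0}^{\ast})>0$ and restricting the infimum from $\bar{A}$ to $A$ yields the claim.

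There is no substantive obstacle here; the only subtlety is the choice of topology in which to compactify, and passing to the $\init{0}$-closure is the natural choice because this is exactly the topology in which Lemma~\ref{lem:reachability_lsc} is formulated (weak continuity of the law in $H^{-1}(\TT^{d})$ requires only $L^{2}(\TT^{d})$-continuity of the initial data through Lemma~\ref{lem:weak_continuity}). The assumption that $A$ be closed in $\init{1}$ is not actually used in the argument; what matters is that $A$ is bounded in $\init{1}$, so that $t_{1}$ from Lemma~\ref{lem:reachability} can be chosen uniformly, and that $\bar{A}$ is compact in $\init{0}$, so that the lower semicontinuous function $p$ attains its positive infimum.
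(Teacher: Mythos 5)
Your proposal follows the same overall strategy as the paper: combine the pointwise reachability of Lemma~\ref{lem:reachability} with the lower semicontinuity of Lemma~\ref{lem:reachability_lsc} and exploit compactness in $\init{0}$. The paper's proof is one sentence shorter: it directly asserts ``The compact embedding $H^{1}(\TT^{d})\embed L^{2}(\TT^{d})$ yields that $A$ is a compact subset of $\init{0}$'' and then extremizes the lower semicontinuous function over $A$ itself.

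Your extra step of passing to the $\init{0}$-closure $\bar{A}$ is, however, a genuine improvement in rigour. Rellich--Kondrachov gives \emph{precompactness} of $A$ in $L^{2}(\TT^{d})$, not compactness: a set that is closed and bounded in the $H^{1}(\TT^{d})$-topology need not be $L^{2}(\TT^{d})$-closed. (The unit sphere $\{u : \lVert u\rVert_{H^{1}}=1\}$ is $H^{1}$-closed and bounded, yet normalized high-frequency modes $e_{k}/\lVert e_{k}\rVert_{H^{1}}$ converge to $0$ in $L^{2}(\TT^{d})$ while $0$ is not on the sphere.) Your fix is exactly right: take the $L^{2}(\TT^{d})$-closure, which is compact by precompactness, and verify it stays bounded in $H^{1}(\TT^{d})$ by weak compactness of $H^{1}(\TT^{d})$-bounded sets (any $L^{2}(\TT^{d})$-limit of an $H^{1}(\TT^{d})$-bounded sequence is the weak $H^{1}(\TT^{d})$-limit of a subsequence, so the norm bound passes to the limit). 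Your closing observation that ``closed in $\init{1}$'' is not actually used, only $H^{1}(\TT^{d})$-boundedness, is also correct and clarifies what the theorem really requires. (In the paper's application within the proof of Theorem~\ref{thm:main_result}, the set $A$ produced by Lemma~\ref{lem:support_away} is actually compact in $\init{1}$, which is strictly stronger than closed and bounded and makes the paper's argument go through directly; but as stated, the hypothesis is weaker than what the paper's one-line compactness claim needs, and your version handles the general case.)
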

\begin{proof}
	Let $\delta>0$, $A\subset \init{1}$ be closed and bounded, and $t_{1}=t_{1}(\delta,A)$ be given by Lemma~\ref{lem:reachability}.
	The compact embedding $H^{1}(\TT^{d})\embed L^{2}(\TT^{d})$ yields that $A$ is a compact subset of $\init{0}$. 
	Lemma~\ref{lem:reachability_lsc} combined with the property that lower-semicontinuous functions attain their minima on compact sets, allows us to conclude that there exists some $\rho_{0}^{*}\in A$, such that
	\begin{equation*}
		\inf_{\rho_{0}\in A}\mathbb{P}\bigl(\lVert\rds(t_{1},\place,\rho_{0})-\unif\rVert_{\dot{H}^{-1}(\T^{d})}<\delta\bigr)=\mathbb{P}\bigl(\lVert\rds(t_{1},\place,\rho_{0}^{*})-\unif\rVert_{\dot{H}^{-1}(\T^{d})}<\delta\bigr)\,.
	\end{equation*}
	Lemma~\ref{lem:reachability} implies 
	\begin{equation*}
		C(\delta,A)\defeq\mathbb{P}\bigl(\lVert\rds(t_{1},\place,\rho_{0}^{*})-\unif\rVert_{\dot{H}^{-1}(\T^{d})}<\delta\bigr)>0\,,
	\end{equation*}
	which yields the claim.
\end{proof}
\subsection{Regularisation}
\label{subsec:regularisation}
In this subsection we show that for any $\delta>0$ there exists some $\delta'>0$ and $t_{2}>0$ such that any solution to the SPDE~\eqref{eq:SPDE} that lies in the $\dot{H}^{-1}(\TT^{d})$-ball of radius $\delta'$ around the uniform state~$\unif$ at some time $t_{1}$, also reaches the $L^{2}(\TT^{d})$-ball of radius $\delta$ around $\unif$ at time $t_{1}+t_{2}$ with probability at least $1/2$ (Lemma~\ref{lem:dynamics_regularisation}).
\begin{lemma}\label{lem:dynamics_regularisation}
	Let $W\in C^{2}(\mathbb{T}^{d})$, $\reg>2$, $\colour\in h^{\reg}(\ZZ_{0}^{d})$ be radially symmetric, $\str>0$ and $\delta>0$. Then there exists some $\delta'>0$ and $t_{2}>0$ such that for all $t_{1}\geq0$ and any $\rho_{0}\in \init{1}$, it holds that
	\begin{equation*}
		\begin{split}
			&\mathbb{P}\bigl(\rds(t_{1},\place,\rho_{0})\in B_{\delta'}^{-1}(\unif)\text{ and }\rds(t_{1}+t_{2},\place,\rho_{0})\in B_{\delta}^{0}(\unif)\bigr)\\
			&>\frac{1}{2}\mathbb{P}\bigl(\rds(t_{1},\place,\rho_{0})\in B_{\delta'}^{-1}(\unif)\bigr)\,.
		\end{split}
	\end{equation*}
\end{lemma}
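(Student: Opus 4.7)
The plan is to combine the conditional $L^{2}$-integral estimate of Lemma~\ref{lem:H-1_bound} with a pathwise Gronwall-type bound on $\|\rho_{t}-\unif\|_{L^{2}(\TT^{d})}^{2}$, using the Markov/cocycle property to localise at time~$t_{1}$ and a pigeonhole argument to pick out a random intermediate time at which the $L^{2}$-norm is already small.

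Write $E\defeq\{\rds(t_{1},\place,\rho_{0})\in B_{\delta'}^{-1}(\unif)\}$. First, I would shift time to~$t_{1}$ via the cocycle property and apply Lemma~\ref{lem:H-1_bound} conditionally on $\mathcal{F}_{t_{1}}$ with the random initial datum $\rds(t_{1},\omega,\rho_{0})$, yielding
\begin{equation*}
	\EE\Bigl[\mathds{1}_{E}\int_{0}^{t_{2}}\|\rds(t_{1}+s,\place,\rho_{0})-\unif\|_{L^{2}(\TT^{d})}^{2}\,\dd s\Bigr]\leq(\delta')^{2}\euler^{Ct_{2}}\PP(E)\,.
\end{equation*}
Markov's inequality then bounds, for any threshold $M>0$, the probability of the event $E\cap\{\int_{0}^{t_{2}}\|\rds(t_{1}+s,\place,\rho_{0})-\unif\|_{L^{2}}^{2}\,\dd s>M\}$ by $(\delta')^{2}\euler^{Ct_{2}}\PP(E)/M$.

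On the complementary event $F$, pigeonhole produces a random time $s^{*}\in[0,t_{2}/2]$ at which $\|\rds(t_{1}+s^{*},\place,\rho_{0})-\unif\|_{L^{2}}^{2}\leq 2M/t_{2}$. I would then propagate this smallness forward to $t_{1}+t_{2}$ via a pathwise $L^{2}$-energy estimate: setting $u=\rho-\unif$, the contribution of the divergence-free transport term $\sqrt{2}\str\vdiv(\rho\circ\xi^{\colour})$ to $\tfrac{\dd}{\dd t}\|u_{t}\|_{L^{2}}^{2}$ vanishes pathwise (since each coefficient $\theta_{k}a_{k}^{(j)}e_{k}$ is divergence-free, integration by parts kills both the Itô martingale $\langle u,\sigma_{k}\cdot\nabla u\rangle$ and its Itô--Stratonovich correction), leaving a pathwise inequality
\begin{equation*}
	\tfrac{\dd}{\dd t}\|u_{t}\|_{L^{2}(\TT^{d})}^{2}\leq C'\bigl(\|u_{t}\|_{L^{2}(\TT^{d})}^{2}+\|u_{t}\|_{L^{2}(\TT^{d})}^{4}\bigr)
\end{equation*}
with $C'=C'(\|W\|_{C^{2}(\TT^{d})})$, valid while $\|u_{t}\|_{L^{2}}\leq 1$. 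Choosing $M\defeq\delta^{2}t_{2}\euler^{-C't_{2}}/2$ and working under the harmless reduction $\delta\leq 1$, a Gronwall step on $[s^{*},t_{2}]$ yields $\|u_{t_{1}+t_{2}}\|_{L^{2}}^{2}\leq\delta^{2}$, so $F\subset E\cap\{\rds(t_{1}+t_{2},\place,\rho_{0})\in B_{\delta}^{0}(\unif)\}$. Fixing $t_{2}$ (e.g.\ $t_{2}=1$) and picking $\delta'$ small enough that $2(\delta')^{2}\euler^{(C+C')t_{2}}/(\delta^{2}t_{2})<1/2$ then gives $\PP(E\setminus F)<\PP(E)/2$, and hence the claim.

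The main technical obstacle is making the pathwise $L^{2}$-energy identity rigorous, since the cancellation above is formally a Stratonovich calculation. The cleanest route is to pass through the flow transformation of Section~\ref{sec:flow_transformation}: since $\flow(\omega)\in C([0,T];\diffvol{2})$ is volume-preserving, one has $\|\overline{\rho}_{t}-\unif\|_{L^{2}}=\|\rho_{t}-\unif\|_{L^{2}}$ pathwise, and the transformed equation~\eqref{eq:path_by_path_PDE} is a purely deterministic parabolic PDE for each fixed~$\omega$, so the above energy inequality becomes a genuine pathwise ODE after bounding the coefficients $A(t,\flow(\omega),x)$ and $b(t,\flow(\omega),x)$ via the regularity of $\flow(\omega)$ furnished by Lemma~\ref{lem:RDS_for_stochastic_characteristics}.
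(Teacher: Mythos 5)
Your proposal is correct and follows essentially the same route as the paper: both combine the integrated $L^{2}$-bound of Lemma~\ref{lem:H-1_bound} with Markov's inequality and a pigeonhole argument to produce an intermediate time at which the $L^{2}$-norm is already small, then propagate this smallness to time $t_{1}+t_{2}$ via a pathwise energy estimate made rigorous through the flow transformation, and close with the independence of noise increments. The only cosmetic differences are that the paper first establishes a bound uniform over $\rho_{1}\in B_{\delta'}^{-1}(\unif)$ before conditioning, uses a probability-level pigeonhole giving a deterministic intermediate time $\tau_{\rho_{1}}$, and invokes a Nash-inequality additive bound $\|u_{T_{2}}\|^{2}\le\|u_{\tau}\|^{2}+C_{*}(T_{2}-\tau)$ in place of your pathwise pigeonhole time $s^{*}(\omega)$ and Gronwall-type differential inequality.
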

\begin{proof}
	Let $\delta>0$. We first show that there exists some $\delta'>0$ and $t_{2}>0$ such that uniformly over $\rho_{1}\in B_{\delta'}^{-1}(\unif)$,
	\begin{equation}\label{eq:regularisation_uniform}
		\PP\bigl(\rds(t_{2},\place,\rho_{1})\in B_{\delta}^{0}(\unif)\bigr)>1/2\,.
	\end{equation}
	The claim then follows by a conditioning argument.

	For every $t>0$, $\delta_{0}>0$, $\delta'>0$ and $\rho_{1}\in B_{\delta'}^{-1}(\unif)$, it follows by an application of Markov's inequality, that
	\begin{equation*}
		\mathbb{P}\bigl(\rds(t,\place,\rho_{1})\not\in B_{\delta_{0}}^{0}(\unif)\bigr)\leq\frac{\mathbb{E}[\lVert\rds(t,\place,\rho_{1})-\unif\rVert_{L^{2}(\TT^{d})}^{2}]}{\delta_{0}^{2}}\,.
	\end{equation*}
	Let $T_{2}>T_{1}>0$. Integrating the above inequality over $t\in[T_{1},T_{2}]$, we obtain
	\begin{equation*}
		\int_{T_{1}}^{T_{2}}\mathbb{P}\bigl(\rds(t,\place,\rho_{1})\not\in B_{\delta_{0}}^{0}(\unif)\bigr)\,\dd t\leq\frac{1}{\delta_{0}^{2}}\int_{T_{1}}^{T_{2}}\mathbb{E}[\lVert\rds(t,\place,\rho_{1})-\unif\rVert_{L^{2}(\TT^{d})}^{2}]\,\dd t\,.
	\end{equation*}
	An application of Lemma~\ref{lem:H-1_bound} yields that
	there exists some constant
	\begin{equation*}
		C=C(d,\norm{W}_{C^{2}(\TT^{d})},\norm{\colour}_{h^{\reg}(\ZZ_{0}^{d})},\str)>0
	\end{equation*}
	independent of $\rho_{1}$ such that
	\begin{equation*}
		\int_{T_{1}}^{T_{2}}\mathbb{E}\bigl[\lVert\rds(t,\place,\rho_{1})-\unif\rVert_{L^{2}(\TT^{d})}^{2}\bigr]\,\dd t\leq\euler^{CT_{2}}\EE[\lVert\rho_{1}-\unif\rVert_{\dot{H}^{-1}(\TT^{d})}^{2}]\,,
	\end{equation*}
	which implies
	\begin{equation}\label{eq:regularisation_integrated_bound}
		\int_{T_{1}}^{T_{2}}\mathbb{P}\bigl(\rds(t,\place,\rho_{1})\not\in B_{\delta_{0}}^{0}(\unif)\bigr)\,\dd t\leq\euler^{CT_{2}}\frac{\EE\bigl[\lVert\rho_{1}-\unif\rVert_{\dot{H}^{-1}(\TT^{d})}^{2}\bigr]}{\delta_{0}^{2}}\leq \euler^{CT_{2}}\frac{(\delta')^{2}}{\delta_{0}^{2}}\,.
	\end{equation}
	Choosing $\delta'$ sufficiently small depending on $T_{1}$, $T_{2}$, $\delta_{0}$ and $C$, we obtain
	\begin{equation*}
		\euler^{CT_{2}}\frac{(\delta')^{2}}{\delta_{0}^{2}}<\frac{T_{2}-T_{1}}{4}\,.
	\end{equation*}
	Define for every $\rho_{1}\in B_{\delta'}^{-1}(\unif)$,
	\begin{equation*}
		\mathcal{T}_{\rho_{1}}\defeq\Bigl\{t\in[T_{1},T_{2}]:\mathbb{P}\bigl(\rds(t,\place,\rho_{1})\not\in B_{\delta_{0}}^{0}(\unif)\bigr)<1/2\Bigr\}\,.
	\end{equation*}
	Assume that $\lvert\mathcal{T}_{\rho_{1}}\rvert<(T_{2}-T_{1})/2$. Then,
	\begin{equation*}
		\begin{split}
			\int_{T_{1}}^{T_{2}}\mathbb{P}\bigl(\rds(t,\place,\rho_{1})\not\in B_{\delta_{0}}^{0}(\unif)\bigr)\,\dd t &\geq\int_{\mathcal{T}_{\rho_{1}}^\complement}\mathbb{P}\bigl(\rds(t,\place,\rho_{1})\not\in B_{\delta_{0}}^{0}(\unif)\bigr)\,\dd t\\
			&>\frac{T_{2}-T_{1}}{4}>\euler^{CT_{2}}\frac{(\delta')^{2}}{\delta_{0}^{2}}\,,
		\end{split}
	\end{equation*}
	which contradicts~\eqref{eq:regularisation_integrated_bound}. Hence, $\lvert\mathcal{T}_{\rho_{1}}\rvert\geq(T_{2}-T_{1})/2$. 
	
	Consequently, for each $\delta_{0}>0$ and $T_{2}>T_{1}>0$ there exists some $\delta'>0$ such that for every $\rho_{1}\in B_{\delta'}^{-1}(\unif)$ we can find some $\tau_{\rho_{1}}\in[T_{1},T_{2}]$ such that
	\begin{equation*}
		\mathbb{P}\bigl(\rds(\tau_{\rho_{1}},\place,\rho_{1})\in B_{\delta_{0}}^{0}(\unif)\bigr)>1/2\,.
	\end{equation*}

	The cocycle property combined with a pathwise energy estimate based on Nash's inequality yields that there exists some constant $C_{*}=C_{*}(d,\norm W_{C^{2}(\TT^{d})})$ such that for each $\rho_{1}\in B_{\delta'}^{-1}(\unif)$, $\PP$-a.s.\ in $\omega\in\Omega$,
	\begin{equation*}
		\norm{\rds(T_{2},\omega,\rho_{1})-\unif}_{L^{2}(\TT^{d})}^{2}\leq\norm{\rds(\tau_{\rho_{1}},\omega,\rho_{1})-\unif}_{L^{2}(\TT^{d})}^{2}+C_{*}(T_{2}-\tau_{\rho_{1}})\,.
	\end{equation*}
	For every $\delta>0$, we can choose $T_{2}>T_{1}>0$ such that
	\begin{equation*}
		C_{*}(T_{2}-\tau_{\rho_{1}})\leq C_{*}(T_{2}-T_{1})<\delta^{2}/2
	\end{equation*}
	and $\delta_{0}>0$ such that $\delta_{0}^{2}<\delta^{2}/2$.
	Therefore, for every $\delta>0$ there exists some $\delta'>0$ such that for every $\rho_{1}\in B_{\delta'}^{-1}(\unif)$,
	\begin{equation*}
		\begin{split}
			\PP\bigl(\rds(T_{2},\place,\rho_{1})\in B_{\delta}^{0}(\unif)\bigr)&=\PP\bigl(\norm{\rds(T_{2},\place,\rho_{1})-\unif}_{L^{2}(\TT^{d})}<\delta\bigr)\\
			&\geq\PP\bigl(\norm{\rds(\tau_{\rho_{1}},\place,\rho_{1})-\unif}_{L^{2}(\TT^{d})}<\delta_{0}\bigr)\\
			&>1/2\,.
		\end{split}
	\end{equation*}
	Choosing $t_{2}\defeq T_{2}$, we obtain~\eqref{eq:regularisation_uniform}.
	
	Let $\delta'>0$ and $t_{2}>0$ be as above. For all $\rho_{0}\in\init{1}$ and $t_{1}>0$ it follows by an application of the cocycle property, that
	\begin{equation*}
		\rds(t_{1}+t_{2},\place,\rho_{0})=\rds(t_{2},\shift_{t_{1}}\place,\rds(t_{1},\place,\rho_{0}))\,.
	\end{equation*}
	Note that $\rds(t_{1},\place,\rho_{0})$ is $(\mathcal{F}_{0,t_{1}},\mathcal{B}(\init{0}))$-measurable and that $\rds(t_{2},\shift_{t_{1}}\place,\rho_{1})$ is $(\mathcal{F}_{t_{1},t_{1}+t_{2}},\mathcal{B}(\init{0}))$-measurable for fixed $\rho_{1}$.
	Conditioning on the value of $P(t_{1},\place,\rho_{0})$, it follows by the independence of $\mathcal{F}_{0,t_{1}}$ and $\mathcal{F}_{t_{1},t_{1}+t_{2}}$, that
	\begin{equation*}
		\begin{split}
			&\mathbb{P}\bigl(\rds(t_{1},\place,\rho_{0})\in B_{\delta'}^{-1}(\unif),\,\rds(t_{1}+t_{2},\place,\rho_{0})\in B_{\delta}^{0}(\unif)\bigr)\\
			&=\int_{B_{\delta'}^{-1}(\unif)}\mathbb{P}\bigl(\rds(t_{2},\shift_{t_{1}}\place,\rho_{1})\in B_{\delta}^{0}(\unif)\bigr)\,\dd\law\bigl(\rds(t_{1},\place,\rho_{0})\bigr)(\rho_{1})\,.
		\end{split}
	\end{equation*}
	Using the $(\shift_{t})_{t\geq0}$-invariance of $\PP$, we then obtain
	\begin{equation}\label{eq:regularisation_conditioning}
		\begin{split}
			&\mathbb{P}\bigl(\rds(t_{1},\place,\rho_{0})\in B_{\delta'}^{-1}(\unif),\,\rds(t_{1}+t_{2},\place,\rho_{0})\in B_{\delta}^{0}(\unif)\bigr)\\
			&=\int_{B_{\delta'}^{-1}(\unif)}\mathbb{P}\bigl(\rds(t_{2},\place,\rho_{1})\in B_{\delta}^{0}(\unif)\bigr)\,\dd\law\bigl(\rds(t_{1},\place,\rho_{0})\bigr)(\rho_{1})\,.
		\end{split}
	\end{equation}
	It then follows by~\eqref{eq:regularisation_uniform}~\&~\eqref{eq:regularisation_conditioning}, that
	\begin{equation*}
		\begin{split}
			&\mathbb{P}\bigl(\rds(t_{1},\place,\rho_{0})\in B_{\delta'}^{-1}(\unif),\,\rds(t_{1}+t_{2},\place,\rho_{0})\in B_{\delta}^{0}(\unif)\bigr)\\
			&>\frac{1}{2}\mathbb{P}\bigl(\rds(t_{1},\place,\rho_{0})\in B_{\delta'}^{-1}(\unif)\bigr)\,.
		\end{split}
	\end{equation*}
	This yields the claim.
\end{proof}
\subsection{Proof of the main result}
\label{subsec:uniqueness}
In this subsection we combine asymptotic stability (Theorem~\ref{thm:asymptotic_stability}) with reachability (Theorem~\ref{thm:reachability_uniform}) and uniform smoothing (Lemma~\ref{lem:dynamics_regularisation}) to prove that the SPDE~\eqref{eq:SPDE} started from initial data~$\rho_{0}$ enters an $L^{2}(\TT^{d})$-ball of radius~$\eps$ around the uniform state~$\unif$ with positive probability, asymptotically as time tends to infinity, uniformly over $\eps>0$ and $\rho_{0}$ in a bounded and closed subset of $\init{1}$ (Lemma~\ref{lem:key_reachability}). This is the key step in the proof of our main result (Theorem~\ref{thm:main_result}) which we present at the end of this subsection.
\begin{lemma}\label{lem:key_reachability}
	Let $W\in C^{2}(\TT^{d})$, $\eta\in(0,1)$, $\reg>4$, $\colour\in h^{\reg}(\ZZ_{0}^{d})$ be non-trivial and radially symmetric, $\str>0$ such that
	\begin{equation*}
		C_{W}^{(\eta)}-(1-\eta)<\norm{\colour}_{h^{-1}(\ZZ^{d}_{0})}^{2}C_{d}\str^{2}
	\end{equation*}
	and $A\subset\init{1}$ be closed and bounded.
	Then there exists some $c>0$ such that uniformly in $\rho_{0}\in A$ and $\eps>0$, 
	\begin{equation*}
		\liminf_{t\to\infty}\mathbb{P}\bigl(\rds(t,\place,\rho_{0})\in B_{\eps}^{0}(\unif)\bigr)\geq c\,.
	\end{equation*}
\end{lemma}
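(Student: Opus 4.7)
The plan is to chain together the three pillars of this section — asymptotic stability, reachability, and regularisation — via the cocycle property and the independence of noise increments. First, Theorem~\ref{thm:asymptotic_stability} supplies a deterministic radius $\delta>0$ such that the event
$$E\defeq\Bigl\{\omega\in\Omega:\lim_{n\to\infty}\diam\bigl(\rds(n,\omega,B_{\delta}^{0}(\unif))\bigr)=0\Bigr\}$$
has strictly positive probability $p\defeq\PP(E)>0$. Feeding this $\delta$ into Lemma~\ref{lem:dynamics_regularisation} produces $\delta'>0$ and $t_{2}>0$, and then Theorem~\ref{thm:reachability_uniform} applied to the pair $(\delta',A)$ yields $t_{1}=t_{1}(\delta',A)\geq 0$ together with a uniform lower bound $C(\delta',A)>0$ on $\PP(\rds(t_{1},\cdot,\rho_{0})\in B_{\delta'}^{-1}(\unif))$ for $\rho_{0}\in A$.

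Combining the last two facts, for each $\rho_{0}\in A$ the event
$$F_{\rho_{0}}\defeq\bigl\{\rds(t_{1},\cdot,\rho_{0})\in B_{\delta'}^{-1}(\unif)\text{ and }\rds(t_{1}+t_{2},\cdot,\rho_{0})\in B_{\delta}^{0}(\unif)\bigr\}$$
satisfies $\PP(F_{\rho_{0}})>\tfrac{1}{2}C(\delta',A)$. The event $F_{\rho_{0}}$ is $\mathcal{F}_{0,t_{1}+t_{2}}$-measurable, while $\shift_{t_{1}+t_{2}}^{-1}(E)$ depends only on the noise increments after time $t_{1}+t_{2}$, so the two events are independent. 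Using in addition the $(\shift_{t})$-invariance of $\PP$,
$$\PP\bigl(F_{\rho_{0}}\cap\shift_{t_{1}+t_{2}}^{-1}(E)\bigr)=\PP(F_{\rho_{0}})\PP(E)>\tfrac{1}{2}C(\delta',A)\,p\eqdef c>0,$$
with $c$ independent of both $\rho_{0}\in A$ and $\eps>0$.

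On this intersection the cocycle identity gives
$$\rds(t_{1}+t_{2}+n,\omega,\rho_{0})=\rds(n,\shift_{t_{1}+t_{2}}\omega,\rho_{1}),\qquad\rho_{1}\defeq\rds(t_{1}+t_{2},\omega,\rho_{0})\in B_{\delta}^{0}(\unif),$$
and the contraction guaranteed by $E$ forces $\rds(t_{1}+t_{2}+n,\omega,\rho_{0})\to\unif$ in $L^{2}(\TT^{d})$; time-continuity of the RDS bridges the non-integer instants. Thus on $F_{\rho_{0}}\cap\shift_{t_{1}+t_{2}}^{-1}(E)$ we have $\rds(t,\omega,\rho_{0})\in B_{\eps}^{0}(\unif)$ for all sufficiently large $t$, and an application of Fatou's lemma along any sequence $t_{k}\to\infty$ then yields
$$\liminf_{t\to\infty}\PP\bigl(\rds(t,\cdot,\rho_{0})\in B_{\eps}^{0}(\unif)\bigr)\geq\PP\bigl(F_{\rho_{0}}\cap\shift_{t_{1}+t_{2}}^{-1}(E)\bigr)\geq c,$$
uniformly in $\rho_{0}\in A$ and $\eps>0$. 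I expect the main technical point to be the measurability and independence step — in particular verifying that $E$ lies in the $\sigma$-algebra generated by the noise on $[0,\infty)$ so that $\shift_{t_{1}+t_{2}}^{-1}(E)$ is independent of $\mathcal{F}_{0,t_{1}+t_{2}}$ — while everything else is a mechanical concatenation of the preceding results.
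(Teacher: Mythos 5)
Your proof is correct and takes essentially the same approach as the paper, chaining Theorems~\ref{thm:asymptotic_stability} and~\ref{thm:reachability_uniform} and Lemma~\ref{lem:dynamics_regularisation} via the cocycle property, independence of noise increments, and Fatou's lemma. The only presentational difference is that you intersect $F_{\rho_{0}}$ directly with the shifted infinite-horizon event $\shift_{t_{1}+t_{2}}^{-1}(E)$ — which is fine, since $E$ is measurable with respect to the noise on $[0,\infty)$, so $\shift_{t_{1}+t_{2}}^{-1}(E)$ is indeed independent of $\mathcal{F}_{0,t_{1}+t_{2}}$ — whereas the paper first converts the asymptotic-stability event into finite-time bounds $\PP\bigl(\rds(n,\place,B_{\delta}^{0}(\unif))\subset B_{\eps}^{0}(\unif)\bigr)>\tfrac{1}{2}C(\delta)$ for all $n\geq N(\eps)$ and only then multiplies probabilities of finite-horizon events, which sidesteps the measurability step you flag. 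One small caveat, shared with the paper: Theorem~\ref{thm:asymptotic_stability} gives contraction only along integer times, so your remark that ``time-continuity of the RDS bridges the non-integer instants'' is not automatic (pathwise continuity supplies no modulus uniform in $n$); both your proof and the paper's really establish the lower bound along the sequence $t=t_{1}+t_{2}+n$, which is all that is used in the proof of Theorem~\ref{thm:main_result}.
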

\begin{proof}
	Let~$\delta$ be as in Theorem~\ref{thm:asymptotic_stability}, $\delta'>0$ and~$t_{2}$ be as in Lemma~\ref{lem:dynamics_regularisation} and $t_{1}=t_{1}(\delta',A)$ be as in Theorem~\ref{thm:reachability_uniform}. We proceed in several steps.

	\emph{Step 1 (asymptotic stability):}
	An application of Theorem~\ref{thm:asymptotic_stability} yields
	\begin{equation*}
		C(\delta)\defeq\PP\bigl(\lim_{n\to\infty}\diam(\rds(n,\place,B^{0}_{\delta}(\unif)))=0\bigr)>0\,.
	\end{equation*}
	In particular it follows that, for every $\eps>0$,
	\begin{equation*}
		\begin{split}
			C(\delta)&\leq\mathbb{P}\bigl(\limsup_{n\to\infty}\diam\rds(n,\place,B_{\delta}^{0}(\unif))<\eps\bigr)\\
			&=\mathbb{P}\bigl(\exists n\in\NN\,\forall j\geq n\,\diam\rds(j,\place,B_{\delta}^{0}(\unif))<\eps\bigr)\\
			&=\mathbb{P}\bigl(\liminf_{n\to\infty}\{\diam\rds(n,\place,B_{\delta}^{0}(\unif))<\eps\}\bigr)\\
			&\leq\liminf_{n\to\infty}\mathbb{P}\bigl(\diam\rds(n,\place,B_{\delta}^{0}(\unif))<\eps\bigr)\\
			&\leq\liminf_{n\to\infty}\mathbb{P}\bigl(\rds(n,\place,B_{\delta}^{0}(\unif))\subset B_{\eps}^{0}(\unif)\bigr)\,.
		\end{split}
	\end{equation*}
	Hence, for all $\eps>0$ there exists some $N=N(\eps)\in\mathbb{N}$ such that uniformly in $n\geq N$,
	\begin{equation}\label{eq:step_asymptotic_stability}
		\mathbb{P}\bigl(\rds(n,\place,B_{\delta}^{0}(\unif))\subset B_{\eps}^{0}(\unif)\bigr)>\frac{1}{2}C(\delta)\,.
	\end{equation}

	\emph{Step 2 (reachability):} 
	It follows by Theorem~\ref{thm:reachability_uniform}, that there exists some constant $C(\delta',A)$ such that uniformly in $\rho_{0}\in A$,
	\begin{equation}\label{eq:step_reachability}
		\mathbb{P}\bigl(\rds(t_{1},\place,\rho_{0})\in B^{-1}_{\delta'}(\unif)\bigr)\geq C(\delta',A)\,.
	\end{equation}

	\emph{Step 3 (regularisation):}
	An application of Lemma~\ref{lem:dynamics_regularisation} combined with~\eqref{eq:step_reachability} yields
	\begin{equation}\label{eq:step_regularisation}
		\mathbb{P}\bigl(\rds(t_{1},\place,\rho_{0})\in B_{\delta'}^{-1}(\unif),\,\rds(t_{1}+t_{2},\place,\rho_{0})\in B_{\delta}^{0}(\unif)\bigr)>\frac{1}{2}C(\delta',A)\,.
	\end{equation}
	
	\emph{Step 4 (decomposition):}
	An application of the cocycle property of $\rds$ yields
	\begin{equation*}
		\begin{split}
			&\mathbb{P}\bigl(\rds(t_{1}+t_{2}+n,\place,\rho_{0})\in B_{\eps}^{0}(\unif)\bigr)\\
			&\geq\PP\bigl(\rds(t_{1},\place,\rho_{0})\in B_{\delta'}^{-1}(\unif),\,\rds(t_{1}+t_{2},\place,\rho_{0})\in B_{\delta}^{0}(\unif),\\
			&\multiquad[3]\rds(t_{1}+t_{2}+n,\place,\rho_{0})\in B_{\eps}^{0}(\unif)\bigr)\\
			&=\mathbb{P}\bigl(\rds(t_{1},\place,\rho_{0})\in B_{\delta'}^{-1}(\unif),\,\rds(t_{1}+t_{2},\place,\rho_{0})\in B_{\delta}^{0}(\unif),\\
			&\multiquad[3]\rds(n,\shift_{t_{1}+t_{2}}\place,\rds(t_{1}+t_{2},\place,\rho_{0}))\in B_{\eps}^{0}(\unif)\bigr)\\
			&\geq\mathbb{P}\bigl(\rds(t_{1},\place,\rho_{0})\in B_{\delta'}^{-1}(\unif),\,\rds(t_{1}+t_{2},\place,\rho_{0})\in B_{\delta}^{0}(\unif),\\
			&\multiquad[3]\rds(n,\shift_{t_{1}+t_{2}}\place,B_{\delta}^{0}(\unif))\subset B_{\eps}^{0}(\unif)\bigr)\,.
		\end{split}
	\end{equation*}
	Note that $\rds(t_{1},\place,\rho_{0})$ and $\rds(t_{1}+t_{2},\place,\rho_{0})$ are $\mathcal{F}_{0,t_{1}+t_{2}}$-measurable, and that $\rds(n,\shift_{t_{1}+t_{2}}\place,B_{\delta}^{0}(\unif))$ is $\mathcal{F}_{t_{1}+t_{2},t_{1}+t_{2}+n}$-measurable. 
	Hence, it follows by the independence of $\mathcal{F}_{0,t_{1}+t_{2}}$ and $\mathcal{F}_{t_{1}+t_{2},t_{1}+t_{2}+n}$, that
	\begin{equation*}
		\begin{split}
			&\mathbb{P}\bigl(\rds(t_{1},\place,\rho_{0})\in B_{\delta'}^{-1}(\unif),\,\rds(t_{1}+t_{2},\place,\rho_{0})\in B_{\delta}^{0}(\unif),\\
			&\multiquad[3]\rds(n,\shift_{t_{1}+t_{2}}\place,B_{\delta}^{0}(\unif))\subset B_{\eps}^{0}(\unif)\bigr)\\
			&=\mathbb{P}\bigl(\rds(t_{1},\place,\rho_{0})\in B_{\delta'}^{-1}(\unif),\,\rds(t_{1}+t_{2},\place,\rho_{0})\in B_{\delta}^{0}(\unif)\bigr)\\
			&\quad\times\PP\bigl(\rds(n,\shift_{t_{1}+t_{2}}\place,B_{\delta}^{0}(\unif))\subset B_{\eps}^{0}(\unif)\bigr)\,.
		\end{split}
	\end{equation*}
	Using that $\PP$ is $(\shift_{t})_{t\in\RR}$-invariant, we obtain
	\begin{equation*}
		\PP\bigl(\rds(n,\shift_{t_{1}+t_{2}}\place,B_{\delta}^{0}(\unif))\subset B_{\eps}^{0}(\unif)\bigr)=\PP\bigl(\rds(n,\place,B_{\delta}^{0}(\unif))\subset B_{\eps}^{0}(\unif)\bigr)\,.
	\end{equation*}
	All in all,
	\begin{equation}\label{eq:step_decomposition}
		\begin{split}
			&\mathbb{P}\bigl(\rds(t_{1}+t_{2}+n,\place,\rho_{0})\in B_{\eps}^{0}(\unif)\bigr)\\
			&\geq\mathbb{P}\bigl(\rds(t_{1},\place,\rho_{0})\in B_{\delta'}^{-1}(\unif),\,\rds(t_{1}+t_{2},\place,\rho_{0})\in B_{\delta}^{0}(\unif)\bigr)\\
			&\quad\times\PP\bigl(\rds(n,\place,B_{\delta}^{0}(\unif))\subset B_{\eps}^{0}(\unif)\bigr)\,.
		\end{split}
	\end{equation}

	\emph{Step 5 (conclusion):}
	Combining~\eqref{eq:step_asymptotic_stability},~\eqref{eq:step_regularisation}~\&~\eqref{eq:step_decomposition}, we obtain that for all $\eps>0$ there exists some $N=N(\eps)\in\mathbb{N}$ such that uniformly in $n\geq N$ and $\rho_{0}\in A$,
	\begin{equation*}
		\mathbb{P}\bigl(\rds(t_{1}+t_{2}+n,\place,\rho_{0})\in B_{\eps}^{0}(\unif)\bigr)>\frac{1}{4}C(\delta',A)C(\delta)\,.
	\end{equation*}
	Therefore, it follows that uniformly in $\eps>0$ and $\rho_{0}\in A$,
	\begin{equation*}
		\liminf_{t\to\infty}\mathbb{P}\bigl(\rds(t,\place,\rho_{0})\in B_{\eps}^{0}(\unif)\bigr)\geq\frac{1}{4}C(\delta',A)C(\delta)\,,
	\end{equation*}
	which yields the claim.
\end{proof}
We are now in a position to prove our main result.
\begin{proof}[{Proof of Theorem~\ref{thm:main_result}}]
	Let $(t,\omega,\rho_{0})\mapsto\rds(t,\omega,\rho_{0})$ denote the RDS solution of the SPDE~\eqref{eq:SPDE} constructed in Theorem~\ref{thm:RDS_for_SPDE} and let $\meas\from\mathcal{B}(\init{0})\to[0,1]$
	be an invariant Borel probability measure such
	that $\meas\neq\delta_{\unif}$.
	It follows by the invariance of~$\meas$, that for all $t>0$ and $A\in\mathcal{B}(\init{0})$,
	\begin{equation*}
		\meas(A)=\int_{\init{0}}\PP\bigl(\rds(t,\place,\rho_{0})\in A\bigr)\,\meas(\dd\rho_{0})\,.
	\end{equation*}
	The regularity of solutions to~\eqref{eq:SPDE}, yields that~$\meas$ is supported on $\init{1}$. Further, an application of the Lusin--Souslin theorem (see~\cite[Thm.~15.1]{kechris_95}) yields $\mathcal{B}(\init{1})\subset\mathcal{B}(\init{0})$, which implies that~$\meas$ can be restricted to a Borel probability measure on $\init{1}$.
	
	The space $\init{1}$ is Polish, hence Radon, which implies
	that the Borel probability measure~$\meas$ is inner regular. We can then apply
	Lemma~\ref{lem:support_away} to deduce that there exists some compact $A\subset \init{1}$
	such that $\meas(A)>0$ and $\unif\not\in A$.

	Let $\eps>0$ and recall from Section~\ref{sec:prelim} that $B_{\eps}^{0}(\unif)$ denotes the open $L^{2}(\TT^{d})$-ball of radius~$\eps$ around~$\unif$. It follows by the invariance of~$\meas$,
	\begin{equation*}
		\begin{split}
			\meas(B_{\eps}^{0}(\unif))&=\int_{\init{1}}\mathbb{P}\bigl(\rds(t,\place,\rho_{0})\in B_{\eps}^{0}(\unif)\bigr)\,\meas(\dd\rho_{0})\\
			&=\meas(\{\unif\})+\int_{\init{1}\setminus\{\unif\}}\mathbb{P}\bigl(\rds(t,\place,\rho_{0})\in B_{\eps}^{0}(\unif)\bigr)\,\meas(\dd\rho_{0})\\
			&\geq\meas(\{\unif\})+\int_{A}\mathbb{P}\bigl(\rds(t,\place,\rho_{0})\in B_{\eps}^{0}(\unif)\bigr)\,\meas(\dd\rho_{0})\,,
		\end{split}
	\end{equation*}
	so that by Fatou's lemma,
	\begin{equation}\label{eq:mass_transport}
		\meas(B_{\eps}^{0}(\unif))\geq\meas(\{\unif\})+\int_{A}\liminf_{t\to\infty}\mathbb{P}\bigl(\rds(t,\place,\rho_{0})\in B_{\eps}^{0}(\unif)\bigr)\,\meas(\dd\rho_{0})\,.
	\end{equation}
	An application of Lemma~\ref{lem:key_reachability} yields that there exists some $c>0$ independent of~$\eps$ and~$\rho_{0}\in A$, such that,
	\begin{equation}\label{eq:key_reachability}
		\liminf_{t\to\infty}\mathbb{P}\bigl(\rds(t,\place,\rho_{0})\in B_{\eps}^{0}(\unif)\bigr)\geq c\,.
	\end{equation}
	Combining~\eqref{eq:mass_transport} and~\eqref{eq:key_reachability}, we obtain
	\begin{equation*}
		\meas(B_{\eps}^{0}(\unif))\geq\meas(\{\unif\})+c\meas(A)\,.
	\end{equation*}
	Letting $\eps\to0$, we obtain
	\begin{equation*}
		\meas(\{\unif\})\geq\meas(\{\unif\})+c\meas(A)\,,
	\end{equation*}
	which yields a contradiction, since $c>0$ and $\meas(A)>0$.
\end{proof}

\section{Explicit examples}
\label{sec:examples}
We now provide some explicit examples of interaction potentials for which the deterministic dynamics is not globally stable, but our main result (Theorem~\ref{thm:main_result}) applies to yield uniqueness of the invariant probability measure of the stochastic dynamics.

\subsection{Single-mode interaction potential: Second-order phase transition}
\label{subsec:single_mode}
We choose the interaction potential~$W$ to be given by
\begin{equation}
	W(x)=-N_k\prod_{i=1}^{d}\cos(2\pi k_{i}x_{i})\, ,
	\label{eq:single_mode_interaction}
\end{equation} 
where $k =(k_1,\ldots,k_d)\in\ZZ^d_0$ and $N_k$ is a normalisation constant such that $\norm{W}_{L^2(\TT^d)}=1$. This choice of interaction corresponds to a higher-dimensional version of the noisy Kuramoto model.  It is known (see~\cite{carrillo_gvalani_pavliotis_schlichting_20}) that this system exhibits a second-order (continuous) phase transition at the critical parameter value $\diffu_{\crit}=N_k^{-1}$. This means that for $\diffu>\diffu_{\crit}$, the uniform state $\unif$ is the unique minimiser of the free-energy functional $E$~\eqref{eq:free_energy_functional}, while for $\diffu<\diffu_{\crit}$, the minimisers are non-uniform steady states of the deterministic dynamics~\eqref{eq:McKean--Vlasov}.  At the critical value $\diffu=\diffu_{\crit}$, the uniform state $\unif$ loses its linear stability (i.e. the point at which eigenvalues of the operator $L$ cross the imaginary axis) and the free energy degenerates near $\unif$.  Thus new minimisers arise continuously from $\unif$ as $\diffu$ crosses below $\diffu_{\crit}$ (see Figure~\ref{fig:second_order_phase_transition}). We have the following corollary as a direct implication of the results of Theorem~\ref{thm:main_result}, see also Remark~\ref{rem:recovering_diffusivity}.
\begin{corollary}
	Let $W$ be as in~\eqref{eq:single_mode_interaction} and $\colour\in h^{\reg}(\ZZ_0^d)$ be non-trivial and radially symmetric with $\reg>4$. If $\diffu\geq N_k^{-1}$, then for any $\str>0$, the SPDE~\eqref{eq:SPDE_intro} admits the unique invariant probability measure $\delta_{\unif}$.

	On the other hand, if $\diffu< N_k^{-1}$ and 
	\begin{equation*}
	\abs{k}^2(N_k^{-1}-\diffu')-(\diffu-\diffu')<\norm{\colour}_{h^{-1}(\ZZ^{d}_{0})}^{2}C_{d}\str^{2}\, ,
	\end{equation*}
	for some $\diffu'<\diffu$, where $C^{(\diffu')}_{W}$ (resp.~$C_{d}$) is as in~\eqref{eq:maximal_growth_rate} (resp.~\eqref{eq:dimension_constant}), then the SPDE~\eqref{eq:SPDE_intro} admits the unique invariant probability measure~$\delta_{\unif}$.
\end{corollary}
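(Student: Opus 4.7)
The plan is to derive both claims as direct applications of Theorem~\ref{thm:main_result} combined with the rescaling in Remark~\ref{rem:recovering_diffusivity}, once the Fourier spectrum of the specific potential~\eqref{eq:single_mode_interaction} has been identified.

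I would first expand each factor $\cos(2\uppi k_{i}x_{i})$ as a sum of two complex exponentials and multiply out; this shows that $\hat{W}$ is supported on the $2^{d}$ ``corner'' modes $\{(\eps_{1}k_{1},\ldots,\eps_{d}k_{d}):\eps\in\{\pm1\}^{d}\}$, on which it takes the common value $-N_{k}/2^{d}$, while Parseval's identity combined with the normalisation $\norm{W}_{L^{2}(\TT^{d})}=1$ forces $N_{k}=2^{d/2}$. Hence $\hat{W}$ equals $-N_{k}^{-1}$ on each corner, and crucially every corner has the same Euclidean norm $\abs{k}$.

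For the supercritical case $\diffu\geq N_{k}^{-1}$, the rescaled unstable set of Remark~\ref{rem:recovering_diffusivity} is
\begin{equation*}
	\Lambda_{\tilde{W}}^{(\diffu')}=\{j\in\ZZ_{0}^{d}:\diffu'+\diffu\hat{W}(j)<0\}\,,
\end{equation*}
which equals the corner set when $\diffu'<\diffu N_{k}^{-1}$ and is empty otherwise. Since $N_{k}\geq2$ for $d\geq2$, the interval $[\diffu N_{k}^{-1},\diffu)$ is non-empty; picking any $\diffu'$ in it gives $C_{\tilde{W}}^{(\diffu')}=0$, so the condition of Remark~\ref{rem:recovering_diffusivity} collapses to $-(\diffu-\diffu')<\norm{\colour}_{h^{-1}(\ZZ_{0}^{d})}^{2}C_{d}\tilde{\str}^{2}$, which is automatic for every $\str>0$.

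For the subcritical case $\diffu<N_{k}^{-1}$, I would pick $\diffu'<\diffu N_{k}^{-1}$ (the inequality $N_{k}>1$ ensures $\diffu'<\diffu$ automatically), so that $\Lambda_{\tilde{W}}^{(\diffu')}$ coincides with the $2^{d}$ corners and the uniformity of both $\hat{W}$ and the wavenumber norm on these modes yields $C_{\tilde{W}}^{(\diffu')}=(\diffu N_{k}^{-1}-\diffu')\abs{k}^{2}$. Substituting this together with $\tilde{\str}^{2}=\diffu\str^{2}$ into the condition of Remark~\ref{rem:recovering_diffusivity} reproduces the inequality stated in the corollary (up to the standard rescaling by $\diffu$), and Theorem~\ref{thm:main_result} then yields uniqueness of $\delta_{\unif}$. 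The whole argument is essentially a Fourier bookkeeping exercise, so no real technical obstacle is expected.
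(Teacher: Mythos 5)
Your overall route --- expand $W$ in Fourier, observe $\hat W\equiv-N_k^{-1}$ on a finite set of corner modes all of Euclidean norm $\abs{k}$, then feed this into Theorem~\ref{thm:main_result} via Remark~\ref{rem:recovering_diffusivity} --- is exactly what the paper intends (it gives no detailed proof, only the pointer to the theorem and the remark). The normalisation-to-coefficient identity $\hat W=-N_k^{-1}$ on the corners is correct and robust, although two cosmetic slips are worth noting: the value of $N_k$ is $2^{m/2}$ with $m$ the number of nonzero components of $k$ (so $N_k\geq\sqrt2$, not necessarily $\geq2$), and the number of corner modes is $2^m$, not always $2^d$.

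The real problem is a spurious factor of $\diffu$ inside the rescaled unstable set and growth rate. In Remark~\ref{rem:recovering_diffusivity} the target equation~\eqref{eq:SPDE_diffusivity} carries interaction $\tilde W$ and noise strength $\tilde\str$, so when you identify~\eqref{eq:SPDE_diffusivity} with the corollary's~\eqref{eq:SPDE_intro} you must take $\tilde W$ to be the corollary's $W$ itself and $\tilde\str$ its $\str$. The relevant unstable set and growth rate are therefore
\begin{equation*}
	\Lambda_{\tilde W}^{(\diffu')}=\{j\in\ZZ_0^d:\diffu'+\hat W(j)<0\},\qquad
	C_{\tilde W}^{(\diffu')}=\max_{j\in\Lambda_{\tilde W}^{(\diffu')}}\abs{\diffu'+\hat W(j)}\abs{j}^2\,,
\end{equation*}
with \emph{no} extra factor of $\diffu$ multiplying $\hat W$. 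On the corner modes this gives $C_{\tilde W}^{(\diffu')}=(N_k^{-1}-\diffu')\abs{k}^2$ whenever $\diffu'<N_k^{-1}$, which, inserted into the remark's condition $C_{\tilde W}^{(\diffu')}-(\diffu-\diffu')<\norm{\colour}_{h^{-1}}^2C_d\str^2$, reproduces the corollary's inequality exactly. Your $\Lambda_{\tilde W}^{(\diffu')}=\{\diffu'+\diffu\hat W(j)<0\}$ and $C_{\tilde W}^{(\diffu')}=(\diffu N_k^{-1}-\diffu')\abs{k}^2$ produce a different inequality, and ``up to the standard rescaling by $\diffu$'' does not reconcile the two: renaming $\diffu'$ by a factor $\diffu$ fixes the $C$-term but breaks the $(\diffu-\diffu')$ and $\str^2$ terms simultaneously. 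Relatedly, in the supercritical part the correct threshold for the unstable set to be empty is $\diffu'\geq N_k^{-1}$, not $\diffu'\geq\diffu N_k^{-1}$; for $N_k^{-1}\leq\diffu<1$ your choice of $\diffu'$ can land below $N_k^{-1}$ so that $C_{\tilde W}^{(\diffu')}\neq0$, and the borderline case $\diffu=N_k^{-1}$ requires a separate limiting argument ($\diffu'\uparrow\diffu$, using $C_d\norm{\colour}_{h^{-1}}^2\str^2>0$), which your present bookkeeping silently bypasses.
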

The above result implies that when $\diffu\geq  N_k^{-1}$, the addition of even a small amount of noise ($\str>0$) is sufficient to ensure that~$\delta_{\unif}$ is the unique invariant probability measure of the stochastic dynamics.  More interestingly, when $\diffu<N_k^{-1}$, i.e.\ in the regime where the deterministic dynamics admits multiple stable steady states, then provided the noise strength $\str>0$ is sufficiently large (depending on how far $\diffu$ is below the critical value $N_k^{-1}$), we again have that~$\delta_{\unif}$ is the unique invariant probability measure of the stochastic dynamics. 
\begin{figure}[htb]
    \centering
	\begin{tikzpicture}[scale=1.2]
		%
		\draw [black,thick] plot [smooth, tension=1.3] coordinates {(-0.7,1.5) (0,0) (0.7,1.5)};
		\draw[blue,fill=blue] (0,0) circle (.5ex);

		\draw [black,thick] plot [smooth, tension=0.8] coordinates {(1.8,1.5) (2.0,0.39) (2.5,0) (3.0,0.39) (3.2,1.5)};
		\draw[blue,fill=blue] (2.5,0) circle (.5ex);

		\draw [black,thick] plot [smooth, tension=1.0] coordinates {(4.2,1.5) (4.4,-0.2) (5.0,0) (5.6,-0.2) (5.8,1.5)};
		\draw (5.0,0) node[shape=cross out, draw=red, line width=2pt, inner sep=0pt, minimum size=4pt] {};
		\draw[blue,fill=blue] (4.55,-0.35) circle (.5ex);
		\draw[blue,fill=blue] (5.45,-0.35) circle (.5ex);

		\draw[->,thick] (-0.7,-1) -- (5.8,-1);
		\node[above] at (0,-1) {\footnotesize $\diffu>\diffu_{\crit}$};
		\node[above] at (2.5,-1) {\footnotesize $\diffu=\diffu_{\crit}$};
		\node[above] at (5,-1) {\footnotesize $\diffu<\diffu_{\crit}$};
	\end{tikzpicture}
	\caption{Schematic depiction of the free-energy landscape for the single-mode interaction potential in Section~\ref{subsec:single_mode} as it undergoes a second-order phase transition with critical parameter $\diffu_{\crit}$. The blue dots indicate the stable steady states, while the red cross indicates an unstable one. The central point in each panel corresponds to the uniform state $\unif$.}
	\label{fig:second_order_phase_transition}
\end{figure}
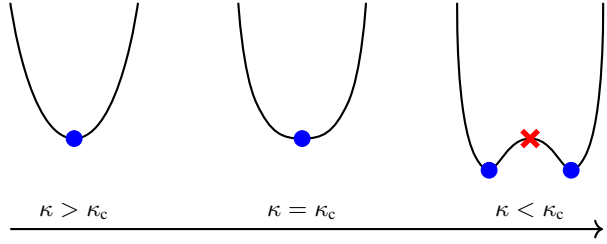
\begin{figure}[htb]
    \centering
    \begin{tikzpicture}
		%
		\draw [black,thick] plot [smooth, tension=1.5] coordinates {(-1,2) (0,0) (1,2)};
		\draw[blue,fill=blue] (0,0) circle (.5ex);

		\draw [black,thick] plot [smooth, tension=1.1] coordinates {(2,2) (2.25,0.7) (2.7,0.6) (3,0) (3.3,0.6) (3.75,0.7) (4,2)};
		\draw[blue,fill=blue] (3,0) circle (.5ex);
		\draw[blue,fill=blue] (3.6,0.52) circle (.5ex);
		\draw (3.4,0.62) node[shape=cross out, draw=red, line width=2pt, inner sep=0pt, minimum size=4pt] {};
		\draw[blue,fill=blue] (2.4,0.52) circle (.5ex);
		\draw (2.6,0.62) node[shape=cross out, draw=red, line width=2pt, inner sep=0pt, minimum size=4pt] {};

		\draw [black,thick] plot [smooth, tension=0.9] coordinates {(5,2) (5.25,0.7) (5.5,0) (5.7,0.62) (6,0) (6.3,0.62) (6.5,0) (6.75,0.7) (7,2)};
		\draw[blue,fill=blue] (6,0) circle (.5ex);
		\draw[blue,fill=blue] (5.5,0) circle (.5ex);
		\draw[blue,fill=blue] (6.5,0) circle (.5ex);
		\draw (5.7,0.62) node[shape=cross out, draw=red, line width=2pt, inner sep=0pt, minimum size=4pt] {};
		\draw (6.3,0.62) node[shape=cross out, draw=red, line width=2pt, inner sep=0pt, minimum size=4pt] {};

		\draw [black,thick] plot [smooth, tension=0.9] coordinates {(8,2) (8.25,0.7) (8.5,-0.3) (8.7,0.62) (9,0) (9.3,0.62) (9.5,-0.3) (9.75,0.7) (10,2)};
		\draw[blue,fill=blue] (9,0) circle (.5ex);
		\draw[blue,fill=blue] (8.5,-0.3) circle (.5ex);
		\draw[blue,fill=blue] (9.5,-0.3) circle (.5ex);
		\draw (8.7,0.62) node[shape=cross out, draw=red, line width=2pt, inner sep=0pt, minimum size=4pt] {};
		\draw (9.3,0.62) node[shape=cross out, draw=red, line width=2pt, inner sep=0pt, minimum size=4pt] {};

		\draw[->,thick] (-1,-1) -- (10,-1);
		\node[above] at (0,-1) {\footnotesize $\diffu\gg\diffu_{\crit}$};
		\node[above] at (3,-1) {\footnotesize $\diffu>\diffu_{\crit}$};
		\node[above] at (6,-1) {\footnotesize $\diffu=\diffu_{\crit}$};
		\node[above] at (9,-1) {\footnotesize $\diffu<\diffu_{\crit}$};
	\end{tikzpicture}
	\caption{Schematic depiction of the free-energy landscape for the two-mode interaction potential in Section~\ref{subsec:two_mode} as it undergoes a first-order phase transition with critical parameter $\diffu_{\crit}$. The blue dots indicate stable steady states, while the red crosses indicate unstable ones. The central point in each panel corresponds to the uniform state $\unif$.}
	\label{fig:first_order_phase_transition}
\end{figure}
\subsection{Two-mode interaction potential: First-order phase transition}
\label{subsec:two_mode}
We now consider the potential
\begin{equation}
	W(x)=-N_{k}\prod_{i=1}^{d}\cos(2\pi k_{i}x_{i})-N_\ell \prod_{i=1}^{d}\cos(2\pi \ell_{i}x_{i})\, ,
	\label{eq:two_mode_interaction}
\end{equation}
where $k,\ell\in\ZZ^d_0$ are such that $k=2\ell$ and $N_k,N_\ell$ are normalisation constants such that $\norm{W}_{L^2(\TT^d)}=1$. Note that $k=2\ell$ enforces $N_k=N_\ell$. 
This choice of interaction potential is known to lead to a first-order (discontinuous) phase transition at some critical parameter value~$\diffu_{\crit}$ (see~\cite[Theorem 5.11]{carrillo_gvalani_pavliotis_schlichting_20}), i.e.\ there exists some~$\diffu_{\crit}$ such that for $\diffu>\diffu_{\crit}$, the uniform state~$\unif$ is the unique minimiser of the free-energy functional~$E$~\eqref{eq:free_energy_functional}, while for $\diffu<\diffu_{\crit}$, there exist additional minimisers which are non-uniform steady states of the deterministic dynamics~\eqref{eq:McKean--Vlasov}. In this setting, however the new minimisers arise discontinuously from~$\unif$ as~$\diffu$ crosses below~$\diffu_{\crit}$. This could happen, for example, through a saddle-node bifurcation where a pair of stable and unstable non-uniform steady states are created away from~$\unif$ (see Figure~\ref{fig:first_order_phase_transition}). The uniform state~$\unif$ remains linearly stable at and beyond the critical value $\diffu=\diffu_{\crit}$, i.e.\ eigenvalues of the operator~$L$ do not cross the imaginary axis at $\diffu=\diffu_{\crit}$. It can be shown that the uniform state $\unif$ loses its linear stability only at some $\diffu_{\sharp}<\diffu_{\crit}$, which is given exactly by $\diffu_{\sharp}=N_k^{-1}$. We have the following corollary as a direct implication of the results of Theorem~\ref{thm:main_result}.
\begin{corollary}
	Let $W$ be as in~\eqref{eq:two_mode_interaction} and $\colour\in h^{\reg}(\ZZ_0^d)$ be non-trivial and radially symmetric with $\reg>3$. If $\diffu\geq \diffu_{\sharp}=N_k^{-1}$, then for any $\str>0$, the SPDE~\eqref{eq:SPDE_intro} admits the unique invariant probability measure~$\delta_{\unif}$. 
	On the other hand, if $\diffu< \diffu_{\sharp}$ and
	\begin{equation*}
		\abs{k}^2(N_k^{-1}-\diffu')-(\diffu-\diffu')<\norm{\colour}_{h^{-1}(\ZZ^{d}_{0})}^{2}C_{d}\str^2\, ,
	\end{equation*}
	for some $\diffu'<\diffu$, where $C^{(\diffu')}_{W}$ (resp.~$C_{d}$) is as in~\eqref{eq:maximal_growth_rate} (resp.~\eqref{eq:dimension_constant}), then the SPDE~\eqref{eq:SPDE_intro} admits the unique invariant probability measure~$\delta_{\unif}$.
\end{corollary}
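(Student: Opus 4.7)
The plan is to derive both assertions as direct applications of Theorem~\ref{thm:main_result} combined with the rescaling of Remark~\ref{rem:recovering_diffusivity} to accommodate a general diffusivity $\diffu>0$. The only non-trivial ingredient is a computation of the Fourier spectrum of the two-mode potential. Expanding the two product-of-cosines terms in~\eqref{eq:two_mode_interaction} into sums of complex exponentials and using $N_k=N_\ell$, one finds that $\hat W$ is supported on the union of the two spheres $\{\epsilon\odot k:\epsilon\in\{-1,+1\}^d\}$ and $\{\epsilon\odot\ell:\epsilon\in\{-1,+1\}^d\}$ of radii $|k|$ and $|\ell|=|k|/2$, and that $\hat W$ takes a common negative value on this support.

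For the case $\diffu\geq\diffu_\sharp$, the linearised operator $L\lin=\diffu\Delta \lin+\Delta W\ast \lin$ has non-positive spectrum on $L^{2}_{0}(\TT^{d})$ by the above computation. Setting $\tilde W\defeq\diffu W$ as in Remark~\ref{rem:recovering_diffusivity}, one can then choose $\diffu'\in(0,\diffu)$ strictly greater than $\diffu\max_{m}(-\hat W(m))$; this forces $\Lambda_{\tilde W}^{(\diffu')}$ to be empty, hence $C_{\tilde W}^{(\diffu')}=0$. The sufficient condition of Remark~\ref{rem:recovering_diffusivity} then reduces to $-(\diffu-\diffu')<\norm{\colour}_{h^{-1}(\ZZ^{d}_{0})}^{2}C_{d}\str^{2}$, which holds for every $\str>0$, and Theorem~\ref{thm:main_result} yields uniqueness.

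For the case $\diffu<\diffu_\sharp$, picking $\diffu'$ small enough makes $\Lambda_{\tilde W}^{(\diffu')}$ coincide with the full support of $\hat W$ identified above. On this set the factor $|\diffu'+\hat{\tilde W}(m)|$ is constant in $m$ and, since $|k|>|\ell|$, the maximum defining $C_{\tilde W}^{(\diffu')}$ in~\eqref{eq:maximal_growth_rate} is attained on the outer sphere $|m|=|k|$. Substituting the resulting value of $C_{\tilde W}^{(\diffu')}$ into the condition of Remark~\ref{rem:recovering_diffusivity} reproduces precisely the hypothesis of the corollary, so Theorem~\ref{thm:main_result} applies and gives uniqueness of $\delta_{\unif}$.

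The main obstacle is essentially bookkeeping: one has to verify that the maximum in~\eqref{eq:maximal_growth_rate} is attained on the outer sphere $|m|=|k|$ rather than on $|m|=|\ell|$, which uses the constraint $k=2\ell$, and to track constants through the rescaling of Remark~\ref{rem:recovering_diffusivity} in order to reach the precise inequality $|k|^{2}(N_k^{-1}-\diffu')-(\diffu-\diffu')<\norm{\colour}_{h^{-1}(\ZZ^{d}_{0})}^{2}C_{d}\str^{2}$ stated in the corollary.
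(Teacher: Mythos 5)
Your overall route coincides with the paper's: the corollary is meant to follow by feeding the explicit Fourier data of the two-mode potential into Theorem~\ref{thm:main_result} via the rescaling of Remark~\ref{rem:recovering_diffusivity}, and your Fourier bookkeeping is the right content — $\hat W$ is supported on the two spheres $\lvert m\rvert=\lvert k\rvert$ and $\lvert m\rvert=\lvert \ell\rvert$ with a common negative value (the numbers of vanishing components of $k$ and $\ell$ agree since $k=2\ell$, and $N_k=N_\ell$), so the maximum in~\eqref{eq:maximal_growth_rate} is attained on the outer sphere because the prefactor $\lvert\diffu'+\hat W(m)\rvert$ is constant there and $\lvert k\rvert=2\lvert\ell\rvert$.

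There is, however, a concrete slip in exactly the step you flag as the main obstacle. In Remark~\ref{rem:recovering_diffusivity}, $\tilde W=\diffu W$ is the potential of the diffusivity-$\diffu$ equation~\eqref{eq:SPDE_diffusivity}, where $W$ denotes the potential of the \emph{unit-diffusivity} equation. In the corollary, the two-mode potential~\eqref{eq:two_mode_interaction} is already the potential of~\eqref{eq:SPDE_intro} with diffusivity $\diffu$, so it plays the role of $\tilde W$; the relevant condition is $C_{W}^{(\diffu')}-(\diffu-\diffu')<\norm{\colour}_{h^{-1}(\ZZ^{d}_{0})}^{2}C_{d}\str^{2}$ with the corollary's $W$ itself (equivalently, apply Theorem~\ref{thm:main_result} to the unit-diffusivity potential $W/\diffu$ and noise $\str/\sqrt{\diffu}$). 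Your prescription $\tilde W\defeq\diffu W$, with $W$ the two-mode potential, inserts a spurious factor of $\diffu$: followed literally it gives $C_{\tilde W}^{(\diffu')}=\lvert k\rvert^{2}(\diffu N_k^{-1}-\diffu')$ and hence an inequality with $\diffu N_k^{-1}$ in place of $N_k^{-1}$, which is not the corollary's hypothesis and pertains to the SPDE with potential $\diffu W$ rather than~\eqref{eq:SPDE_intro}; so the claim that the substitution ``reproduces precisely'' the stated inequality fails under your own definitions (the fix is simply to drop the extra factor). Two smaller points: (i) with the correct identification, the borderline case $\diffu=\diffu_{\sharp}=N_k^{-1}$ in the first assertion cannot be handled by emptying the unstable set, since that would require $\diffu'\geq N_k^{-1}=\diffu$ while $\diffu'<\diffu$; instead let $\diffu'\uparrow\diffu$, so that $C_{W}^{(\diffu')}-(\diffu-\diffu')=(\lvert k\rvert^{2}-1)(\diffu-\diffu')\to0$, which is eventually below $\norm{\colour}_{h^{-1}(\ZZ^{d}_{0})}^{2}C_{d}\str^{2}$ for any $\str>0$; (ii) your argument invokes Theorem~\ref{thm:main_result}, which assumes $\reg>4$, whereas the corollary is stated with $\reg>3$ — worth noting explicitly (this appears to be a typo in the statement, as the single-mode corollary assumes $\reg>4$).
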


\paragraph{Acknowledgements.} 
BG acknowledges support by the Max Planck Society through the Research Group ``Stochastic Analysis in the Sciences.'' 
This work was funded by the European Union (ERC, FluCo, grant agreement No.\ 101088488). Views and opinions expressed are however those of the author(s) only and do not necessarily reflect those of the European Union or of the European Research Council. Neither the European Union nor the granting authority can be held responsible for them.
The work of RG is partially funded by the Deutsche Forschungsgemeinschaft (DFG, German Research Foundation) - SPP 2410 Hyperbolic Balance Laws in Fluid Mechanics: Complexity, Scales, Randomness (CoScaRa).


\appendix

\section{Well-posedness of stochastic characteristics}\label{app:stochastic_characteristics}

\subsection{Existence as a stochastic flow and random dynamical system}\label{subsec:stochastic_characteristics_RDS_flow}
In this subsection we show that the SDE~\eqref{eq:stochastic_characteristics_SDE} induces a stochastic flow of diffeomorphisms and a random dynamical system.
\begin{lemma}\label{lem:RDS_for_stochastic_characteristics}
	Let $\colour\in h^{1}(\ZZ_{0}^{d})$
	be radially symmetric and $\str>0$.
	Then there exists a unique (up to indistinguishability) continuous
	RDS $\flow\from[0,\infty)\times\Omega\times\TT^{d}\to\TT^{d}$ over
	$(\Omega,\mathcal{F}^{0},\PP,(\shift_{t})_{t\in\RR},(\mathcal{F}_{s,t})_{s\leq t})$
	which solves the SDE~\eqref{eq:stochastic_characteristics_SDE}.
	In particular, $\flow$ is a stochastic flow of homeomorphisms. 
	Further, for any $\gamma\in(0,1)$, it holds that~$\flow$ is a $C^{0,\gamma}(\TT^{d};\TT^{d})$-valued
	$(\mathcal{F}_{t})_{t\geq0}$-martingale.
	Let $m\in\NN$ and $\delta\in(0,1]$.
	If, in addition, $\colour\in h^{m+\delta}(\ZZ_{0}^{d})$, then~$\flow$
	is a $C^{m}(\TT^{d};\TT^{d})$-RDS, a stochastic flow of volume-preserving $C^{m}(\TT^{d};\TT^{d})$-diffeomorphisms and a $C^{m,\gamma}(\TT^{d};\TT^{d})$-valued $(\mathcal{F}_{t})_{t\geq0}$-martingale for any $\gamma\in(0,\delta)$.
\end{lemma}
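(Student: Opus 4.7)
The plan is to combine Kunita's theory of stochastic flows with a cocycle construction on the canonical Wiener space, using three structural properties of the coefficients in~\eqref{eq:stochastic_characteristics_SDE}: they are smooth, divergence-free, and isotropic (so that the Stratonovich--It\^{o} corrector vanishes). To begin, set $\sigma_k^{(j)}(x)\defeq\colour_{k}a_k^{(j)}e_k(x)$ and rewrite~\eqref{eq:stochastic_characteristics_SDE} in It\^{o} form. Using $\colour_{-k}=\colour_k$, $a_{-k}^{(j)}=a_k^{(j)}$, $e_{-k}(x)=\overline{e_k(x)}$ and pairing the $(k,-k)$ modes, the corrector is proportional to
\begin{equation*}
\sum_{k,j}\colour_k^{2}(a_k^{(j)}\cdot\nabla)(a_k^{(j)}e_k)(x)=\sum_{k,j}\colour_k^{2}\ip{2\uppi\upi k,a_k^{(j)}}\,a_k^{(j)}\,e_k(x)=0\,,
\end{equation*}
because $a_k^{(j)}\in k^\perp$. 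Hence~\eqref{eq:stochastic_characteristics_SDE} is equivalent to a driftless It\^{o} SDE with divergence-free vector field coefficients.

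The second step is to apply Kunita's theorem on SDEs with Gaussian space-dependent driving (see~\cite{kunita_90}, Chapter~4). The assumption $\colour\in h^{m+\delta}(\ZZ_0^d)$ means $\sum_k\abs{k}^{2(m+\delta)}\colour_k^{2}<\infty$, which, together with the uniform boundedness of $a_k^{(j)}$ and $e_k$, ensures that the Kunita local characteristic $a(x,y)\defeq2K^{2}\sum_{k,j}\sigma_k^{(j)}(x)\otimes\sigma_k^{(j)}(y)$ is $C^{m,\gamma}$ in each variable for every $\gamma\in(0,\delta)$; this is a standard Fourier-side computation, using $\sum_k\abs{k}^{2(m+\delta)}\colour_k^{2}<\infty$ to absorb derivatives and a classical Sobolev-type embedding $h^{m+\delta}\embed C^{m,\gamma}$ at the level of Fourier series on $\TT^d$. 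Kunita's existence and regularity theorem then yields a unique modification of the solution that is a stochastic flow of $C^{m}(\TT^d;\TT^d)$-diffeomorphisms, with $C^{m,\gamma}$ spatial regularity. For $m=0$ one still obtains a flow of homeomorphisms by the same argument applied at the level of H\"{o}lder regularity. Volume preservation follows from $\vdiv\sigma_k^{(j)}=0$: applying It\^{o}'s formula to $\det\D\flow_t$ using the driftless It\^{o} form yields $\det\D\flow_t\equiv1$, so $\flow_t\in\diffvol{m}$ almost surely for every $t$.

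To upgrade the stochastic flow to a random dynamical system over the MDS $(\Omega,\mathcal{F}^{0},\PP,(\shift_t)_{t\in\RR})$ constructed in Section~\ref{sec:prelim}, use the uniqueness of solutions in combination with $\shift_t$-invariance of $\PP$ to verify the perfect cocycle identity
\begin{equation*}
\flow(s+t,\omega,x)=\flow(t,\shift_{s}\omega,\flow(s,\omega,x))\,,
\end{equation*}
after passing to a $\PP$-null-set-free version via a standard perfection procedure (see~\cite{arnold_98}, Section~2.3). Joint measurability in $(t,\omega,x)$ is inherited from continuity in $(t,x)$ and progressive measurability in $\omega$. Finally, the martingale property in $C^{m,\gamma}(\TT^d;\TT^d)$ is a direct consequence of the driftless It\^{o} form: for each $x$, $t\mapsto\flow(t,\omega,x)$ is a bounded $(\mathcal{F}_t)$-martingale, and differentiating the driftless equation in $x$ (using again that $a_k^{(j)}\perp k$ to kill the corrector at every derivative order) shows that $D^\alpha\flow_t$ is likewise driftless in It\^{o} form, so the full flow is an $(\mathcal{F}_t)$-martingale with values in any separable Banach space in which the $C^{m,\gamma}$-norm is finite.

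The main technical obstacle is verifying the regularity estimates needed for Kunita's theorem from the sole assumption $\colour\in h^{m+\delta}(\ZZ_0^d)$: one has to control H\"{o}lder seminorms of the local characteristic $a(x,y)$ and its derivatives up to order $m$, uniformly over $x,y\in\TT^d$, via Fourier-series estimates that exploit the radial symmetry of $\colour$ and the bound $\abs{a_k^{(j)}}\leq1$. The vanishing of the Stratonovich--It\^{o} corrector and the divergence-freeness are the crucial structural inputs that make everything else essentially a book-keeping exercise.
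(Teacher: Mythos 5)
Your proposal is correct and follows essentially the same route as the paper's proof: you invoke Kunita's flow theory (the paper cites Arnold's Theorem~2.3.26, which is itself a Kunita-type result cast in RDS language), you observe that the divergence-free, isotropic structure makes the Stratonovich--It\^{o} corrector vanish — which is precisely the paper's justification for needing less decay on~$\colour$ — and you derive volume preservation from $\vdiv(a_k^{(j)}e_k)=0$ via the Liouville exponential. Your sketch is somewhat more explicit than the paper's one-paragraph argument (in particular, the observation that the first-variation equation is also driftless, and the explicit cocycle-perfection step), but there is no methodological divergence and no gap.
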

\begin{proof}
	The generation of an RDS of the prescribed regularity follows by the same arguments as in the proof of~\cite[Thm.~2.3.26]{arnold_98}. In contrast to~\cite[Thm.~2.3.26]{arnold_98},
	we need to assume less decay of~$\colour$, since the Stratonovich correction of the SDE~\eqref{eq:stochastic_characteristics_SDE} vanishes.
		
	To prove that~$\flow$ is volume preserving, it suffices to show $\det\D\flow(t,\omega,x)=1$
	for every $t\geq0$, $\omega\in\Omega$ and $x\in\TT^{d}$. The dynamics of $\det\D\flow(t,\omega,x)$ is given by
	Liouville's equation,
	\begin{equation*}
		\begin{split}
			&\det\D\flow(t,\place,x)\\
			&=\exp\Bigl(-\sqrt{2}\str\sum_{k\in\mathbb{Z}_{0}^{d}}\colour_{k}\sum_{j=1}^{d-1}\int_{0}^{t}\trace(\D(a_{k}^{(j)}e_{k})(\flow(t,\place,x)))\circ\dd B^{(j)}(s,k)\Bigr)\,.
		\end{split}
	\end{equation*}
	Using that $k\cdot a_{k}^{(j)}=0$, it follows that $\trace(\D(a_{k}^{(j)}e_{k}))(x)=\vdiv(a_{k}^{(j)}e_{k})(x)=0$,
	which yields for every $t\geq0$, $\omega\in\Omega$ and $x\in\TT^{d}$, that $\det\D\flow(t,\omega,x)=1.$
\end{proof}
\subsection{The Wong--Zakai theorem for the stochastic characteristics}
In this subsection we present a Wong--Zakai approximation
for the stochastic characteristics constructed in Lemma~\ref{lem:RDS_for_stochastic_characteristics}.

For every $m\in\NN$, $j=1,\ldots,d-1$ and $k\in\ZZ_{0}^{d}$, let $(B_{m}^{(j)}(t,k))_{t\geq0}$
be the dyadic piecewise-linear approximation of the Brownian motion
$(B^{(j)}(\place,k))_{t\geq0}$ with breakpoints $D_{m}\defeq\{0\}\cup2^{-m}\NN$. Let $(\colour_{k})_{k\in\ZZ^{d}_{0}}$ be radially symmetric and $n\in\NN$. We then define the noise
\begin{equation*}
	\xi_{m,n}^{\colour}(t,x)\defeq\sum_{\substack{k\in\mathbb{Z}_{0}^{d}\\\abs{k}\leq n}}\sum_{j=1}^{d-1}\colour_{k}a_{k}^{(j)}e_{k}(x)\frac{\dd}{\dd t}B_{m}^{(j)}(t,k)\,.
\end{equation*}
Let $\str>0$ and~$\flow_{m,n}$ be the solution to the Young ODE,
\begin{equation*}
	\dd\flow_{m,n}(t,x)=-\sqrt{2}\str\sum_{\substack{k\in\mathbb{Z}_{0}^{d}\\\abs{k}\leq n}}\sum_{j=1}^{d-1}\colour_{k}a_{k}^{(j)}e_{k}(\flow_{m,n}(t,x))\,\dd B_{m}^{(j)}(t,k)\,,
\end{equation*}
with inital condition $\flow_{m,n}(0,x)=x$.

We can now present the Wong--Zakai theorem for the stochastic characteristics.
\begin{theorem}\label{thm:Wong_Zakai_via_rough_paths}
	Let $T>0$, $r\in\NN\cup\{0\}$, $\reg>r+2$, $\colour\in h^{\reg}(\ZZ_{0}^{d})$ be radially symmetric and $\str>0$.
	Then there exists some sequence $(m_{n})_{n\in\NN}$ such that $m_{n}\in\NN$ for every $n\in\NN$ and $m_{n}\to\infty$ as $n\to\infty$, such that 	
	\begin{equation*}
		\flow_{m_{n},n}\to\flow\qquad\text{in }C([0,T];C^{r}(\TT^{d};\TT^{d}))\,,\qquad\PP\text{-a.s.\ as }n\to\infty\,.
	\end{equation*}
\end{theorem}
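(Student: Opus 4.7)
The plan is to split the combined limit into two pieces via the intermediate object $\flow_{n}$, the solution to the SDE obtained from~\eqref{eq:stochastic_characteristics_SDE} by keeping only the modes $\abs{k}\leq n$ but still driven by the true Brownian motions (not their piecewise linearisations). I then combine (a) a Wong--Zakai type convergence $\flow_{m,n}\to\flow_{n}$ for each fixed $n$ with (b) a stability/truncation result $\flow_{n}\to\flow$, and finally (c) diagonally extract a subsequence $m_{n}$.

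For step (a), the truncated SDE is, after pairing up complex conjugate modes, a finite-dimensional Stratonovich SDE on $\TT^{d}$ driven by finitely many real Brownian motions with $C^{\infty}$, divergence-free, uniformly bounded vector fields $x\mapsto -\sqrt{2}\str\colour_{k}a_{k}^{(j)}e_{k}(x)$ and their derivatives of every order. Classical Wong--Zakai theorems for stochastic flows of diffeomorphisms, for instance in the form stated in~\cite{kunita_90} for dyadic piecewise linear drivers, or via the continuity of the Itô--Lyons map in rough path theory applied to Brownian rough paths, yield that for every fixed $n$,
\begin{equation*}
\flow_{m,n}\to\flow_{n}\qquad\text{in probability in }C([0,T];C^{r}(\TT^{d};\TT^{d}))\text{ as }m\to\infty.
\end{equation*}

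For step (b), I expand the SDE satisfied by $\flow_{n}-\flow$ and its spatial derivatives up to order $r$. Applying a Gronwall argument to pathwise energy estimates (or Itô's formula at the level of appropriate Sobolev norms, followed by Kolmogorov's continuity criterion to pass to $C^{r}$), and using that the remainder vector field is the high-frequency tail
\begin{equation*}
-\sqrt{2}\str\sum_{\abs{k}>n}\sum_{j=1}^{d-1}\colour_{k}a_{k}^{(j)}e_{k}(\place)\,\dd B^{(j)}(t,k),
\end{equation*}
whose Cameron--Martin-type norm is controlled by $\sum_{\abs{k}>n}\abs{k}^{2(r+2)}\abs{\colour_{k}}^{2}$, I obtain for suitable $p\geq 1$ a bound of the form
\begin{equation*}
\EE\bigl[\norm{\flow_{n}-\flow}_{C([0,T];C^{r}(\TT^{d};\TT^{d}))}^{p}\bigr]\lesssim_{p,T,\str}\Bigl(\sum_{\abs{k}>n}\abs{k}^{2(r+2)}\abs{\colour_{k}}^{2}\Bigr)^{p/2}.
\end{equation*}
The assumption $\reg>r+2$ forces this tail to be summable in $n$, so Borel--Cantelli gives $\flow_{n}\to\flow$ almost surely in $C([0,T];C^{r}(\TT^{d};\TT^{d}))$.

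For step (c), using the in-probability convergence of (a) I pick $m_{n}\in\NN$ with $m_{n}\to\infty$ such that
\begin{equation*}
\PP\bigl(\norm{\flow_{m_{n},n}-\flow_{n}}_{C([0,T];C^{r}(\TT^{d};\TT^{d}))}>2^{-n}\bigr)<2^{-n},
\end{equation*}
and another application of Borel--Cantelli yields $\flow_{m_{n},n}-\flow_{n}\to 0$ almost surely in the same topology. Combining with (b) gives the claim. The main obstacle is step (a): the topology of $C^{r}$-diffeomorphism flows is genuinely nontrivial and one must track the Jacobian and higher derivatives together with the positions, which is what forces the gap of two derivatives between $\reg$ and $r$ (one for the order of the flow derivative and one more to control the $\abs{k}$-growth of $\D e_{k}$ via the radial symmetry and Sobolev-type summation). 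Step (b) is essentially a routine SDE stability estimate given the smoothness of the truncated vector fields, once the correct norm on the noise coefficients is identified.
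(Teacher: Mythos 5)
Your proof takes a genuinely different route from the paper's. The paper discharges the entire statement by invoking the proof of~\cite[Thm.~2.1]{dereich_dimitroff_12}, which handles the simultaneous limit in~$m$ and~$n$ directly via a rough-path/Wong--Zakai argument for flows driven by infinitely many Brownian modes. You instead interpolate through the truncated-but-exactly-driven flow~$\flow_{n}$, splitting the problem into a classical finite-dimensional Wong--Zakai step (Kunita), an SDE stability estimate controlling the high-frequency tail, and a diagonal extraction. This is more elementary and self-contained, and the decomposition makes the role of the derivative gap $\reg>r+2$ more transparent: one loses~$r$ derivatives from the order of the flow, and the remaining margin is spent on the Kolmogorov/Sobolev step needed to upgrade the pathwise estimate to a $C([0,T];C^{r})$-norm bound with summable moments. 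The trade-off is that steps (a) and (b) are both nontrivial: (a) can be cited, but (b) genuinely requires you to carry out Itô/Gronwall estimates for $\flow_{n}-\flow$ and all its spatial derivatives up to order~$r$, together with a BDG-plus-Kolmogorov argument to pass to the supremum norm, none of which you prove.

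One concrete slip: you assert that $\reg>r+2$ makes the tail
\begin{equation*}
a_{n}\defeq\sum_{\abs{k}>n}\abs{k}^{2(r+2)}\abs{\colour_{k}}^{2}
\end{equation*}
summable in~$n$. It is not, in general; it only satisfies $a_{n}\lesssim n^{-2(\reg-r-2)}$, which tends to zero but need not be summable unless $\reg>r+5/2$. The Borel--Cantelli argument still goes through because you may choose the exponent~$p$ in the moment bound large enough that $a_{n}^{p/2}\lesssim n^{-p(\reg-r-2)}$ is summable, i.e.\ $p>(\reg-r-2)^{-1}$. Since you already write ``for suitable $p\geq1$'' this is easily repaired, but as phrased the summability claim about $a_{n}$ itself is false and should be restated for $a_{n}^{p/2}$.
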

\begin{proof}
	The claim follows by the proof of~\cite[Thm.~2.1]{dereich_dimitroff_12}.
\end{proof}
From now on we denote for every $n\in\NN$,
\begin{equation}\label{eq:Wong_Zakai_approximations}
	\xi_{n}^{\colour}\defeq\xi_{m_{n},n}^{\colour}\quad\text{and}\quad\flow_{n}\defeq\flow_{m_{n},n}\,.
\end{equation}
\subsection{The support theorem for the stochastic characteristics}
In this subsection we present a support theorem
for the stochastic characteristics constructed in Lemma~\ref{lem:RDS_for_stochastic_characteristics}.
\begin{theorem}\label{thm:support_theorem_flow}
	Let $r\in\NN\cup\{0\}$, $\reg>r+2$, $\colour\in h^{\reg}(\ZZ_{0}^{d})$ be radially symmetric and $\str>0$.
	Then, it holds that,
	\begin{equation*}
		\supp(\flow)=\cl\{\flow_{h}:h\in\cm(\colour)\}\,,
	\end{equation*}
	where $\cl$ is taken in $C([0,T];C^{r}(\mathbb{T}^{d};\mathbb{T}^{d}))$,
	$\cm(\colour)$ denotes the Cameron--Martin space defined by~\eqref{eq:Cameron--Martin_space}
	and for every $h\in\cm(\colour)$, $(t,x)\mapsto\flow_{h}(t,x)$ denotes the solution to the ODE
	\begin{equation}\label{eq:flow_skeleton}
		\dot{x}_{t}=-\sqrt{2}\str h_{t}(x_{t})\,,\qquad x_{0}=x\,.
	\end{equation}
\end{theorem}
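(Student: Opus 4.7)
The plan is to prove the two inclusions
$\supp(\flow)\subseteq\cl\{\flow_{h}:h\in\cm(\colour)\}$ and $\cl\{\flow_{h}:h\in\cm(\colour)\}\subseteq\supp(\flow)$
separately: the first will follow directly from the Wong--Zakai theorem (Theorem~\ref{thm:Wong_Zakai_via_rough_paths}), while the second will combine Wong--Zakai with a Cameron--Martin/Girsanov shift, in the spirit of the classical Stroock--Varadhan support theorem for SDEs.

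For the first inclusion, I would observe that for every $n\in\NN$ and $\omega\in\Omega$ the dyadic piecewise-linear path $t\mapsto B_{m_{n}}^{(j)}(t,k;\omega)$ is absolutely continuous with $L^{2}$-in-time derivative, so the random driving vector field
\begin{equation*}
    h_{n}(t,x;\omega)\defeq\sum_{\abs{k}\leq n}\sum_{j=1}^{d-1}\colour_{k}\,a_{k}^{(j)}\,e_{k}(x)\,\frac{\dd}{\dd t}B_{m_{n}}^{(j)}(t,k;\omega)
\end{equation*}
lies in $\cm(\colour)$ pathwise, and comparing the defining ODEs gives $\flow_{n}(\omega)=\flow_{h_{n}(\omega)}$. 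Since Theorem~\ref{thm:Wong_Zakai_via_rough_paths} yields $\flow_{n}\to\flow$ in $C([0,T];C^{r}(\TT^{d};\TT^{d}))$ $\PP$-a.s., the trajectory $\flow(\omega)$ belongs to $\cl\{\flow_{h}:h\in\cm(\colour)\}$ on a set of full $\PP$-measure. Because the support of a Borel probability measure on a Polish space is the smallest closed set of full measure, the first inclusion follows.

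For the converse, since $\supp(\flow)$ is closed and $h\mapsto\flow_{h}$ is continuous from $\cm(\colour)$ into $C([0,T];C^{r}(\TT^{d};\TT^{d}))$ (by a Grönwall estimate for~\eqref{eq:flow_skeleton}), it suffices to place $\flow_{h}$ in $\supp(\flow)$ for $h$ from a dense subclass of $\cm(\colour)$, e.g.\ elements $h=\sum_{\abs{k}\leq n,\,j}\colour_{k}a_{k}^{(j)}e_{k}\,g^{(j)}(\place,k)$ with finitely many active modes and $g^{(j)}(\place,k)\in L^{2}([0,T])$. For such $h$, I introduce the Cameron--Martin shift $\tilde B^{(j)}(t,k)\defeq B^{(j)}(t,k)-\int_{0}^{t}g^{(j)}(s,k)\,\dd s$ and the corresponding Girsanov density, producing a measure $\tilde\PP\sim\PP$ under which $\tilde B$ has the same law as $B$ under $\PP$. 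Rewriting~\eqref{eq:stochastic_characteristics_SDE} in terms of $\tilde B$, the flow $\flow$ satisfies, under $\tilde\PP$, an SDE driven by $\tilde B$ with an additional deterministic drift $-\sqrt{2}\,\str\,h_{t}(X_{t})\,\dd t$. Given any open neighbourhood $U$ of $\flow_{h}$ in $C([0,T];C^{r}(\TT^{d};\TT^{d}))$, the continuity of the rough-path solution map established in~\cite[Thm.~2.1]{dereich_dimitroff_12} shows that $\flow\in U$ whenever the rough-path lift of $\tilde B$ is small enough; since cylindrical Brownian motion has full topological support in its natural rough-path space, this gives $\tilde\PP(\flow\in U)>0$, and equivalence with $\PP$ then yields $\PP(\flow\in U)>0$, as required.

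The main obstacle lies in the converse inclusion: one has to quantify how $\flow$ depends on its rough-path driver so that small rough-path fluctuations of $\tilde B$ translate into proximity to $\flow_{h}$ in $C([0,T];C^{r}(\TT^{d};\TT^{d}))$. Under the regularity assumption $\reg>r+2$ on $\colour$, this is precisely the content of the rough-path Wong--Zakai analysis of~\cite{dereich_dimitroff_12}, which we invoke as a black box.
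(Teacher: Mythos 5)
Your proof is correct, but it is worth noting that the paper's own ``proof'' is a one-line citation: the claim is attributed directly to~\cite[Thm.~2.1]{dereich_dimitroff_12}, whose statement is precisely such a support theorem for Kunita-type stochastic flows. What you have done is effectively unroll the proof of that reference: the inclusion $\supp(\flow)\subseteq\cl\{\flow_{h}:h\in\cm(\colour)\}$ via Wong--Zakai, and the reverse inclusion via a Cameron--Martin shift together with continuity of the rough-path solution map. Since the paper's Theorem~\ref{thm:Wong_Zakai_via_rough_paths} is itself obtained from ``the proof of''~\cite[Thm.~2.1]{dereich_dimitroff_12}, both your ingredients trace back to the same source, so there is no circularity; but for the converse inclusion you should more precisely invoke the \emph{proof} of that theorem (the Lipschitz/continuity estimate for the Itô--Lyons map in the relevant rough-path and flow topologies), rather than the theorem statement itself, since the statement is the thing being proved.

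Two minor points to tighten. First, in the forward inclusion you need $h_{n}(\cdot\,,\cdot\,;\omega)\in\cm(\colour)$; this is fine because the frequency cutoff $\abs{k}\leq n$ makes the sum finite, each $\tfrac{\dd}{\dd t}B_{m_{n}}^{(j)}(\cdot,k;\omega)$ is piecewise constant hence in $L^{2}([0,T])$, and the prefactor $\colour_{k}$ ensures the spatial part lies in the range of $\colour(D)\Pi$ even where $\colour_{k}=0$ -- worth stating explicitly. Second, your appeal to ``cylindrical Brownian motion has full topological support in its natural rough-path space'' is the one place where a genuinely nontrivial infinite-dimensional fact is used, and it is exactly the technical core that~\cite{dereich_dimitroff_12} supply; flagging that this is where the cited work does the heavy lifting, rather than asserting it as folklore, would make the dependency transparent. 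With those adjustments the argument is a faithful, more explicit rendition of the reference the paper invokes wholesale.
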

\begin{proof}
	The claim follows by~\cite[Thm.~2.1]{dereich_dimitroff_12}.
\end{proof}
\section{Transport equations}\label{app:transport_equations}
Let $\str>0$, $u_{0}\in L^{2}_{0}(\TT^{d})$ and $u$ be a solution to the inviscid stochastic transport equation,
\begin{equation}\label{eq:stochastic_transport}
	\partial_{t}u=\sqrt{2}\str\nabla\cdot(u\circ\xi^{\colour})\,,\qquad u\tzero=u_{0}\,,
\end{equation}
which is pathwise unique if we assume that~$\colour$ is an element of $\ell^{2}(\ZZ^{d}_{0})$ and radially symmetric, see~\cite[Thm.~3]{galeati_20}. If further $\colour\in h^{\reg}(\ZZ^{d}_{0})$ for some $\reg>1$, then the solution can be represented
via the stochastic characteristics~$\flow$ constructed in Lemma~\ref{lem:RDS_for_stochastic_characteristics}, as
\begin{equation*}
	u(t,x;\omega,u_{0})=u_{0}(\flow^{-1}(t,\omega,x))\,,\qquad t\geq0\,,\quad x\in\TT^{d}\,,\quad\omega\in\Omega\,.
\end{equation*}
\subsection{Almost-sure mixing estimate}
In this subsection we present an almost-sure mixing estimate
for solutions to the inviscid stochastic transport equation~\eqref{eq:stochastic_transport}. Recall the defintion of the constant $C_{d}$ given in~\eqref{eq:dimension_constant}.
\begin{theorem}\label{thm:mixing_almost_surely_energy_spectrum_inviscid_transport}
	Let $\colour\in h^{1}(\ZZ_{0}^{d})$ be non-trivial and radially symmetric, and $\str>0$. Then for all $\gamma>0$ such that
	\begin{equation*}
		0<\gamma<\norm{\colour}_{h^{-1}(\ZZ^{d}_{0})}^{2}C_{d}\str^{2}\,,
	\end{equation*}
	there exists a random constant $D=D(\omega)$ depending on $d$, $\gamma$, $\str$ and $\norm{\colour}_{h^{1}(\ZZ^{d}_{0})}$, such that for all $u_{0}\in L^{2}_{0}(\TT^{d})$, $\PP$-a.s.\ in $\omega\in\Omega$,
	\begin{equation*}
		\lVert u(t,\place;\omega,u_{0})\rVert_{\dot{H}^{-1}(\TT^{d})}^{2}\leq D(\omega)\euler^{-2(2\uppi)^{2}\gamma t}\lVert u_{0}\rVert_{L^{2}(\TT^{d})}^{2}\,,\qquad\text{for all }t\geq0\,.
	\end{equation*}
\end{theorem}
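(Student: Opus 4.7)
The strategy mirrors the proof of Theorem~\ref{thm:L2_decay_almost_surely_energy_spectrum}: first establish a decay bound in expectation, then upgrade to the pathwise almost-sure statement via a Borel--Cantelli argument. The essential differences compared to that theorem are that pure transport preserves all $L^{p}(\TT^{d})$-norms, so the decay must be quantified in the weaker norm $\dot{H}^{-1}(\TT^{d})$, and that there is no diffusion contribution $(1-\eta)$ to add to the dissipation budget.

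First, I would apply It\^o's formula to $t\mapsto\lVert u_{t}\rVert_{\dot{H}^{-1}(\TT^{d})}^{2}=\lVert(-\Delta)^{-1/2}u_{t}\rVert_{L^{2}(\TT^{d})}^{2}$. Since $\xi^{\colour}$ is divergence-free, the Stratonovich-to-It\^o correction produces a Kraichnan-type second-order elliptic operator which, when paired with the $\dot{H}^{-1}$-inner product, yields genuine dissipation despite the absence of viscosity. Following the Fourier-mode spectral analysis of~\cite[Thm.~1.3]{luo_tang_zhao_24} applied in the degenerate regime $\diffu=0$ and $W=0$, but testing against $(-\Delta)^{-1}u_{t}$ rather than $u_{t}$, I expect to obtain an expected-value estimate of the form
\begin{equation*}
	\EE\bigl[\lVert u(t,\place;\place,u_{0})\rVert_{\dot{H}^{-1}(\TT^{d})}^{2}\bigr]\leq C\exp\bigl(-2(2\uppi)^{2}\norm{\colour}_{h^{-1}(\ZZ^{d}_{0})}^{2}C_{d}\str^{2}t\bigr)\lVert u_{0}\rVert_{L^{2}(\TT^{d})}^{2}\,,
\end{equation*}
for some $C=C(d,\str,\norm{\colour}_{h^{1}(\ZZ^{d}_{0})})>0$. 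Note that in contrast to Theorem~\ref{thm:L2_decay_almost_surely_energy_spectrum} there is no diffusive contribution $(1-\eta)$ and no growth rate $C_{W}^{(\eta)}$ to compensate, so the dissipation rate is exactly $\norm{\colour}_{h^{-1}(\ZZ^{d}_{0})}^{2}C_{d}\str^{2}$.

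Second, for any $\gamma$ with $0<\gamma<\norm{\colour}_{h^{-1}(\ZZ^{d}_{0})}^{2}C_{d}\str^{2}$, choose a buffer $\gamma<\gamma'<\norm{\colour}_{h^{-1}(\ZZ^{d}_{0})}^{2}C_{d}\str^{2}$ for which the above bound holds at rate $\gamma'$. Markov's inequality at integer times $n\in\NN$ gives
\begin{equation*}
	\PP\bigl(\lVert u_{n}\rVert_{\dot{H}^{-1}(\TT^{d})}^{2}>\euler^{-2(2\uppi)^{2}\gamma n}\lVert u_{0}\rVert_{L^{2}(\TT^{d})}^{2}\bigr)\leq C\euler^{-2(2\uppi)^{2}(\gamma'-\gamma)n}\,,
\end{equation*}
which is summable in $n$. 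Borel--Cantelli then yields a $\PP$-a.s.\ random $N(\omega)\in\NN$ such that the decay bound holds along all integers $n\geq N(\omega)$, with a random multiplicative constant. To pass to continuous time $t\geq0$, I would exploit the pathwise conservation $\lVert u_{t}\rVert_{L^{2}(\TT^{d})}=\lVert u_{0}\rVert_{L^{2}(\TT^{d})}$ (and hence $\lVert u_{t}\rVert_{\dot{H}^{-1}(\TT^{d})}\leq(2\uppi)^{-1}\lVert u_{0}\rVert_{L^{2}(\TT^{d})}$) to interpolate between successive integer times, absorbing the resulting prefactor into a random constant $D(\omega)$.

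\textbf{Main obstacle.} The analytical core is the expected-value estimate with the sharp dissipation rate $\norm{\colour}_{h^{-1}(\ZZ^{d}_{0})}^{2}C_{d}\str^{2}$; this requires carefully tracking the Fourier-mode cancellations in the Kraichnan correction operator paired against the $\dot{H}^{-1}$-inner product and identifying the explicit constant $C_{d}$, and is the technical heart of~\cite{luo_tang_zhao_24}. The Borel--Cantelli upgrade and the time-continuity step are routine by comparison.
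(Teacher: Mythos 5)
The paper dispatches this result in one line by citing~\cite[Thm.~1.2]{luo_tang_zhao_24} after a rescaling; your proposal is effectively a reconstruction of what that cited proof must accomplish, built by analogy with Theorem~\ref{thm:L2_decay_almost_surely_energy_spectrum}. The overall strategy --- It\^o's formula applied to $t\mapsto\norm{u_t}^2_{\dot H^{-1}(\TT^d)}$, a Fourier-mode analysis of the Kraichnan correction paired against $(-\Delta)^{-1}$, and a Borel--Cantelli upgrade from the moment estimate to an almost-sure statement --- is indeed the route taken in the cited reference, and your observation that pure transport contributes no diffusive term $(1-\eta)$ and no reaction term $C_W^{(\eta)}$ is correct.

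The one genuine gap is the passage from integer to continuous time. You propose to fill in the intervals $[n,n+1)$ using the pathwise Poincar\'e bound $\norm{u_t}_{\dot H^{-1}(\TT^d)}\le(2\uppi)^{-1}\norm{u_0}_{L^2(\TT^d)}$. But this bound is constant in~$t$, so it cannot be ``absorbed'' into a fixed random constant $D(\omega)$: at large times it would force $D(\omega)\ge(2\uppi)^{-2}\euler^{2(2\uppi)^2\gamma t}$, which diverges. Borel--Cantelli only controls the solution along the integers, and on $[n,n+1)$ the $\dot H^{-1}$-norm may rise above its value at~$n$, since mixing is not pathwise monotone. To close this, you need one of two standard devices. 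Either strengthen the moment estimate to a $\sup_{s\in[n,n+1]}$ inside the expectation, via a maximal inequality (Doob/BDG) applied to the martingale part of the It\^o decomposition of $\norm{u_s}^2_{\dot H^{-1}(\TT^d)}$, so that Borel--Cantelli directly delivers a continuous-time statement. Or use the Lagrangian representation $u_t=u_n\circ\flow_{[n,t]}^{-1}$ together with the duality estimate, valid for volume-preserving diffeomorphisms,
\begin{equation*}
	\norm{v\circ\flow^{-1}}_{\dot H^{-1}(\TT^d)}\le\Bigl(\sup_{x\in\TT^d}\abs{\D\flow(x)}\Bigr)\norm{v}_{\dot H^{-1}(\TT^d)}\,,
\end{equation*}
and then control $\sup_{t\in[n,n+1]}\sup_{x}\abs{\D\flow_{[n,t]}(x)}$ by a second Borel--Cantelli argument exploiting the finite exponential moments of the flow's Lipschitz constant over unit time intervals. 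Either route supplies the missing subexponential correction without degrading the rate~$\gamma$.
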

\begin{proof}
	After rescaling, the result is given by~\cite[Thm.~1.2]{luo_tang_zhao_24}.
\end{proof}
\subsection{The Wong--Zakai theorem for stochastic transport equations}
In this subsection we present a Wong--Zakai type approximation result
for solutions to the inviscid stochastic transport equation~\eqref{eq:stochastic_transport}.
\begin{theorem}\label{thm:Wong_Zakai_stochastic_transport_equation}
	Let $T>0$, $\reg>3$, $\colour\in h^{\reg}(\ZZ^{d}_{0})$ be radially symmetric, $\str>0$ and $A\subset\dot{H}^{1}(\TT^{d})$ be bounded. For every $u_{0}\in A$ and $n\in\NN$, let $(\omega,t,x)\mapsto u_{n}(t,x;\omega,u_{0})$ be the solution to the associated transport equation
	\begin{equation*}
		\partial_{t}u_{n}=\sqrt{2}\str\nabla\cdot(u_{n}\xi_{n}^{\colour})\,,\qquad u_{n}\tzero=u_{0}\,,
	\end{equation*}
	where the noise $\xi_{n}^{\colour}$ is given by~\eqref{eq:Wong_Zakai_approximations}.
	Then it follows that $\PP$-a.s.\ in $\omega\in\Omega$,
	\begin{equation}\label{eq:Wong_Zakai_application}
		\lim_{n\to\infty}\sup_{u_{0}\in A}\norm{u(\place,\place;\omega,u_{0})-u_{n}(\place,\place;\omega,u_{0})}_{C([0,T];L^{2}(\TT^{d}))}=0\,,
	\end{equation}
	where $(\omega,t,x)\mapsto u(t,x;\omega,u_{0})$ is the solution to~\eqref{eq:stochastic_transport}.
\end{theorem}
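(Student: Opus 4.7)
The plan is to reduce everything to the characteristic representation
\begin{equation*}
u(t,x;\omega,u_{0})=u_{0}(\flow^{-1}(t,\omega,x))\,,\qquad u_{n}(t,x;\omega,u_{0})=u_{0}(\flow_{n}^{-1}(t,\omega,x))\,,
\end{equation*}
and to combine the almost-sure Wong--Zakai convergence of the stochastic characteristics with a finite $\eps$-net argument made possible by the $\dot{H}^{1}$-bound on~$A$. Since $\xi_{n}^{\colour}$ inherits the divergence-free property from $k\cdot a_{k}^{(j)}=0$, both $\flow$ and $\flow_{n}$ are volume-preserving, hence
\begin{equation*}
\norm{w\circ\flow^{-1}(t,\omega,\place)}_{L^{2}(\TT^{d})}=\norm{w\circ\flow_{n}^{-1}(t,\omega,\place)}_{L^{2}(\TT^{d})}=\norm{w}_{L^{2}(\TT^{d})}
\end{equation*}
for every $w\in L^{2}(\TT^{d})$, $t\in[0,T]$ and $\omega\in\Omega$.

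First, I would apply Theorem~\ref{thm:Wong_Zakai_via_rough_paths} with $r=1$, which is permitted by the hypothesis $\reg>3$, to obtain $\flow_{n}\to\flow$ in $C([0,T];C^{1}(\TT^{d};\TT^{d}))$ $\PP$-a.s. Combining this with the continuity of the inversion map (Appendix~\ref{app:continuity_inversion_map}), it follows that $\flow_{n}^{-1}\to\flow^{-1}$ in $C([0,T];C^{0}(\TT^{d};\TT^{d}))$ $\PP$-a.s.\ as~$n\to\infty$.

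Second, I would construct a finite smooth approximating family for $A$. Since $A\subset\dot{H}^{1}(\TT^{d})$ is bounded and the embedding $\dot{H}^{1}(\TT^{d})\embed L^{2}(\TT^{d})$ is compact, $A$ is precompact in $L^{2}(\TT^{d})$. For each $\eps>0$ this allows me to choose a finite collection $\{v_{1},\ldots,v_{N_{\eps}}\}\subset C^{\infty}(\TT^{d})$ such that every $u_{0}\in A$ admits some $i=i(u_{0})$ with $\norm{u_{0}-v_{i}}_{L^{2}(\TT^{d})}<\eps$ (obtained by first taking a finite $\eps/2$-net in $L^{2}$ and then mollifying). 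Using volume-preservation together with the pointwise Lipschitz bound $\abs{v_{i}\circ\flow^{-1}-v_{i}\circ\flow_{n}^{-1}}\leq\norm{v_{i}}_{C^{1}(\TT^{d})}\abs{\flow^{-1}-\flow_{n}^{-1}}$ and a triangle inequality, I arrive at
\begin{equation*}
\sup_{u_{0}\in A}\norm{u(\place,\place;\omega,u_{0})-u_{n}(\place,\place;\omega,u_{0})}_{C([0,T];L^{2}(\TT^{d}))}\leq 2\eps+M_{\eps}\norm{\flow^{-1}-\flow_{n}^{-1}}_{C([0,T];C^{0}(\TT^{d};\TT^{d}))}\,,
\end{equation*}
where $M_{\eps}\defeq\max_{i\leq N_{\eps}}\norm{v_{i}}_{C^{1}(\TT^{d})}$.

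Finally, I would send $n\to\infty$ along the $\PP$-a.s.\ set where $\flow_{n}^{-1}\to\flow^{-1}$, obtaining $\limsup_{n\to\infty}\sup_{u_{0}\in A}\norm{u-u_{n}}_{C([0,T];L^{2}(\TT^{d}))}\leq 2\eps$ almost surely, and then let $\eps\to 0$ to conclude~\eqref{eq:Wong_Zakai_application}. The main subtlety, and really the only one, lies in the order of limits: the constant $M_{\eps}$ will typically blow up as the net becomes finer, so one cannot send $n\to\infty$ and $\eps\to 0$ simultaneously. Freezing~$\eps$ first (so that $M_{\eps}$ is finite) and exploiting the almost-sure flow convergence to absorb the second term is the point that makes the uniformity over $A$ work.
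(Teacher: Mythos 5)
Your proposal is correct, and its skeleton is the same as the paper's: represent both $u$ and $u_{n}$ along characteristics, invoke Theorem~\ref{thm:Wong_Zakai_via_rough_paths} with $r=1$ (legitimate since $\reg>3$) to get $\flow_{n}\to\flow$ in $C([0,T];C^{1}(\TT^{d};\TT^{d}))$ $\PP$-a.s., and use that $A$ is bounded in $\dot{H}^{1}(\TT^{d})$, hence relatively compact in $L^{2}(\TT^{d})$, to upgrade to uniformity over $u_{0}\in A$. The only divergence is in how this last upgrade is carried out: the paper simply cites Lemma~\ref{lem:uniform_continuity_L2_concatenation_diffeomorphism_flow_inverse}, whose proof runs through the uniform boundedness of the composition operators $F_{1/n}u_{0}=u_{0}\circ\flow_{n}^{-1}$ (via the Jacobian-determinant bound coming from $C^{1}$-convergence of the flows) together with convergence on the dense set $C^{\infty}(\TT^{d})$ and the Engel--Nagel equicontinuity lemma; you instead re-prove this content by hand with a finite mollified $\eps$-net, using volume preservation of both $\flow$ and $\flow_{n}$ so that composition is an exact $L^{2}$-isometry, which trivialises the operator-norm bound, and Lemma~\ref{lem:inversion_map_continuity_time_dependent} to pass to $\flow_{n}^{-1}\to\flow^{-1}$ in the sup norm. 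Both routes are valid; yours is more self-contained but slightly less general (it leans on incompressibility of $\flow_{n}$, which does hold here since the approximating fields are divergence-free and Liouville's formula applies, whereas the paper's lemma covers arbitrary $C^{1}$-convergent diffeomorphism flows), and your fixed-$\eps$-then-$n\to\infty$ order of limits is exactly the right way to handle the blow-up of $M_{\eps}$. Only a cosmetic remark: the pointwise bound $\abs{v_{i}\circ\flow^{-1}-v_{i}\circ\flow_{n}^{-1}}\leq\norm{v_{i}}_{C^{1}}\abs{\flow^{-1}-\flow_{n}^{-1}}$ should be read with the geodesic distance on $\TT^{d}$ in the last factor.
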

\begin{proof}
	For every $n\in\NN$ define the stochastic characteristics~$\flow_{n}$ as in~\eqref{eq:Wong_Zakai_approximations} and note that $u_{n}(t,x;\omega,u_{0})=u_{0}(\flow_{n}^{-1}(t,\omega,x))$.
	It follows by the Wong--Zakai theorem (see Theorem~\ref{thm:Wong_Zakai_via_rough_paths}), that $\PP$-a.s.\ in $\omega\in\Omega$,
	\begin{equation*}
		\lim_{n\to\infty}\norm{\flow(\omega)-\flow_{n}(\omega)}_{C([0,T];C^{1}(\TT^{d};\TT^{d}))}=0\,.
	\end{equation*}
	Using that $A$ is bounded in $\dot{H}^{1}(\TT^{d})$, hence relatively compact in $L^{2}(\TT^{d})$, the claim follows by Lemma~\ref{lem:uniform_continuity_L2_concatenation_diffeomorphism_flow_inverse}.
\end{proof}
\subsection{Existence of mixers in the Cameron--Martin space}
In this subsection we  combine the almost-sure mixing estimate of Theorem~\ref{thm:mixing_almost_surely_energy_spectrum_inviscid_transport} and the Wong--Zakai approximation of Theorem~\ref{thm:Wong_Zakai_stochastic_transport_equation} to show that there exist mixing velocity fields in the Cameron--Martin space~\eqref{eq:Cameron--Martin_space}.
\begin{lemma}\label{lem:existence_Cameron_Martin_mixer}
	Let $\reg>3$, $\colour\in h^{\reg}(\ZZ^{d}_{0})$ be non-trivial and radially symmetric, $\str>0$,
	$\delta>0$ and $A\subset\dot{H}^{1}(\TT^{d})$ be bounded.
	Then there exists some
	$t_{1}=t_{1}(\delta,A,\str,\colour)\in[0,\infty)$ and $v=v(\delta,A,\str,\colour)\in\cm(\colour)$ such
	that for any initial data $u_{0}\in A$ and any solution $(t,x)\mapsto u_{v}(t,x;u_{0})$
	to 
	\begin{equation*}
		\partial_{t}u=\sqrt{2}\str\nabla\cdot(uv)\,,\qquad u\tzero=u_{0}\,,
	\end{equation*}
	it holds that uniformly in $u_{0}\in A$,
	\begin{equation*}
		\lVert u_{v}(t_{1},\place;u_{0})\rVert_{\dot{H}^{-1}(\T^{d})}\leq\delta\,.
	\end{equation*}
\end{lemma}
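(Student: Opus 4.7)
The plan is to combine the almost-sure mixing estimate of Theorem~\ref{thm:mixing_almost_surely_energy_spectrum_inviscid_transport} with the Wong--Zakai approximation of Theorem~\ref{thm:Wong_Zakai_stochastic_transport_equation} to extract a deterministic mixing velocity field $v \in \cm(\colour)$. The key observation is that the piecewise-linear-in-time approximants $\xi_n^{\colour}(\omega)$ appearing in the Wong--Zakai theorem are, for each fixed realisation~$\omega$, precisely elements of the Cameron--Martin space~$\cm(\colour)$: indeed, $\partial_t B_{m_n}^{(j)}(\place, k)(\omega)$ is a piecewise-constant (hence $L^\infty$, hence $L^2$) function of time on any bounded interval, and the spatial finite sum $\sum_{\abs{k}\leq n}\sum_{j}\colour_{k}a_{k}^{(j)}e_{k}$ lies in $\colour(D)\Pi L^{2}(\TT^{d};\RR^{d})$ by construction of~$\colour(D)\Pi$ and by $a_k^{(j)} \in k^\perp$.

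First I would fix $M \defeq \sup_{u_0 \in A}\|u_0\|_{L^2(\TT^d)} < \infty$, which is finite since $A$ is bounded in $\dot{H}^{1}(\TT^d)$. Pick $\gamma>0$ satisfying the hypothesis of Theorem~\ref{thm:mixing_almost_surely_energy_spectrum_inviscid_transport}. That theorem provides a random constant $D(\omega)$, independent of $u_0$, so that $\PP$-a.s.
\begin{equation*}
	\sup_{u_0 \in A}\lVert u(t,\place;\omega,u_{0})\rVert_{\dot{H}^{-1}(\TT^{d})}^{2}\leq D(\omega)M^{2}\euler^{-2(2\uppi)^{2}\gamma t}\,,\qquad t\geq0\,.
\end{equation*}
Since the right-hand side tends to zero as $t\to\infty$, the event $E_{t}\defeq\{\sup_{u_0 \in A}\lVert u(t,\place;\place,u_{0})\rVert_{\dot{H}^{-1}(\TT^{d})}\leq\delta/2\}$ satisfies $\PP(E_{t})\to1$ as $t\to\infty$, so I can choose a deterministic $t_{1}=t_{1}(\delta,A,\str,\colour)\in[0,\infty)$ with $\PP(E_{t_{1}})>1/2$.

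Next, I apply Theorem~\ref{thm:Wong_Zakai_stochastic_transport_equation} on $[0,t_{1}]$: $\PP$-a.s., the approximations $u_{n}(\place,\place;\omega,u_{0})$ driven by $\xi_{n}^{\colour}$ converge to $u(\place,\place;\omega,u_{0})$ in $C([0,t_{1}];L^{2}(\TT^{d}))$ uniformly in $u_{0}\in A$. Since the mean-zero embedding gives $\lVert f\rVert_{\dot{H}^{-1}(\TT^{d})}\leq C\lVert f\rVert_{L^{2}(\TT^{d})}$, this is in particular uniform convergence in the $\dot{H}^{-1}$-norm at time $t_{1}$. Call the full-measure event on which this convergence holds $E_{\textnormal{WZ}}$. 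Then $\PP(E_{t_{1}}\cap E_{\textnormal{WZ}})>1/2>0$, so I can fix some $\omega^{*}\in E_{t_{1}}\cap E_{\textnormal{WZ}}$ and, using that convergence, an integer $n^{*}=n^{*}(\omega^{*})\in\NN$ such that
\begin{equation*}
	\sup_{u_{0}\in A}\lVert u_{n^{*}}(t_{1},\place;\omega^{*},u_{0})-u(t_{1},\place;\omega^{*},u_{0})\rVert_{\dot{H}^{-1}(\TT^{d})}\leq\delta/2\,.
\end{equation*}

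Finally, I set $v\defeq\xi_{n^{*}}^{\colour}(\omega^{*},\place,\place)$. By the discussion above $v\in\cm(\colour)$, and $v$ is deterministic since $\omega^{*}$ and $n^{*}$ have been fixed once and for all; it depends only on $(\delta,A,\str,\colour)$ through the choices of $t_{1}$, $E_{t_{1}}\cap E_{\textnormal{WZ}}$, and the rate of convergence in Wong--Zakai. The function $u_{n^{*}}(\place,\place;\omega^{*},u_{0})$ coincides with $u_{v}(\place,\place;u_{0})$ by uniqueness of solutions to the transport equation driven by $v$, so combining the two $\delta/2$-estimates by the triangle inequality yields $\lVert u_{v}(t_{1},\place;u_{0})\rVert_{\dot{H}^{-1}(\TT^{d})}\leq\delta$ uniformly in $u_{0}\in A$. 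The one subtlety that requires mild care is confirming that the uniform-in-$u_{0}$ character is preserved through every step; this is built into both Theorem~\ref{thm:mixing_almost_surely_energy_spectrum_inviscid_transport} (via the $u_0$-independence of $D(\omega)$) and Theorem~\ref{thm:Wong_Zakai_stochastic_transport_equation} (via the explicit $\sup_{u_{0}\in A}$ in its statement), so no additional work is required.
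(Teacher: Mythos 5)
Your proof is correct and takes essentially the same approach as the paper: combine the almost-sure mixing estimate of Theorem~\ref{thm:mixing_almost_surely_energy_spectrum_inviscid_transport} with the Wong--Zakai approximation of Theorem~\ref{thm:Wong_Zakai_stochastic_transport_equation}, fix a good realization $\omega^{*}$ once and for all, and read off the deterministic velocity field $v=\xi_{n^{*}}^{\colour}(\omega^{*})\in\cm(\colour)$ from the finite-mode, piecewise-linear-in-time approximants. The only structural difference is that you select a deterministic $t_{1}$ first by arguing $\PP(E_{t_{1}})>1/2$ and then pick $\omega^{*}\in E_{t_{1}}\cap E_{\textnormal{WZ}}$, whereas the paper fixes $\omega_{*}$ outside the union of the two null sets and only then chooses $t_{1}=t_{1}(\gamma,\delta,A,\omega_{*})$; both routes give $t_{1}$ and $v$ depending only on $(\delta,A,\str,\colour)$, though yours implicitly requires that $E_{t}$ be measurable (which one can guarantee, e.g.\ by replacing $E_{t}$ with the majorizing event $\{D(\omega)M^{2}\euler^{-2(2\uppi)^{2}\gamma t}\leq(\delta/2)^{2}\}$).
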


\begin{proof}
	Let $\reg>3$, $\colour\in h^{\reg}(\ZZ^{d}_{0})$ be non-trivial and radially symmetric, $\str>0$, $u_{0}\in\dot{H}^{1}(\TT^{d})$ and $(\omega,t,x)\mapsto u(t,x;\omega,u_{0})$ be the solution to the transport SPDE~\eqref{eq:stochastic_transport}.
	Recall the definition of $C_{d}$ (see~\eqref{eq:dimension_constant}.)
	It then follows by Lemma~\ref{thm:mixing_almost_surely_energy_spectrum_inviscid_transport},
	that for all
	\begin{equation*}
		0<\gamma<\norm{\colour}_{h^{-1}(\ZZ^{d}_{0})}^{2}C_{d}\str^{2}\,,
	\end{equation*}
	there exists a random constant $D=D(\omega)$ independent of $u_{0}$
	(and depending on $d$, $\gamma$, $\str$ and $\norm{\colour}_{h^{1}(\ZZ^{d}_{0})}$) such that for
	all $t\in[0,\infty)$,
	\begin{equation}\label{eq:mixing_application}
		\lVert u(t,\place;\omega,u_{0})\rVert_{\dot{H}^{-1}(\TT^{d})}^{2}\leq D(\omega)\euler^{-2(2\uppi)^{2}\gamma t}\lVert u_{0}\rVert_{L^{2}(\TT^{d})}^{2}\,,\qquad\mathbb{P}\text{-a.s.}
	\end{equation}
	Denote by $N_{1}\subset\Omega$ the null set outside of which~\eqref{eq:mixing_application}
	holds true. Further, denote by $N_{2}^{(T)}\subset\Omega$ the null set outside of which the Wong--Zakai approximation~\eqref{eq:Wong_Zakai_application}
	holds true. Define
	\begin{equation*}
		N_{2}\defeq\bigcup_{T\in\NN}N_{2}^{(T)}
	\end{equation*}
	and note that~$N_{2}$ is again of measure~$0$ as a countable union
	of null sets.

	For every $n\in\NN$, let~$\xi_{n}^{\colour}$ be the Wong--Zakai noise given by~\eqref{eq:Wong_Zakai_approximations}. Fix a realization $\omega_{*}\in\Omega\setminus\{N_{1}\cup N_{2}\}$
	and define the deterministic velocity field
	\begin{equation*}
		v_{n}(t,x)\defeq\xi_{n}^{\colour}(t,x;\omega_{*})\,.
	\end{equation*}
	We obtain by the triangle inequality for every $n\in\NN$ and $t\in[0,\infty)$,
	\begin{equation}\label{eq:triangle_inequality_application}
		\begin{split}
			&\norm{u_{v_{n}}(t,\place;u_{0})}_{\dot{H}^{-1}(\TT^{d})}\\
			&\qquad\leq\norm{u(t,\place;\omega_{*},u_{0})}_{\dot{H}^{-1}(\TT^{d})}\\
			&\qquad\quad+\norm{u(\place,\place;\omega_{*},u_{0})-u_{v_{n}}(\place,\place;u_{0})}_{C([0,t];\dot{H}^{-1}(\TT^{d}))}\,.
		\end{split}
	\end{equation}
	By the choice of $\omega_{*}$ and~\eqref{eq:mixing_application},
	it follows that for all $t\in[0,\infty)$,
	\begin{equation*}
		\norm{u(t,\place;\omega_{*},u_{0})}_{\dot{H}^{-1}(\TT^{d})}^{2}\leq D(\omega_{*})\euler^{-2(2\uppi)^{2}\gamma t}\lVert u_{0}\rVert_{L^{2}(\TT^{d})}^{2}\,.
	\end{equation*}
	Using that $A\subset\dot{H}^{1}(\TT^{d})\embed L^{2}_{0}(\TT^{d})$ is bounded, we can choose $t_{1}=t_{1}(\gamma,\delta,A,\omega_{*})$
	such that uniformly over $u_{0}\in A$, 
	\begin{equation}\label{eq:mixing_realization}
		\norm{u(t_{1},\place;\omega_{*},u_{0})}_{\dot{H}^{-1}(\TT^{d})}\leq\delta/2\,.
	\end{equation}
	By the choice of~$\omega_{*}$ and Theorem~\ref{thm:Wong_Zakai_stochastic_transport_equation}
	there exists some $n_{*}=n_{*}(t_{1},\delta,A,\omega_{*})$ such that uniformly in $u_{0}\in A$,
	\begin{equation}\label{eq:Wong_Zakai_realization}
		\begin{split}
			&\norm{u(\place,\place;\omega_{*},u_{0})-u_{v_{n_{*}}}(\place,\place;u_{0})}_{C([0,t_{1}];\dot{H}^{-1}(\TT^{d}))}\\
			&\leq\norm{u(\place,\place;\omega_{*},u_{0})-u_{v_{n_{*}}}(\place,\place;u_{0})}_{C([0,t_{1}];L^{2}(\TT^{d}))}\\
			&\leq\delta/2\,.
		\end{split}
	\end{equation}
	Combining~\eqref{eq:triangle_inequality_application},~\eqref{eq:mixing_realization}~\&~\eqref{eq:Wong_Zakai_realization},
	we obtain
	\begin{equation*}
		\norm{u_{v_{n_{*}}}(t_{1},\place;u_{0})}_{\dot{H}^{-1}(\TT^{d})}\leq\delta\,.
	\end{equation*}
	
	Setting $v(t,x)\defeq v_{n_{*}}(t,x)$ for every $t\in[0,t_{1}]$ and
	$x\in\TT^{d}$, it remains to show that $v\in\cm(\colour)$.
	Using that~$v$ has only finitely many Fourier modes, it suffices to show that $(\frac{\dd}{\dd t}B_{m}^{(j)})(\place,k;\omega_{*})\in L^{2}([0,t_{1}])$
	for every $m\in\NN$, $j=1,\ldots,d-1$ and $k\in\ZZ_{0}^{d}$. Each $B_{m}^{(j)}(\place,k;\omega_{*})$ is piecewise linear, hence
	Lipschitz continuous, which yields the claim.
\end{proof}
\subsection{The approximation error between viscous and inviscid transport}
\begin{lemma}\label{lem:zeroth_order_bound}
	Let $A\subset H^{1}(\TT^{d})$ be bounded and $v\in L^1([0,\infty);C^{1}(\TT^{d};\RR^{d}))$ be incompressible.
	Denote for every $\scale\geq0$ and $\rho_{0}\in A$ by~$(t,x)\mapsto\rho^{(\scale)}_{v}(t,x;\rho_{0})$
	the solution to the PDE
	\begin{equation*}
		\partial_{t}\rho=\scale\bigl(\Delta\rho+\nabla\cdot(\rho(\nabla W\ast\rho))\bigr)+\nabla\cdot(\rho v)\,,\qquad\rho\tzero=\rho_{0}\,.
	\end{equation*}
	Then, there exists some time-dependent, positive function
	\begin{equation*}
		C_{t}=C_{t}(A,\norm{v}_{L^{1}([0,t];C^{1}(\TT^{d}))},\norm{W}_{C^{2}(\TT^{d})})
	\end{equation*}
	such that for every $t\geq0$ and $\alpha>0$ uniformly over $\rho_{0}\in A$,
	\begin{equation*}
		\norm{\rho^{(\scale)}_{v}(t,\place;\rho_{0})-\rho^{(0)}_{v}(t,\place;\rho_{0})}_{L^{2}(\TT^{d})}\leq\scale^{1/2}C_{t}\,.
	\end{equation*}
\end{lemma}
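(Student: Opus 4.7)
The plan is to introduce the difference $w(t,x)\defeq\rho_{v}^{(\scale)}(t,x;\rho_{0})-\rho_{v}^{(0)}(t,x;\rho_{0})$, which satisfies $w|_{t=0}=0$ and, by a direct subtraction of the two equations,
\begin{equation*}
	\partial_{t}w=\scale\Delta w+\scale\Delta\rho_{v}^{(0)}+\scale\vdiv\bigl(\rho_{v}^{(\scale)}(\nabla W\ast\rho_{v}^{(\scale)})\bigr)+\vdiv(wv)\,.
\end{equation*}
The key structural observation is that all three forcing contributions on the right-hand side carry an explicit factor of~$\scale$, while the remaining transport term $\vdiv(wv)$ is conservative in $L^{2}(\TT^{d})$ thanks to the incompressibility of~$v$. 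A single $L^{2}(\TT^{d})$-energy estimate should therefore already yield the claimed $\scale^{1/2}$-rate.

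Testing the equation against~$w$, the transport contribution vanishes identically and the inherited viscosity produces the dissipative term $-\scale\norm{\nabla w}_{L^{2}(\TT^{d})}^{2}$. After integrating by parts in the two remaining source terms, Cauchy--Schwarz and Young's inequality absorb their bad $\norm{\nabla w}_{L^{2}(\TT^{d})}$-contributions into the dissipation, leaving a differential inequality of the schematic form
\begin{equation*}
	\frac{\dd}{\dd t}\norm{w}_{L^{2}(\TT^{d})}^{2}\leq\scale\,\norm{\nabla\rho_{v}^{(0)}}_{L^{2}(\TT^{d})}^{2}+\scale\,\norm{\rho_{v}^{(\scale)}(\nabla W\ast\rho_{v}^{(\scale)})}_{L^{2}(\TT^{d})}^{2}\,.
\end{equation*}

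The remaining task is to control both source terms uniformly in $\rho_{0}\in A$ and in~$\scale$. For $\rho_{v}^{(0)}$, I would exploit the fact that it is governed by a pure transport equation along the $C^{1}$-flow generated by~$v$; the method of characteristics (or, equivalently, a differentiated $H^{1}$-energy estimate on the transport equation) gives $\norm{\rho_{v}^{(0)}(t)}_{H^{1}(\TT^{d})}\leq\norm{\rho_{0}}_{H^{1}(\TT^{d})}\exp(C\norm{v}_{L^{1}([0,t];C^{1}(\TT^{d}))})$, uniformly on the bounded set~$A$. For the nonlinear source, one uses that the evolution preserves non-negativity and total mass, so $\rho_{v}^{(\scale)}$ remains a probability density and hence $\norm{\nabla W\ast\rho_{v}^{(\scale)}}_{L^{\infty}(\TT^{d})}\leq\norm{\nabla W}_{L^{\infty}(\TT^{d})}$; a second $L^{2}$-energy estimate for $\rho_{v}^{(\scale)}$ itself, combined with Gronwall, then provides an $\scale$-uniform bound on $\norm{\rho_{v}^{(\scale)}}_{L^{\infty}([0,t];L^{2}(\TT^{d}))}$ in terms of $A$, $\norm{W}_{C^{2}(\TT^{d})}$ and~$t$.

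Integrating the differential inequality in time and using $w|_{t=0}=0$ then yields $\norm{w(t)}_{L^{2}(\TT^{d})}^{2}\leq\scale\,C_{t}^{2}$, from which the claim follows at once by taking square roots. The main subtlety I expect to encounter is ensuring that the auxiliary bound on $\norm{\rho_{v}^{(\scale)}}_{L^{2}}$ is genuinely uniform in~$\scale$: the linear estimate $\norm{\nabla W\ast\rho_{v}^{(\scale)}}_{L^{\infty}}\leq\norm{\nabla W}_{L^{\infty}}$ afforded by the probability-density structure is precisely what prevents a circular, quadratic-in-$\norm{\rho_{v}^{(\scale)}}_{L^{2}}$ estimate from appearing in the energy bookkeeping for $\rho_{v}^{(\scale)}$ and is therefore essential to the argument.
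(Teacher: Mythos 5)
Your proposal is correct and follows precisely the ``standard energy estimates'' that the paper's one-line proof invokes: subtract the two equations, test against the difference~$w$, use incompressibility to kill the transport contribution and the inherited viscosity to absorb the cross terms, and then supply a priori $H^1$- and $L^2$-bounds for the two source terms via characteristics (for $\rho_v^{(0)}$) and a Gronwall energy estimate (for $\rho_v^{(\alpha)}$).

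One small point worth flagging in your write-up: the Gronwall bound for $\rho_v^{(\alpha)}$ that you sketch yields $\lVert\rho_v^{(\alpha)}(t)\rVert_{L^2}^2\lesssim\lVert\rho_0\rVert_{L^2}^2\exp(C\alpha\lVert W\rVert_{C^2}t)$, which is \emph{not} uniform in $\alpha$ over $(0,\infty)$ as you claim --- the exponent carries a factor of $\alpha$. For the application in Lemma~\ref{lem:reachability} this is immaterial, since there $\alpha$ is sent to zero and one may restrict to $\alpha\le1$ (where the bound is indeed uniform). The paper's statement ``for every $\alpha>0$'' with $C_t$ independent of $\alpha$ shares this same imprecision, so your reconstruction faithfully reflects the intended argument; just be aware that, strictly as written, the constant does pick up an $\alpha$-dependence from this step unless $\alpha$ is a priori confined to a bounded interval.
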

\begin{proof}
	The claim follows by standard energy estimates.
\end{proof}
\section{Joint measurability of the SPDE}\label{app:joint_measurability}
Let $W\in C^{2}(\TT^{d})$, $\reg>2$, $\colour\in h^{\reg}(\ZZ_{0}^{d})$ be radially symmetric and $\str>0$. In this appendix we show that the solution map $P:[0,\infty)\times\Omega\times\init{0}\to\init{0}$ to the SPDE~\eqref{eq:SPDE} is $\bigl(\mathcal{B}([0,\infty))\otimes\mathcal{F}^{0}\otimes\mathcal{B}(\init{0}),\mathcal{B}(\init{0})\bigr)$-measurable.

We first prove the claim for finite time horizons.
\begin{lemma}\label{lem:joint_measurability_for_RDS_finite_time_horizon}
	Let $T>0$
	and $\rds\from[0,T]\times\Omega\times\init{0}\to\init{0}$
	be the solution map to the SPDE~\eqref{eq:SPDE}. Then, $\rds$ is
	$\bigl(\mathcal{B}([0,T])\otimes\mathcal{F}^{0}\otimes\mathcal{B}(\init{0}),\mathcal{B}(\init{0})\bigr)$-measurable.
\end{lemma}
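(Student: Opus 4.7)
The plan is to apply the standard Carath\'eodory-type joint measurability criterion: if $f\from X\times Y\to Z$ is measurable in~$x$ for each fixed~$y$ and continuous in~$y$ for each fixed~$x$, with~$Y$ separable metric and~$Z$ metric, then~$f$ is $(\Sigma_{X}\otimes\mathcal{B}(Y),\mathcal{B}(Z))$-measurable. Taking $X=[0,T]\times\Omega$ with $\Sigma_{X}=\mathcal{B}([0,T])\otimes\mathcal{F}^{0}$, $Y=\init{0}$ and $Z=\init{0}$ (both Polish, hence separable metric), the lemma reduces to two verifications: pathwise continuity of~$\rds$ in~$\rho_{0}$, and joint measurability in~$(t,\omega)$ for each fixed~$\rho_{0}$.

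The continuity of $\rho_{0}\mapsto\rds(t,\omega,\rho_{0})$ for each fixed $(t,\omega)$ is already available from Lemma~\ref{lem:well_posedness_SPDE}, which gives pathwise local Lipschitz continuity of $\rho_{0}\mapsto\rds(\place,\omega,\rho_{0})$ from $\init{0}$ into $C([0,T];\init{0})$; composing with the continuous time-evaluation $u\mapsto u(t)$ preserves continuity into $\init{0}$. For the $(t,\omega)$-measurability of $(t,\omega)\mapsto\rds(t,\omega,\rho_{0})$ for fixed $\rho_{0}$, I would use the flow-transformation representation
\begin{equation*}
\rds(t,\omega,\rho_{0})=\overline{\rho}(t,\flow^{-1}(t,\omega,\place);\flow(\omega),\rho_{0})
\end{equation*}
from Lemma~\ref{lem:well_posedness_SPDE} and compose measurable/continuous ingredients: (i) the map $\omega\mapsto\flow(\omega)\in C([0,T];\diffvol{2})$ is $\mathcal{F}^{0}$-measurable since~$\flow$ is a continuous RDS by Lemma~\ref{lem:RDS_for_stochastic_characteristics} and Borel measurability into the Polish path space $C([0,T];\diffvol{2})$ is determined by finite-dimensional projections; (ii) the deterministic solution map $\flow\mapsto\overline{\rho}(\place,\place;\flow,\rho_{0})$ from $C([0,T];\diffvol{2})$ into $C([0,T];\init{0})$ is continuous (hence Borel-measurable) by Lemma~\ref{lem:well_posedness_flow_transformation_deterministic}; (iii) the joint continuity of $(t,\omega)\mapsto\flow^{-1}(t,\omega,\place)\in\diffvol{2}$ follows from the continuity of the inversion map discussed in Appendix~\ref{app:continuity_inversion_map}, combined with the continuity in~$t$ built into the RDS; and (iv) the change-of-variables map $(u,\psi)\mapsto u\circ\psi$ is continuous from $\init{0}\times\diffvol{2}$ into $\init{0}$, since $\psi$ is volume-preserving and the change-of-variables formula combined with $C^{2}$-control on $\psi$ yields uniform $L^{2}$-continuity. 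Stringing these together produces the required joint $(t,\omega)$-measurability.

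The main technical point is merely keeping track of the correct topologies in these compositions, in particular ensuring $L^{2}$-continuity of the change of variables $u\mapsto u\circ\flow^{-1}(t,\omega,\place)$ when the diffeomorphism varies continuously in the $C^{2}$-topology; this is straightforward but must be done carefully since the joint continuity of $(t,\omega)\mapsto\flow^{-1}(t,\omega,\place)$ lives in the compact-open topology on $\diffvol{2}$. Once both Carath\'eodory conditions are in place, the joint measurability of~$\rds$ follows directly.
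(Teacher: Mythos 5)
Your overall skeleton is the same as the paper's: represent $\rds$ through the flow transformation, get measurability in $(t,\omega)$ by composing the measurable map $\omega\mapsto\flow(\omega)$ with the (deterministic, continuous) solution map in~$\flow$, and finish with the Carath\'eodory criterion using the pathwise continuity in~$\rho_{0}$ from Lemma~\ref{lem:well_posedness_SPDE}; your items (ii)--(iv) are in effect a re-derivation of Lemma~\ref{lem:continuity_flow_to_SPDE}, which you could simply cite. The genuine gap is in your step (i). Saying that Borel measurability into the Polish path space $C([0,T];\diffvol{2})$ ``is determined by finite-dimensional projections'' only reduces the problem to measurability of the time-evaluations $\omega\mapsto\flow(t,\omega,\place)$ \emph{as $\diffvol{2}$-valued maps}, and this is precisely what the RDS property does not give you: Lemma~\ref{lem:RDS_for_stochastic_characteristics} provides joint measurability of the $\TT^{d}$-valued map $(t,\omega,x)\mapsto\flow(t,\omega,x)$, i.e.\ measurability of pointwise evaluations, not of $\omega\mapsto\flow(t,\omega,\place)$ in the $C^{2}$-topology. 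Bridging this is the second half of the paper's proof: one first upgrades pointwise measurability to measurability of $\omega\mapsto\flow(\omega)\in C([0,T]\times\TT^{d};\TT^{d})$ via a Carath\'eodory/compactness argument (\cite[Thm.~4.55]{aliprantis_border_06}), and then uses the Lusin--Souslin theorem to transfer from the coarse uniform topology to the finer Borel structure of $\diffvol{2}$ (through the continuous injection $\diffvol{2}\embed C(\TT^{d};\TT^{d})$), which is the structure needed for the continuity of $\flow\mapsto\rho(\place,\place;\flow,\rho_{0})$. Without this topology upgrade your composition does not close.

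A secondary point: your (iii) asserts ``joint continuity of $(t,\omega)\mapsto\flow^{-1}(t,\omega,\place)$,'' but there is no continuity in~$\omega$ available (and none is needed). The correct statement, which is what makes your composition work once (i) is repaired, is that the deterministic map $(t,\varphi)\mapsto\varphi^{-1}(t,\place)$ is jointly continuous from $[0,T]\times C([0,T];\diffvol{2})$ (into $\diff{1}$, note the loss of one derivative in Lemma~\ref{lem:inversion_map_continuity_time_dependent}), and that one then precomposes with the merely \emph{measurable} map $\omega\mapsto\flow(\omega)$. With (i) proved as in the paper and (iii) rephrased this way, your argument is correct and essentially coincides with the paper's proof.
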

\begin{proof}
	Denote by $\rho(t,x;\flow(\omega),\rho_{0})$
	the solution to the SPDE~\eqref{eq:SPDE} constructed in Lemma~\ref{lem:well_posedness_SPDE}. It follows that $\rds(t,\omega,\rho_{0})(x)=\rho(t,x;\flow(\omega),\rho_{0})$,
	where by Lemma~\ref{lem:continuity_flow_to_SPDE}, $\flow\mapsto\rho(\place,\place;\flow,\rho_{0})$ is continuous
	from $C([0,T];\diffvol{2})$ to $C([0,T];\init{0})$. 
	
	Assume for the moment that $\omega\to\flow(\omega)$ is $\bigl(\mathcal{F}^{0},\mathcal{B}(C([0,T];\diffvol{2}))\bigr)$-measurable as a map from $\Omega$ to $C([0,T];\diffvol{2})$. 
	It then follows that
	$\omega\mapsto\rho(t,\place;\flow(\omega),\rho_{0})$ is $\bigl(\mathcal{F}^{0},\mathcal{B}(\init{0})\bigr)$-measurable
	for every fixed $t\in[0,T]$. Further, $t\mapsto\rho(t,\place;\flow(\omega),\rho_{0})$
	is continuous from $[0,T]$ to $\init{0}$ for every fixed $\omega\in\Omega$.
	Hence, $(t,\omega)\mapsto\rho(t,\place;\flow(\omega),\rho_{0})$ is a Carathéodory
	function (cf.~\cite[Def.~4.50]{aliprantis_border_06}), and an
	application of~\cite[Lem.~4.51]{aliprantis_border_06} yields
	that it is $\bigl(\mathcal{B}([0,T])\otimes\mathcal{F}^{0},\mathcal{B}(\init{0})\bigr)$-measurable.

	Next, for fixed $(t,\omega)\in[0,T]\times\Omega$, it follows by Lemma~\ref{lem:well_posedness_SPDE} that the map $\rho_{0}\mapsto\rho(t,\place;\flow(\omega),\rho_{0})$ is continuous from $\init{0}$ to $\init{0}$. 
	Therefore,
	another application of~\cite[Lem.~4.51]{aliprantis_border_06}
	yields that the map $(t,\omega,\rho_{0})\mapsto\rds(t,\omega,\rho_{0})=\rho(t,\place;\flow(\omega),\rho_{0})$
	is $\bigl(\mathcal{B}([0,T])\otimes\mathcal{F}^{0}\otimes\mathcal{B}(\init{0}),\mathcal{B}(\init{0})\bigr)$-measurable.

	Hence, it suffices to show that $\omega\to\flow(\omega)$ is $\bigl(\mathcal{F}^{0},\mathcal{B}(C([0,T];\diffvol{2}))\bigr)$-measurable
	as a map from $\Omega$ to $C([0,T];\diffvol{2})$. Using that the embedding
	$\diffvol{2}\subset C(\TT^{d};\TT^{d})$ is continuous, it follows by
	the Lusin--Souslin theorem~\cite[Thm.~15.1]{kechris_95}, that
	$\mathcal{B}(\diffvol{2})\subset\mathcal{B}(C(\TT^{d};\TT^{d}))$. In
	particular, it is sufficient to show that $\flow$ is $\bigl(\mathcal{F}^{0},\mathcal{B}(C([0,T];C(\TT^{d};\TT^{d})))\bigr)$-measurable.
	Using that $C([0,T];C(\TT^{d};\TT^{d}))\simeq C([0,T]\times\TT^{d};\TT^{d})$,
	it suffices further to show that $\flow$ is $\bigl(\mathcal{F}^{0},\mathcal{B}(C([0,T]\times\TT^{d};\TT^{d}))\bigr)$-measurable.

	For fixed $(t,x)\in[0,T]\times\TT^{d}$, it follows by the joint measurability
	of the~RDS~$\flow$ that $\omega\mapsto\flow(t,\omega,x)$ is $(\mathcal{F}^{0},\mathcal{B}(\TT^{d}))$-measurable.
	On the other hand for fixed $\omega\in\Omega$, it follows by $\flow$
	being a continuous RDS,
	that the map $(t,x)\mapsto\flow(t,\omega,x)$ is continuous from $[0,T]\times\TT^{d}$
	to $\TT^{d}$. Hence, $\flow\from[0,T]\times\Omega\times\TT^{d}\to\TT^{d}$
	is a Carathéodory function. Using the compactness of $[0,T]\times\TT^{d}$,
	it follows by an application of~\cite[Thm.~4.55]{aliprantis_border_06}
	that $\flow$ is $\bigl(\mathcal{F}^{0},\mathcal{B}(C([0,T]\times\TT^{d};\TT^{d}))\bigr)$-measurable.
	This yields the claim.
\end{proof}
Next, let us prove joint measurability for infinite time horizons.
\begin{lemma}\label{lem:joint_measurability_for_RDS}
	Let $\rds\from[0,\infty)\times\Omega\times\init{0}\to \init{0}$
	be the solution map to the SPDE~\eqref{eq:SPDE}. Then, $\rds$ is
	$\bigl(\mathcal{B}([0,\infty))\otimes\mathcal{F}^{0}\otimes\mathcal{B}(\init{0}),\mathcal{B}(\init{0})\bigr)$-measurable.
\end{lemma}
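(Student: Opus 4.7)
The plan is to reduce to the finite-time case established in Lemma~\ref{lem:joint_measurability_for_RDS_finite_time_horizon} by a standard exhaustion argument. Since the full space $[0,\infty) \times \Omega \times \init{0}$ can be written as the countable increasing union $\bigcup_{n \in \NN} [0,n] \times \Omega \times \init{0}$, and measurability of a function is preserved when passing to countable unions of pieces on which it is measurable, this should yield the claim immediately.

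More concretely, I would proceed as follows. By pathwise uniqueness of the weak solution to~\eqref{eq:SPDE} (Lemma~\ref{lem:well_posedness_SPDE}), the restriction of the infinite-horizon solution map~$\rds$ to $[0,n] \times \Omega \times \init{0}$ coincides with the finite-horizon solution map on $[0,n]$. Lemma~\ref{lem:joint_measurability_for_RDS_finite_time_horizon} therefore ensures that this restriction is $\bigl(\mathcal{B}([0,n]) \otimes \mathcal{F}^{0} \otimes \mathcal{B}(\init{0}),\,\mathcal{B}(\init{0})\bigr)$-measurable for every $n \in \NN$. Now for any $B \in \mathcal{B}(\init{0})$ one has the decomposition
\begin{equation*}
	\rds^{-1}(B) = \bigcup_{n \in \NN}\bigl(\rds^{-1}(B) \cap ([0,n] \times \Omega \times \init{0})\bigr),
\end{equation*}
and each term in the union lies in $\mathcal{B}([0,n]) \otimes \mathcal{F}^{0} \otimes \mathcal{B}(\init{0})$ by the previous step. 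Since $[0,n]$ is a Borel subset of $[0,\infty)$, we have $\mathcal{B}([0,n]) \otimes \mathcal{F}^{0} \otimes \mathcal{B}(\init{0}) \subset \mathcal{B}([0,\infty)) \otimes \mathcal{F}^{0} \otimes \mathcal{B}(\init{0})$, so each piece sits in the target product $\sigma$-algebra. Taking the countable union then yields $\rds^{-1}(B) \in \mathcal{B}([0,\infty)) \otimes \mathcal{F}^{0} \otimes \mathcal{B}(\init{0})$, which is the desired measurability.

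There is no real obstacle here; the only point that requires a moment's care is verifying that the restriction of the infinite-horizon map agrees with the finite-horizon solution map, which is immediate from uniqueness. Everything else is a bookkeeping exercise with product $\sigma$-algebras and countable exhaustion.
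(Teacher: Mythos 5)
Your argument is correct, but it uses a different reduction than the paper. The paper proves the claim by introducing, for each $T>0$, the stopped map $P^{(T)}(t,\omega,\rho_{0})$ which equals $P(t,\omega,\rho_{0})$ for $t\leq T$ and freezes at $P(T,\omega,\rho_{0})$ for $t>T$; each $P^{(T)}$ is jointly measurable on $[0,\infty)\times\Omega\times\init{0}$ by the finite-horizon lemma, and $P$ is then the pointwise limit of the $P^{(T)}$ as $T\to\infty$, hence measurable by the standard stability of measurability under pointwise limits. You instead work directly with preimages: you restrict $P$ to the slabs $[0,n]\times\Omega\times\init{0}$, observe that each restriction is measurable for the trace product $\sigma$-algebra by the finite-horizon lemma (and, as you correctly note in passing, this restriction genuinely coincides with the finite-horizon solution map by pathwise uniqueness), and then write $\rds^{-1}(B)$ as a countable union over $n$. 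The one small point to be careful about, which you handle adequately, is the meaning of the inclusion $\mathcal{B}([0,n])\otimes\mathcal{F}^{0}\otimes\mathcal{B}(\init{0})\subset\mathcal{B}([0,\infty))\otimes\mathcal{F}^{0}\otimes\mathcal{B}(\init{0})$: the left-hand side is a $\sigma$-algebra of subsets of $[0,n]\times\Omega\times\init{0}$, and the claim is that each such set, regarded as a subset of the larger space, belongs to the right-hand side; this holds because the left-hand side is exactly the trace of the right-hand side on the measurable rectangle $[0,n]\times\Omega\times\init{0}$. Both routes are elementary and about equally long; yours avoids introducing the auxiliary stopped maps and the appeal to closure of measurable functions under pointwise limits, arguing purely at the level of $\sigma$-algebras and preimages, so it is if anything slightly more hands-on. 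Either presentation is acceptable.
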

\begin{proof}
	For every $T>0$, define
	\begin{equation*}
		P^{(T)}(t,\omega,\rho_{0})
		\defeq
		\begin{cases}
			P(t,\omega,\rho_{0}) & t\leq T\\
			P(T,\omega,\rho_{0}) & t>T
		\end{cases}
		\,.
	\end{equation*}
	In particular, $\lim_{T\to\infty}P^{(T)}(t,\omega,\rho_{0})=P(t,\omega,\rho_{0})$
	for every $(t,\omega,\rho_{0})\in[0,\infty)\times\Omega\times \init{0}$.
	An application of Lemma~\ref{lem:joint_measurability_for_RDS_finite_time_horizon}
	yields that each $P^{(T)}$ is $\bigl(\mathcal{B}([0,T])\otimes\mathcal{F}^{0}\otimes\mathcal{B}(\init{0}),\mathcal{B}(\init{0})\bigr)$-measurable,
	which, using the inclusion $\mathcal{B}([0,T])\subset\mathcal{B}([0,\infty))$,
	yields that each $P^{(T)}$ is $\bigl(\mathcal{B}([0,\infty))\otimes\mathcal{F}^{0}\otimes\mathcal{B}(\init{0}),\mathcal{B}(\init{0})\bigr)$-measurable.
	The claim follows since the pointwise limit of measurable functions
	is measurable~\cite[Lem.~4.29]{aliprantis_border_06}. 
\end{proof}
\section{Continuity of the inversion map}\label{app:continuity_inversion_map}
Recall that we denote for $\flow\in C([0,\infty);\homeo)$ the \emph{spatial} inverse by~$\flow^{-1}$. In this appendix we show that the inversion map is continuous when considered as a map between spaces of time-dependent diffeomorphisms (Lemma~\ref{lem:inversion_map_continuity_time_dependent}). We then use this to show continuity of the map $\flow\mapsto u\circ\flow^{-1}$ for fixed $u\in C([0,\infty);L^{2}(\TT^{d}))$ (Lemma~\ref{lem:continuity_CTL2_concatenation_diffeomorphism_flow_inverse}). By a similar argument we also show the uniform continuity of the map $(u,\flow)\mapsto u\circ\flow^{-1}$ if $u$ is taken from a compact subset of $L^{2}(\TT^{d})$ (Lemma~\ref{lem:uniform_continuity_L2_concatenation_diffeomorphism_flow_inverse}). For reasons of generality, we prove these results for arbitrary, not necessarily volume-preserving, diffeomorphisms.
\begin{lemma}\label{lem:inversion_map_continuity_time_dependent}
	Let $T>0$. The map $\flow\mapsto\flow^{-1}$ is continuous from $C([0,T];\diff{1})$ to $C([0,T];\homeo)$ and from $C([0,T];\diff{m+1})$ to $C([0,T];\diff{m})$ for every $m\in\NN$.
\end{lemma}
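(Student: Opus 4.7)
The plan is to treat the two statements in turn, using for the first a compactness--contradiction argument that exploits injectivity of the limit diffeomorphism, and for the second an induction on $m$ built on the chain-rule identity $\D(\flow^{-1})=(\D\flow\circ\flow^{-1})^{-1}$ together with the continuity of composition and matrix inversion. Throughout, I use that the relevant topologies are just uniform convergence (of the map and its derivatives) since $\TT^{d}$ is compact, so $C([0,T];\diff{m})$ embeds in $C([0,T]\times\TT^{d};\TT^{d})$ with continuous total derivatives up to order $m$.

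For the first part, let $\flow_{n}\to\flow$ in $C([0,T];\diff{1})$ and suppose, towards a contradiction, that $\flow_{n}^{-1}\not\to\flow^{-1}$ uniformly on $[0,T]\times\TT^{d}$. Extract $\eps>0$ and sequences $t_{n}\in[0,T]$, $x_{n}\in\TT^{d}$ with $\lvert\flow_{n}^{-1}(t_{n},x_{n})-\flow^{-1}(t_{n},x_{n})\rvert\geq\eps$. Set $y_{n}\defeq\flow_{n}^{-1}(t_{n},x_{n})$ and $z_{n}\defeq\flow^{-1}(t_{n},x_{n})$. By compactness of $[0,T]\times\TT^{d}$ I may pass to a subsequence so that $(t_{n},x_{n},y_{n},z_{n})\to(t_{*},x_{*},y_{*},z_{*})$ with $\lvert y_{*}-z_{*}\rvert\geq\eps$. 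The identities $\flow_{n}(t_{n},y_{n})=x_{n}=\flow(t_{n},z_{n})$, combined with uniform convergence $\flow_{n}\to\flow$ on $[0,T]\times\TT^{d}$ and joint continuity of $\flow$, yield $\flow(t_{*},y_{*})=x_{*}=\flow(t_{*},z_{*})$. Since $\flow(t_{*},\cdot)\in\diff{1}$ is injective, $y_{*}=z_{*}$, a contradiction. This only used $C^{0}$ convergence of $\flow_{n}$ and injectivity of the limit, not the $C^{1}$ structure as such, so in particular the inversion map $C([0,T];\diff{1})\to C([0,T];\homeo)$ is continuous.

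For the second part, I proceed by induction on $m\geq1$. For the base case $m=1$, assume $\flow_{n}\to\flow$ in $C([0,T];\diff{2})$. The first part gives $\flow_{n}^{-1}\to\flow^{-1}$ in $C([0,T];\homeo)$. Since $\D\flow_{n}\to\D\flow$ uniformly on $[0,T]\times\TT^{d}$ with values in $\R^{d\times d}$, and the composition map $(F,g)\mapsto F\circ g$ is continuous from $C([0,T];C^{0}(\TT^{d};\R^{d\times d}))\times C([0,T];C(\TT^{d};\TT^{d}))$ into $C([0,T]\times\TT^{d};\R^{d\times d})$ (by the standard two-term estimate using uniform continuity of $\D\flow$ on the compact $[0,T]\times\TT^{d}$), we get $\D\flow_{n}\circ\flow_{n}^{-1}\to\D\flow\circ\flow^{-1}$ uniformly. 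Since $\flow\in\diff{2}$ implies $\lvert\det\D\flow\rvert$ is bounded below uniformly on the compact $[0,T]\times\TT^{d}$, the same holds for $\flow_{n}$ eventually, and continuity of matrix inversion on the open set of nondegenerate matrices yields
\begin{equation*}
\D(\flow_{n}^{-1})=(\D\flow_{n}\circ\flow_{n}^{-1})^{-1}\to(\D\flow\circ\flow^{-1})^{-1}=\D(\flow^{-1})
\end{equation*}
uniformly; together with the $C^{0}$ convergence this gives $\flow_{n}^{-1}\to\flow^{-1}$ in $C([0,T];\diff{1})$. For the inductive step, if the statement holds for $m$ and $\flow_{n}\to\flow$ in $C([0,T];\diff{m+2})$, the induction hypothesis yields $\flow_{n}^{-1}\to\flow^{-1}$ in $C([0,T];\diff{m})$. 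Composition $(F,g)\mapsto F\circ g$ is continuous from $C^{m}\times C^{m}$ into $C^{m}$ by Faà di Bruno and the base case argument applied to each mixed derivative, so $\D\flow_{n}\circ\flow_{n}^{-1}\to\D\flow\circ\flow^{-1}$ in $C([0,T];C^{m}(\TT^{d};\R^{d\times d}))$; matrix inversion is smooth on the open set of nondegenerate matrices and hence continuous in the $C^{m}$ topology (again by Faà di Bruno), which upgrades the convergence of $\D(\flow_{n}^{-1})$ to the $C^{m}$ norm and therefore $\flow_{n}^{-1}\to\flow^{-1}$ in $C([0,T];\diff{m+1})$.

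The only genuine obstacle is the bookkeeping for the $C^{m}$-continuity of composition and matrix inversion; both are standard but slightly tedious Faà di Bruno arguments, which is why one loses one derivative (from $\diff{m+1}$ to $\diff{m}$) rather than preserving regularity. The initial injectivity-based argument is the cleanest way to avoid quantitative inverse function theorem estimates, which would otherwise require uniform lower bounds on $\lvert\det\D\flow\rvert$ from the start.
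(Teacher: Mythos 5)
Your proof is correct, but it takes a genuinely different route from the paper's. The paper observes that since $\flow_n\to\flow$ in $C([0,T];\diff{m+1})$, the set of time-slices $K=\{\flow_n(t)\}\cup\{\flow(t)\}$ is bounded in $C^{m+1}(\TT^d;\TT^d)$ and hence compact in $C^m(\TT^d;\TT^d)$ (Arzel\`a--Ascoli); it then invokes, without further proof, the known fact that inversion $I(\flow)=\flow^{-1}$ is continuous on $\diff{m}$ (with the convention $\diff{0}\defeq\homeo$), upgrades this to uniform continuity of $I$ on the compact $K$, and concludes directly from the $C^m$-convergence of $\flow_n$. This treats all values of $m\in\NN\cup\{0\}$ in one stroke, at the cost of citing the single-time continuity of inversion as given. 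You instead re-prove that fact from scratch: a compactness--injectivity contradiction for the $C^0$ level, and an induction via the chain-rule identity $\D(\flow^{-1})=(\D\flow\circ\flow^{-1})^{-1}$, continuity of composition, and smoothness of matrix inversion for higher derivatives. Your argument is longer and carries the Fa\`a di Bruno bookkeeping you mention, but it is fully self-contained and makes explicit where each derivative is spent; the paper's argument is shorter but delegates the essential analytic content to an unreferenced classical statement. Both are valid.
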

\begin{proof}
	For the sake of avoiding a case distinction, let us denote $\diff{0}\defeq\homeo$. Let $m\in\NN\cup\{0\}$, $\flow\in C([0,T];\diff{m+1})$
	and $(\flow_{n})_{n\in\NN}$ be such that $\flow_{n}\to\flow$ in
	$C([0,T];\diff{m+1})$ as $n\to\infty$. It suffices to show that
	$\flow_{n}^{-1}\to\flow^{-1}$ in $C([0,T];\diff{m})$ as $n\to\infty$.
	Using that $\flow_{n}\to\flow$ in $C([0,T];\diff{m+1})$ as $n\to\infty$,
	it follows that $K\defeq\{\flow_{n}(t):n\in\mathbb{N},\,t\in[0,T]\}\cup\{\flow(t):t\in[0,T]\}$
	is closed and bounded in $C^{m+1}(\mathbb{T}^{d};\mathbb{T}^{d})$,
	hence compact in $C^{m}(\mathbb{T}^{d};\mathbb{T}^{d})$. Further,
	the inversion map $I(\flow)\coloneqq\flow^{-1}$ is continuous from
	$\diff{m}$ to $\diff{m}$,
	which yields that $I$ is uniformly continuous on $K$. Consequently,
	it follows by the convergence $\lim_{n\to\infty}\lVert\flow-\flow_{n}\rVert_{C([0,T];C^{m}(\mathbb{T}^{d};\mathbb{T}^{d}))}=0$,
	that $\lim_{n\to\infty}\lVert\flow^{-1}-\flow_{n}^{-1}\rVert_{C([0,T];C^{m}(\mathbb{T}^{d};\mathbb{T}^{d}))}=0$.
\end{proof}
\begin{lemma}\label{lem:continuity_CTL2_concatenation_diffeomorphism_flow_inverse}
	Let $T>0$ and $u\in C([0,T];L^{2}(\mathbb{T}^{d}))$. Then, the map $\flow\mapsto u\circ\flow^{-1}$ is continuous from $C([0,T];\diff 1)$ to $C([0,T];L^{2}(\T^{d}))$.
\end{lemma}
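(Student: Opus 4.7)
My plan is to combine the continuity of the spatial inversion established in Lemma~\ref{lem:inversion_map_continuity_time_dependent} with a standard density argument in $L^{2}(\TT^{d})$. Fix $u\in C([0,T];L^{2}(\TT^{d}))$ and let $\flow_{n}\to\flow$ in $C([0,T];\diff{1})$. The starting point will be Lemma~\ref{lem:inversion_map_continuity_time_dependent}, which yields $\flow_{n}^{-1}\to\flow^{-1}$ uniformly on $[0,T]\times\TT^{d}$. Since the family $\{\flow_{n}\}_{n\in\NN}\cup\{\flow\}$ is relatively compact in $C([0,T];\diff{1})$, the Jacobians are uniformly bounded on $[0,T]\times\TT^{d}$, so there exists $M>0$ such that the change of variables formula gives
\begin{equation*}
	\norm{g\circ\flow_{n}^{-1}(t,\place)}_{L^{2}(\TT^{d})}^{2}=\int_{\TT^{d}}\abs{g(y)}^{2}\abs{\det\D\flow_{n}(t,y)}\,\dd y\leq M\norm{g}_{L^{2}(\TT^{d})}^{2}
\end{equation*}
for every $g\in L^{2}(\TT^{d})$, $t\in[0,T]$ and $n\in\NN$, with the analogous bound for $\flow$.

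Having secured this uniform $L^{2}$-bound, the next step is to approximate $u$ by a jointly continuous function. Since $u([0,T])$ is compact in $L^{2}(\TT^{d})$, a finite-cover argument combined with spatial mollification will produce, for every $\eps>0$, some $v\in C([0,T]\times\TT^{d})$ with $\sup_{t\in[0,T]}\norm{u(t,\place)-v(t,\place)}_{L^{2}(\TT^{d})}<\eps$. Because $v$ is then uniformly continuous on the compact domain $[0,T]\times\TT^{d}$ and $\flow_{n}^{-1}\to\flow^{-1}$ uniformly, the composition $v\circ\flow_{n}^{-1}$ will converge to $v\circ\flow^{-1}$ uniformly on $[0,T]\times\TT^{d}$, hence in $C([0,T];L^{2}(\TT^{d}))$.

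The conclusion will then follow from the triangle inequality
\begin{equation*}
	u\circ\flow_{n}^{-1}-u\circ\flow^{-1}=(u-v)\circ\flow_{n}^{-1}+\bigl(v\circ\flow_{n}^{-1}-v\circ\flow^{-1}\bigr)+(v-u)\circ\flow^{-1}\,,
\end{equation*}
where the uniform $L^{2}$-bound controls the first and third terms by $M^{1/2}\eps$ each, while the middle term tends to zero as $n\to\infty$. Hence $\limsup_{n\to\infty}\norm{u\circ\flow_{n}^{-1}-u\circ\flow^{-1}}_{C([0,T];L^{2}(\TT^{d}))}\leq 2M^{1/2}\eps$, and sending $\eps\to 0$ will yield the desired continuity. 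The same approximation scheme also shows that $u\circ\flow^{-1}$ lies in $C([0,T];L^{2}(\TT^{d}))$, since it is the $C([0,T];L^{2}(\TT^{d}))$-uniform limit of the continuous functions $v^{(k)}\circ\flow^{-1}$. I anticipate no real obstruction: the only mildly delicate point will be producing the jointly continuous approximation uniformly in $t$, which is a routine consequence of the compactness of $u([0,T])$ in $L^{2}(\TT^{d})$.
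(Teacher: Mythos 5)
Your proof is correct and follows essentially the same route as the paper's: a uniform change-of-variables Jacobian bound combined with a density argument, using Lemma~\ref{lem:inversion_map_continuity_time_dependent} and uniform continuity of the approximants on the compact space-time domain. The paper packages the final step via an abstract uniform-boundedness lemma from Engel--Nagel with dense subset $C([0,T];C^\infty(\TT^d))$, whereas you carry out the $3\varepsilon$-argument by hand with jointly continuous approximants; this difference is purely cosmetic.
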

\begin{proof}
	We mimic the proof of strong continuity of the translation semigroup, see e.g.~\cite[Chpt.~I, Subsec.~3.c]{engel_nagel_06}.
	Let $\flow\in C([0,T];\diff 1)$ and $(\flow_{n})_{n\in\mathbb{N}}$
	be a sequence such that $\flow_{n}\in C([0,T];\diff 1)$ for every
	$n\in\mathbb{N}$ and $\flow_{n}\to\flow$ in $C([0,T];\diff 1)$
	as $n\to\infty$. Using that $C([0,T];\diff 1)$ is a metric space,
	it suffices to show that $\lim_{n\to\infty}u\circ\flow_{n}^{-1}=u\circ\flow^{-1}$
	in $C([0,T];L^{2}(\T^{d}))$ for every $u\in C([0,T];L^{2}(\T^{d}))$.
	Combining the density of $C^{\infty}(\mathbb{T}^{d})$ in $L^{2}(\mathbb{T}^{d})$
	with a partition of unity (in time), it follows that $C([0,T];C^{\infty}(\T^{d}))$
	is dense in $C([0,T];L^{2}(\T^{d}))$. An application of~\cite[Chpt.~I, Lem.~1.2]{engel_nagel_06} then yields that it suffices to show that the sequence of
	linear operators $(F_{1/n})_{n\in\mathbb{N}}$ given by $(F_{1/n}u)(t,x)\defeq u(t,\flow_{n}^{-1}(t,x))$
	is uniformly bounded in $L(C([0,T];L^{2}(\mathbb{T}^{d})))$ and that
	$\lim_{n\to\infty}F_{1/n}u=F_{0}u\defeq u\circ\flow$ for all $u\in C([0,T];C^{\infty}(\T^{d}))$.

	We first show uniform boundedness. It follows by the integral transformation
	theorem, that for all $t\in[0,T]$,
	\begin{equation}\label{eq:concatenation_bound_inverse}
		\begin{split}
			\lVert(F_{1/n}u)(t)\rVert_{L^{2}(\mathbb{T}^{d})}^{2}&=\int_{\mathbb{T}^{d}}\lvert u(t,\flow_{n}^{-1}(t,x))\rvert^{2}\,\dd x\\
			&=\int_{\mathbb{T}^{d}}\lvert u(t,x)\rvert^{2}\lvert\det\D\flow_{n}(t,x)\rvert\,\dd x\\
			&\leq\lVert u(t,\place)\rVert_{L^{2}(\mathbb{T}^{d})}^{2}\lVert\det\D\flow_{n}(t,\place)\rVert_{L^{\infty}(\mathbb{T}^{d})}\,.
		\end{split}
	\end{equation}
	Using that $\flow_{n}\to\flow$ in $C([0,T];\diff{1})$ as $n\to\infty$,
	we can deduce 
	\begin{equation}\label{eq:Jacobi_matrix_uniform_control}
		\sup_{n\in\mathbb{N}}\sup_{t\in[0,T]}\lVert\det \D\flow_{n}(t,\place)\rVert_{L^{\infty}(\mathbb{T}^{d})}<\infty\,.
	\end{equation}
	Combining~\eqref{eq:concatenation_bound_inverse}~and~\eqref{eq:Jacobi_matrix_uniform_control},
	we obtain that $(F_{1/n})_{n\in\mathbb{N}}$ is bounded uniformly
	in $n\in\mathbb{N}$ as linear operators from $C([0,T];L^{2}(\mathbb{T}^{d}))$
	to $C([0,T];L^{2}(\mathbb{T}^{d}))$.

	Next we show that $\lim_{n\to\infty}F_{1/n}u=F_{0}u=u\circ\flow^{-1}$
	in $C([0,T];L^{2}(\mathbb{T}^{d}))$ for all $u\in C([0,T];C^{\infty}(\mathbb{T}^{d}))$.
	The convergence $\lim_{n\to\infty}\flow_{n}=\flow$ in $C([0,T];\diff 1)$,
	combined with an application of Lemma~\ref{lem:inversion_map_continuity_time_dependent}
	yields $\lim_{n\to\infty}\flow_{n}^{-1}=\flow^{-1}$ in $C([0,T];\homeo)$
	as $n\to\infty$. Since $[0,T]\times\mathbb{T}^{d}$ is compact and
	\begin{equation*}
		C([0,T];C(\mathbb{T}^{d}))\simeq C([0,T]\times\mathbb{T}^{d})\,,
	\end{equation*}
	it follows that $u$ is uniformly continuous in space-time. Therefore, we can conclude
	that $\lim_{n\to\infty}\lVert F_{0}u-F_{1/n}u\rVert_{C([0,T];C(\mathbb{T}^{d}))}=0$.
	Using the embedding $C(\mathbb{T}^{d})\hookrightarrow L^{2}(\mathbb{T}^{d})$,
	this yields the claim.
\end{proof}
\begin{lemma}\label{lem:uniform_continuity_L2_concatenation_diffeomorphism_flow_inverse}
	Let $T>0$, $A\subset L^{2}(\TT^{d})$ be relatively compact and $(\flow_{n})_{n\in\NN}$ be a sequence such that $\flow_{n}\in C([0,T];\diff{1})$ for every $n\in\NN$ and $\flow_{n}\to\flow$ in $C([0,T];\diff{1})$ as $n\to\infty$.
	Then, it holds that 
	\begin{equation*}
		\lim_{n\to\infty}\sup_{u_{0}\in\cl A}\norm{u_{0}\circ\flow_{n}^{-1}-u_{0}\circ\flow^{-1}}_{C([0,T];L^{2}(\TT^{d}))}=0\,.
	\end{equation*}
\end{lemma}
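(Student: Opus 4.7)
The plan is to upgrade the pointwise convergence from Lemma~\ref{lem:continuity_CTL2_concatenation_diffeomorphism_flow_inverse} to uniform convergence over the compact set $\cl A \subset L^{2}(\TT^{d})$ via an $\eps/3$-argument combined with a finite cover. The compactness of $\cl A$ in $L^{2}(\TT^{d})$ is the key structural input, together with the uniform boundedness of the Jacobians $\det\D\flow_{n}$ which follows from the convergence $\flow_{n}\to\flow$ in $C([0,T];\diff{1})$.

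First, I would extract from the convergence $\flow_n\to\flow$ in $C([0,T];\diff{1})$ the uniform estimate
\begin{equation*}
    M\defeq\sup_{n\in\NN}\sup_{t\in[0,T]}\lVert\det\D\flow_{n}(t,\place)\rVert_{L^{\infty}(\TT^{d})}+\sup_{t\in[0,T]}\lVert\det\D\flow(t,\place)\rVert_{L^{\infty}(\TT^{d})}<\infty\,,
\end{equation*}
which, combined with the integral transformation formula as in~\eqref{eq:concatenation_bound_inverse}, yields for any $v\in L^{2}(\TT^{d})$ the uniform operator bound
\begin{equation*}
    \sup_{n\in\NN}\sup_{t\in[0,T]}\lVert v\circ\flow_{n}^{-1}(t,\place)\rVert_{L^{2}(\TT^{d})}+\sup_{t\in[0,T]}\lVert v\circ\flow^{-1}(t,\place)\rVert_{L^{2}(\TT^{d})}\leq 2M^{1/2}\lVert v\rVert_{L^{2}(\TT^{d})}\,.
\end{equation*}

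Second, fix $\eps>0$. By compactness of $\cl A$ in $L^{2}(\TT^{d})$, cover $\cl A$ by finitely many $L^{2}$-balls $B_{L^{2}}(u^{(i)},\eps/(6M^{1/2}))$, $i=1,\ldots,N$, with centers $u^{(i)}\in\cl A$. For every $u_{0}\in\cl A$, pick some $i=i(u_{0})$ such that $\lVert u_{0}-u^{(i)}\rVert_{L^{2}(\TT^{d})}<\eps/(6M^{1/2})$, and then decompose
\begin{equation*}
\begin{split}
    u_{0}\circ\flow_{n}^{-1}-u_{0}\circ\flow^{-1}
    &=(u_{0}-u^{(i)})\circ\flow_{n}^{-1}\\
    &\quad+\bigl(u^{(i)}\circ\flow_{n}^{-1}-u^{(i)}\circ\flow^{-1}\bigr)\\
    &\quad+(u^{(i)}-u_{0})\circ\flow^{-1}\,.
\end{split}
\end{equation*}
Applying the uniform operator bound of the previous step to the first and third terms yields the control
\begin{equation*}
    \sup_{t\in[0,T]}\lVert(u_{0}-u^{(i)})\circ\flow_{n}^{-1}(t,\place)\rVert_{L^{2}(\TT^{d})}+\sup_{t\in[0,T]}\lVert(u^{(i)}-u_{0})\circ\flow^{-1}(t,\place)\rVert_{L^{2}(\TT^{d})}<\eps/3\,,
\end{equation*}
uniformly in $u_{0}\in\cl A$ and $n\in\NN$.

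Third, for each of the finitely many centers $u^{(i)}$, $i=1,\ldots,N$, an application of Lemma~\ref{lem:continuity_CTL2_concatenation_diffeomorphism_flow_inverse} (with $u\defeq u^{(i)}$ viewed as a time-independent element of $C([0,T];L^{2}(\TT^{d}))$) yields the existence of some $n_{i}\in\NN$ such that for all $n\geq n_{i}$,
\begin{equation*}
    \lVert u^{(i)}\circ\flow_{n}^{-1}-u^{(i)}\circ\flow^{-1}\rVert_{C([0,T];L^{2}(\TT^{d}))}<\eps/3\,.
\end{equation*}
Setting $n_{*}\defeq\max_{i=1,\ldots,N}n_{i}$, which is finite precisely because the cover is finite, the triangle inequality applied to the decomposition above gives uniformly in $u_{0}\in\cl A$ and $n\geq n_{*}$ that
\begin{equation*}
    \lVert u_{0}\circ\flow_{n}^{-1}-u_{0}\circ\flow^{-1}\rVert_{C([0,T];L^{2}(\TT^{d}))}<\eps\,,
\end{equation*}
which yields the claim as $\eps>0$ was arbitrary. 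The only potentially delicate point is ensuring that $M<\infty$ really depends only on the convergent sequence $(\flow_{n})_{n\in\NN}$ and its limit $\flow$, but this is immediate from the continuous embedding $C([0,T];\diff{1})\embed C([0,T]\times\TT^{d};\RR^{d\times d})$ applied to $\D\flow_{n}$ and the convergence $\flow_{n}\to\flow$, so no genuine obstruction arises.
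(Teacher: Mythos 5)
Your proof is correct and takes essentially the same route as the paper's: the paper invokes \cite[Chpt.~I, Lem.~1.2]{engel_nagel_06}, an abstract uniform-continuity lemma for uniformly bounded operator families, whose proof is precisely the finite-cover $\eps/3$-argument you carry out explicitly. The only superficial difference is that you apply the conclusion of Lemma~\ref{lem:continuity_CTL2_concatenation_diffeomorphism_flow_inverse} directly to the cover centres, while the paper reuses the boundedness and dense-set-convergence steps from that lemma's proof before citing the abstract result.
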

\begin{proof}
	Define the sequence of
	linear operators $(F_{1/n})_{n\in\mathbb{N}}$ acting on $u_{0}\in L^{2}(\TT^{d})$ by $(F_{1/n}u_{0})(t,x)\coloneqq u_{0}(\flow_{n}^{-1}(t,x))$. As in the proof of Lemma~\ref{lem:continuity_CTL2_concatenation_diffeomorphism_flow_inverse},
	it follows that $(F_{1/n})_{n\in\mathbb{N}}$ is uniformly bounded in $L(L^{2}(\mathbb{T}^{d});C([0,T];L^{2}(\mathbb{T}^{d})))$
	and that $\lim_{n\to\infty}F_{1/n}u_{0}=u_{0}\circ\flow^{-1}\defeq F_{0}u_{0}$
	for all $u_{0}\in C^{\infty}(\mathbb{T}^{d})$. An application of~\cite[Chpt.~I, Lem.~1.2]{engel_nagel_06} yields that the map $(1/n,u_{0})\mapsto F_{1/n}u_{0}$ is uniformly continuous on $\mathbb{N}\times\cl A$, which yields the claim.
\end{proof}
\section{Supports of measures}\label{app:supports_of_measures}
In this appendix we collect two auxiliary results on supports of measures that are used in the main text. Lemma~\ref{lem:support_push_forward} shows how the support of a measure transforms under pushforward by a continuous map. Lemma~\ref{lem:support_away} shows that for every non-trivial, inner-regular, Borel probability measure on a metric space, one can always find a non-trivial compact set of positive probability.
\begin{lemma}\label{lem:support_push_forward}
	Let $X$, $Y$ be separable metric spaces, $f\from X\to Y$ be continuous and $\mu$ be a Borel probability measure on $X$. Then, it follows that
	\begin{equation*}
		\supp(f_{\#}\mu)=\cl f(\supp(\mu))\,.
	\end{equation*}
\end{lemma}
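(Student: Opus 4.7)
The plan is to show the two inclusions $\supp(f_{\#}\meas)\subseteq\cl f(\supp(\meas))$ and $\cl f(\supp(\meas))\subseteq\supp(f_{\#}\meas)$ separately by unfolding the definition of the topological support of a measure.

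For the easier inclusion $\cl f(\supp(\meas))\subseteq\supp(f_{\#}\meas)$, I would pick $y=f(x)$ with $x\in\supp(\meas)$ and let $V\subseteq Y$ be any open neighbourhood of $y$. Continuity of~$f$ then gives that $f^{-1}(V)$ is an open neighbourhood of~$x$, so $\meas(f^{-1}(V))>0$ by definition of the support of~$\meas$. Hence $f_{\#}\meas(V)>0$ and $y\in\supp(f_{\#}\meas)$. This yields $f(\supp(\meas))\subseteq\supp(f_{\#}\meas)$, and closing on both sides (using that supports are closed) gives the claimed inclusion.

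For the reverse inclusion $\supp(f_{\#}\meas)\subseteq\cl f(\supp(\meas))$, I would show the contrapositive. Suppose $y\notin\cl f(\supp(\meas))$. Then there exists an open neighbourhood $V\subseteq Y$ of~$y$ disjoint from $f(\supp(\meas))$, i.e.\ $f^{-1}(V)\cap\supp(\meas)=\emptyset$, so that $f^{-1}(V)\subseteq X\setminus\supp(\meas)$. The main analytic input at this step is that $\meas(X\setminus\supp(\meas))=0$, which relies on separability (equivalently, second countability) of~$X$: the complement of the support is an open set of measure zero because it is the union of a countable base of open sets each of $\meas$-measure zero. Consequently $f_{\#}\meas(V)=\meas(f^{-1}(V))=0$, and hence $y\notin\supp(f_{\#}\meas)$.

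There is no real obstacle here; the only subtlety is remembering to invoke separability of~$X$ to guarantee that $\meas$ is concentrated on $\supp(\meas)$, which is used precisely in the second inclusion. Combining the two inclusions yields the desired equality.
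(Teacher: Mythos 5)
Your proof is correct. The paper itself gives no argument for this lemma and simply cites \cite[(5.2.6)]{ambrosio_gigli_savare_08}, so you have provided a self-contained proof where the paper relies on a reference. Both of your inclusions check out: the forward inclusion uses that $f^{-1}(V)$ is open by continuity and that $\supp(f_{\#}\meas)$ is closed, and the reverse inclusion correctly hinges on $\meas(X\setminus\supp(\meas))=0$, which is exactly where the separability (equivalently second countability) of the metric space~$X$ enters — in a non-separable space the complement of the support can have positive measure, so you were right to flag this as the one genuine hypothesis being used. This is the standard elementary argument, and it matches what the cited source establishes in a slightly more general form.
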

\begin{proof}
	See e.g.~\cite[(5.2.6)]{ambrosio_gigli_savare_08}.
\end{proof}
\begin{lemma}\label{lem:support_away}
	Let $X$ be a metric space and $\meas$ be
	an inner-regular (i.e.~tight) Borel probability measure on $X$. Let $x\in X$ and assume $\meas\neq\delta_{x}$.
	Then, there exists a compact subset $K$ such that $\meas(K)>0$ and
	$x\notin K$.
\end{lemma}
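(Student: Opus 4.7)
The plan is to reduce the statement to a standard application of inner regularity on the open set $X\setminus\{x\}$. First I would observe that since $X$ is a metric space, the singleton $\{x\}$ is closed and hence Borel measurable, so $\meas(\{x\})$ is well-defined. The key reduction is to show that $\meas\neq\delta_{x}$ implies $\meas(\{x\})<1$.

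Suppose for contradiction that $\meas(\{x\})=1$. Then for any Borel set $A\subset X$: if $x\in A$, monotonicity yields $\meas(A)\geq\meas(\{x\})=1=\delta_{x}(A)$, forcing equality since $\meas$ is a probability measure; and if $x\notin A$, then $A\subset X\setminus\{x\}$, which has measure zero, so $\meas(A)=0=\delta_{x}(A)$. Hence $\meas=\delta_{x}$, contradicting the assumption. Therefore $\meas(\{x\})<1$, which gives $\meas(X\setminus\{x\})>0$.

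Now, the set $X\setminus\{x\}$ is open and Borel, and by inner regularity of $\meas$,
\begin{equation*}
    \meas(X\setminus\{x\})=\sup\{\meas(K):K\subset X\setminus\{x\},\,K\text{ compact}\}\,.
\end{equation*}
Since the left-hand side is strictly positive, the supremum on the right-hand side must be attained (up to any $\eps>0$) by some compact subset $K\subset X\setminus\{x\}$ with $\meas(K)>0$. By construction, $x\notin K$, which yields the claim.

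There is no real obstacle here: the argument is essentially a one-line invocation of inner regularity, and the only minor subtlety is the separation of $x$ from the rest of the space, which is automatic because singletons are closed in any metric space (so that $X\setminus\{x\}$ is open and Borel, and inner regularity applies to it directly).
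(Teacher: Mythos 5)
Your proof is correct, and it follows a genuinely cleaner route than the paper's. The paper picks an arbitrary Borel set $A$ with $\meas(A)\neq\delta_{x}(A)$, asserts without justification that $A$ may be taken closed, and then runs a case analysis on whether $x\in A$, invoking regularity of metric spaces in the second case to separate $x$ from $A$ by an open set. You instead observe that $\meas\neq\delta_{x}$ is \emph{equivalent} to $\meas(\{x\})<1$ (the only-if direction being the contradiction argument you give, which is airtight), and then apply inner regularity once, to the open set $X\setminus\{x\}$. This collapses the case distinction entirely, removes the need for the topological separation argument, and sidesteps the paper's unproven ``WLOG $A$ closed'' reduction. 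Both proofs rely on inner regularity only for an open set, so neither needs the stronger hypothesis of inner regularity on all Borel sets. In short: your version proves the same statement from the same hypotheses, but via the tighter characterisation $\meas\neq\delta_{x}\iff\meas(X\setminus\{x\})>0$, which is the right thing to isolate.
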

\begin{proof}
	Since $\meas\neq\delta_{x}$, there exists some $A\in\mathcal{B}(X)$
	such that $\meas(A)\neq\delta_{x}(A)$. Without limitation of generality,
	we may assume $A$ to be closed. We now distinguish the cases $x\in A$
	and $x\notin A$. 

	If $x\in A$, then $1=\delta_{x}(A)\neq\meas(A)$, so that $\meas(A)<1$.
	In particular, $\meas(A^{\complement})>0$. Since $A^{\complement}$ is open, it follows
	by the inner regularity of $\meas$, that
	\begin{equation*}
		0<\meas(A^{\complement})=\sup_{\substack{K\subset A^{\complement}\\K\,\text{compact}}}\meas(K)\,.
	\end{equation*}
	In particular, we can find some compact set $K$ such that $\meas(K)>0$
	and $x\notin K$.

	If $x\notin A$, then $0=\delta_{x}(A)\neq\meas(A)$, so that $\meas(A)>0$.
	Since $X$ is a metric space, it is completely regular (i.e.~Tychonoff)
	and, therefore, regular. Consequently, there exists some open neighbourhood
	$U$ of $A$ such that $x\notin U$. In particular, $\meas(U)\geq\meas(A)>0$.
	We can then argue as in the previous case to find a compact $K$ such
	that $\meas(K)>0$ and $x\notin K$. This yields the claim.
\end{proof}

\endappendix

\small
\bibliographystyle{Martin}
\bibliography{bibliography/bibliography.bib}
\end{document}